\newcounter{IssueCounter}
\newtheorem{Issue}[IssueCounter]{Issue}
\def\g{x}
\def\blockX{X}
\def\blockY{Y}
\def\blockV{V}
\def\ad{\mathop{\rm ad}\nolimits}
\newfont{\gothic}{eufm10}
\def\bas {\begin{eqnarray*}}
	\def\eas {\end{eqnarray*}}
\def\N{\mathsf{N}}
\def\M{\mathsf{M}}
\def\H{\mathsf{H}}
\def\Sl{{\mathfrak{sl}_2}}
\def\ba {\begin{eqnarray}}
\def\ea {\end{eqnarray}}
\newtheorem{theorem}{Theorem}[section]
\newtheorem{lemma}[theorem]{Lemma}
\newtheorem{proposition}[theorem]{Proposition}
\newtheorem{corollary}[theorem]{Corollary}
\newtheorem{definition1}[theorem]{Definition}
\newtheorem{remark1}[theorem]{Remark}
\newenvironment{remark}{\begin{remark1}\rm}{\end{remark1}}
\newtheorem{example1}[theorem]{Example}
\def\barray{\begin{eqnarray*}}             \def\earray{\end{eqnarray*}}
\def\beq{\begin{equation}} \def\eeq{\end{equation}}
\title{Nilpotent 
	Feed Forward Network Dynamics}
\author{Fahimeh Mokhtari \thanks{Department of Mathematics, Vrije Universiteit Amsterdam, The Netherlands, {\tt f.mokhtari@vu.nl}.} }
\date{}
\providecommand{\keywords}[1]{\textbf{\textit{Keywords---}} #1}
\begin{document}

\date{}
	\maketitle
		\rule{13cm}{0.03cm} 
	\begin{abstract}
	In this paper, we explore the normal form of fully inhomogeneous feed forward network dynamical systems, characterized by a nilpotent linear component. We introduce a new normal form method, termed the {\em triangular 
	$\Sl$-style}, to categorize this normal form.

 We aim to establish a comprehensive normal form theory, extending to quadratic terms across all dimensions. To accomplish this, we leverage mathematical tools including Hermite reciprocity, transvectants, and insights derived from the well-known $3$-dimensional normal form related to Sylvester's work on generating functions for quadratic covariants.
	
	Furthermore, we formulate the orbital normal form in a general Lie algebraic context as the result of an outer transformation
 and expand it into {\em block-triangular} outer normal form. This extended framework is subsequently employed in both $2$D and $3$D scenarios, leading to noteworthy simplifications that prepare these systems for the study of bifurcations.

	\end{abstract}
\keywords{Feed forward network, Normal form,  Triangular 
	$\Sl$-style,	Invariant theory, 
	Hermite reciprocity, Outer normal forms.}
		\tableofcontents
	\section{Introduction}

 Network dynamical systems are prevalent in various fields, including chemistry, computer science, biology, engineering, neural networks, and more, as evidenced in \cite{subotic2020lyapunov,majdandzic2014spontaneous,hopfield1982neural}. This class of dynamical systems has garnered attention from mathematicians 
\cite{rink2015coupled,rink2013amplified,von2022amplified,golubitsky2012feed,golubitsky2004some,field2004combinatorial,golubitsky2005synchrony,stewart2004networking,stewart2003symmetry,rink2014coupled,golubitsky2006nonlinear,vassena2024structural,vassena2024finding,vassena2024unstable} due to its wide-ranging applications and the need to understand network dynamics. Many works have focused on extending classical concepts to analyze the dynamical behavior of systems with a network structure, encompassing center manifold reduction, normal forms, bifurcation, synchrony, and symmetries.

In this paper, we present a novel study and machinery for the normal form of feed forward networks, one of the first explorations in this direction. Feed forward networks are characterized by loop-free node connections, and cells receive input only from the cells below.  Specifically, we delve into a class of nilpotent feed forward network dynamical systems, aiming to provide deeper insights into their local dynamics and properties.
	
Our focus lies on the following class of nilpotent feed forward network dynamical systems:
 \begin{align}\label{eq:feed}
      \begin{cases} 
{\dot{x}_1}&= X_1={x_{2}}+f_1({x_{1}},{x_{2}},\cdots, x_n),
	\\
{\dot{x}_2}&=X_2={x_{3}}+f_2({x_{2}},\cdots, x_n),
	\\
	&\vdots
	\\
	{\dot{x}_{n-1}}&=X_{n-1}=x_{n}+f_{n-1}(x_{n-1},x_n),
	\\
	{\dot{x}_n}&= X_n=f_n(x_n),
	\end{cases}
 \end{align}
where \(\frac{\partial f_i}{\partial x_j}(0,\cdots,0)=0\) 
 for all \(1 \leq i,j \leq n.\) Here we assume that the feed forward architecture 
	has a single node, and each variable corresponds with a cell.

 To analyze the local behavior of dynamical systems near a singular point at the origin, we employ the nonlinear normal form theory \cite{murdock2006normal,sanders2007averaging,cushman1984normal,qin2022high}. 
 See also \cite{tall2002feedback,brockett1978feedback}, where the authors study the normal form of these systems with control input.

 Our goal in this study is to normalize the network \eqref{eq:feed} at the origin, taking into account its nilpotent linear part. Although there exists a vast theory of nilpotent normal forms \cite{sanders2002spectral,sanders2007averaging,cushman56nilpotent,cushman1984normal,cushman1988normal,murdock2006normal}, its immediate application is hampered by the specific architecture of the network. We will explain this in the motivational example; however, before doing so, we will recall the 
 \(\Sl\)-style of normal form. 

 The $\Sl$-style of normal form for vector fields was introduced by Cushman and Sanders \cite{cushman56nilpotent}, with historical details available in \cite{cushman1990survey}. Consider a given dynamical system $\dot{x}=\N x+f(x)$ with $x\in \mathbb{R}^n$ and nilpotent $\N$. The Jacobson–Morozov theorem guarantees the existence of a nilpotent $\M$ and semi-simple operator $\H$ such that $\langle \N,\H,\M \rangle$ forms a $\Sl$ triple in \(\mathfrak{gl}_n(\mathbb{R})\).
Recall that 
   \(\Sl\)-triple is generated  by \(\langle\N,\H,\M\rangle\)  with the  commutator relations \([\H, \N]=-2\N, 
 [\H, \M]=2 \M,\) and \([\M, \N]= \H\).
The normal form is generated by $\ker{\rm ad} (\M)$, where ${\rm ad}(x)y=[x,y].$ See, \cite{baider1992further,sanders2002spectral,cushman1988normal,cushman1988splitting,cushman56nilpotent}. For the nilpotent normal form classification $2$D and $3$D, see \cite{baider1992further,baider1991unique,gazor2019vector}. Additional results on this topic are available in \cite{gazor2014normal,gazor2013volume}, where the authors classify the simplest normal form for a Hopf-zero singularity in the presence of a nilpotent term in the second-level normal form.

Obtaining the normal form of a given dynamical system involves applying local formal coordinate transformations. Therefore, we must determine the Lie algebraic structure of the transformation generators that preserve the system's architecture. 
It's important to note that the choice of preserving the structure after the normal form or using a larger transformation space without preserving the network structure is subjective. For the network considered here, we aim to preserve the feed forward structure after applying the normal form transformations. 

\noindent{\bf{Motivational example}}
To illustrate this concept in normal form $\Sl$ style, consider the example of a 2D feed forward network:
\begin{equation}
\label{eq:intro1}
\left\{
\begin{array}{l} 
    \dot{x}_{1} = x_{2} + F_1(x_{1}, x_{2}),\\ 
    \dot{x}_{2} = F_2(x_{2}),
\end{array}
\right.
\end{equation}
with  $\frac{\partial }{\partial x_i}F_1({x_{1}},{x_{2}})=0$ and $\frac{\partial }{\partial x_2}F_2({x_{2}})=0$ at  the origin  for $i=1,2.$ The  coordinate transformation should keep the change in ${x_{2}}$ independent of the ${x_{1}}$ coordinate to preserve the feed forward structure.

Consider the following nilpotent system $2$D:\ba\label{eq:Gn2}
	\left\{ \begin{array}{l} \dot {x}_{1} = 
		{x_{2}}+ f_1({x_{1}},{x_{2}}),\\ 
		\dot {x}_{2} =f_2({x_{1}},{x_{2}}),
	\end{array} \right.	\ea
		where \(\frac{\partial f_1}{\partial x_i}({x_{1}},{x_{2}})=\frac{\partial f_2}{\partial x_i}({x_{1}},{x_{2}})=0\) at origin for \(i=1,2\) are nonlinear polynomial vector fields, and \(\N={x_{2}} \frac{\partial}{\partial {x_{1}}}\). 
  One can embed \(\N\) into  an  \(\Sl\)-triple, 
	 \(\langle \N,\H,\M \rangle\)  as
	\(\M={x_{1}} \frac{\partial}{\partial {x_{2}}}\) and \(
	\H={x_{1}} \frac{\partial}{\partial {x_{1}}}-{x_{2}} \frac{\partial}{\partial {x_{2}}}.\)

 Therefore, the general normal form of \eqref{eq:Gn2} is given by 
	\bas
	\left\{ \begin{array}{l} \dot {x}_{1} = 
		{x_{2}}+ {x_{1}}g({x_{1}}),\\ 
		\dot {x}_{2} ={x_{2}}g({x_{1}})+f({x_{1}}),
	\end{array} \right.
	\eas 
 where the nonlinear terms are in \(\ker\ad{\M}\), according to the definition of  $\Sl$-style.
When we look at the feed forward structure, we see that the normal form described by Equation \eqref{eq:intro1} disappears. This means that the usual way of representing the normal form for nilpotent singularities, the
$\Sl$-style does not work for this type of network.  The reason for this is that neither 
$\M$
 nor its kernel fit into the Lie algebra of feed forward networks. We will discuss this in more detail in Section \ref{sec:algorithm}.

Now, the question arises: how can we establish a normal form for these feed forward networks? Surprisingly, we can use the  \(\Sl\)-style for functions (instead of vector fields) to define such a normal form, which we refer to as the triangular \(\Sl\)-style. Specifically, the normal form  in the \(2\)D case, Equation \eqref{eq:intro1},
using this style will be as follows:
 
		\bas
\left\{ \begin{array}{l} \dot {x}_{1} = 
	{x_{2}}+ \bar{f}_1({x_{1}}),\\ 
	\dot {x}_{2} =F_2({x_{2}}).
\end{array} \right.
	\eas

 \noindent{\bf What do we call the triangular 
$\Sl$-style?}
The space of a feed forward network forms a Lie algebra, as demonstrated in Proposition \ref{pro:lie}. This means that the Lie product of two forward vector fields of the feed forward, expressed in \eqref{eq:feed}, remains in this form. In addition to the Lie algebra properties, the Lie bracket of each component of the feed forward network also exhibits feed forward (triangular) architectures. This means that the Lie product of a vector field of the form \eqref{eq:feed} in the \(p\)-th component with its linear part generates terms in both the \(p\)-th and \(p-1\)-th components.

Let us elaborate on this. Denote by $\mathfrak{\g}^p$  the \(p\)-th component of \eqref{eq:feed}. For given $f_i(x_i, \cdots, x_n) \frac{\partial}{\partial x_i} \in \mathfrak{\g}^i$ and $g_j(x_j, \cdots, x_n) \frac{\partial}{\partial x_j} \in \mathfrak{\g}^j$, we obtain
\[
[f_i(x_i, \cdots, x_n) \frac{\partial}{\partial x_i}, g_j(x_j, \cdots, x_n) \frac{\partial}{\partial x_j}]
\subseteq \mathfrak{\g}^{\min(i,j)}.
\]

See Proposition \ref{pro:lie} and Corollary \ref{cor:Lie}. 

To begin, we fix the \(p\)-th component and construct the \(\Sl\) around \(\N_p\), which is a \((n-p+1) \times (n-p+1)\) matrix obtained by eliminating the first \(p-1\) rows and columns of \(\N\). Refer to Figure \ref{fig:inductive} for an illustration. Then, for each component, we find an \(\Sl\)-triple \(\langle \N_{p},\H_{p},\M_{p} \rangle\), as outlined in Theorem \ref{thm:sl2triple}. In every instance, the normal form is defined as those terms belonging to the  \(\ker\mathsf{L}_{\M_{p}}\), where the \(\mathsf{L}\) operator is defined in Theorem \ref{thm:Lop}. In summary, for each component, we can construct a \(\Sl\) triple, and the terms in the  \(\ker\mathsf{L}_{\M_{p}}\) will determine the normal form for that component.

Apart from classifying normal forms up to any order, the triangular \(\Sl\)-style introduces an intriguing property: the normal form of a feed forward network of dimension \(n\) contains the normal form of a feed forward system of dimension \(n-1\). This property is illustrated with examples in Section \ref{subsec:examples}. This means that if we can describe the normal form of a system of \(n-1\) dimensional dimensions, this can be used without changing the description of the normal form of an \(n\) dimensional system with irreducible nilpotent in the normal form of Jordan (apart from renumbering the variables of \(x_1,\cdots,x_{n-1}\) to \(x_2,\cdots,x_n\)).

The next question to be addressed is how to find the  \(\ker \mathsf{L}_{\M_{p}}\). Generally, the classification of invariants can be challenging when the matrix dimension and the degree of the polynomial increase. However, complete  results can be found in \cite[Chapter 12]{sanders2007averaging} for low-dimensional irreducible cases from dimension \(2\) to \(5\). In this paper, we determine the quadratic kernel of \(\mathsf{L}_{\M_{p}}\) for every \(n \in \mathbb{N}\) by computing the transvectant of \({x_{1}}\) (which is the trivial kernel for this operator; see Equation \eqref{eq:inductivesl2}), as described in Section \ref{sec:inv}. This provides an explicit quadratic normal form for any dimension of the feed-forward network, sufficient for local studies of systems near steady-state solutions; see Theorem \ref{thm:ndnf}.

Furthermore, we determine the general normal form in the dimensions $2$ and $3$ (refer to Sections \ref{sec:2dorbital}--\ref{sec:3dorbital}). These general forms serve as the outer normal form, which will be discussed in the following paragraph.

Besides the method to find the normal form for feed forward networks, we propose an approach that utilizes Hermite reciprocity to test the quadratic normal form, Section \ref{SEC:NFAPP}. 

This method
allows us to apply the Cushman-Sanders test \cite{cushman56nilpotent}. By employing Hermite reciprocity, which establishes the isomorphism between the two representations \(S^m S^n\mathbb{C}^2\) and \(S^n S^m\mathbb{C}^2\) of \(\bf{GL(2)}\) \cite{Goldstein2011}, 
applied to the generating functions of covariants, we achieve the desired results. In addition, references \cite[Theorem 6.31]{olver1987invariant} and \cite{sturmfels2008algorithms,hermite1854} provide further insight into this technique. Notably, this is the first usage of Hermite reciprocity in the field of normal form theory as far as we know.

 \noindent{\bf How do we define the  {Outer normal form?}}

Another method that we have developed here is the {\em{Outer normal form}}, which is a form of orbital normal form. In the orbital normal form method, in addition to employing coordinate transformations, functions can be utilized as transformations, which allows for further reduction, \cite{gazor2013normal, bogdanov1979local, bogdanov1976local}.
For example, consider the orbital normal form of the Eulerian family of Hopf-zero singularities, where most terms, except for some leading terms, are removed \cite{gazor2013normal}.

    In this paper, we present a Lie algebraic formulation of orbital normal form theory, defining an extension of the transformations and demonstrating that there is a Lie algebraic representation of these transformations in the vector fields (refer to Theorem \ref{thm:oop}). Notably, this is the first time the orbital normal form is formulated in a Lie algebraic context, as far as we know. 
Furthermore, we extend the classical orbital normal form by combining it with the right-left equivariant and strongly equivariant methods \cite{golubitsky2012singularities}. The result introduces an outer normal form, applicable beyond feed forward network dynamical systems. However, in this paper, we focus exclusively on its application to the specific problem at hand.
We apply the outer normal form to \(2\)D and \(3\)D feed forward networks, eliminating almost all higher-order terms, except for quadratic terms.
The author believes that this method would work for the higher-order dimension.  Since it involves heavy computations we would leave it in this paper.
Moreover, we show that in general the quadratic terms can be reduced to \(x_i^2\frac{\partial}{\partial x_i}, i=1,\ldots,n\), see Theorem \ref{thm:ndonf}.

\begin{theorem}[Outer unique normal form]\label{thm:ONF3} 
For the {\em $3$D} feed forward network described by the following system near the triple zero singular fixed point {\em (}that is, without versal parameters in the linear part {\em )}:
 \begin{align*}
      \begin{cases} 
		\dot{x}_1 &= {x_{2}} + F_1({x_{1}},{x_{2}},{x_{3}}), \\
		\dot{x}_2 &= {x_{3}} +F_2({x_{2}},{x_{3}}), \\
		\dot{x}_3 &= F_3({x_{3}}),
	  \end{cases}
  \end{align*}
 there exists  a formal invertible feed forward transformation that brings it to the following outer  normal form:
 	\begin{align}\label{eq:3dl1}
	\begin{cases} 
		\dot {x}_{1} &= {x_{2}} + a_2 x_1^2, \\
		\dot {x}_{2} &= {x_{3}} + b_2x_{2}^2, \\
		\dot {x}_{3} &= c_2x_3^2,
	\end{cases}
\end{align}
assuming the coefficients \(a_2,b_2\) and \(c_2\) are invertible. 
\end{theorem}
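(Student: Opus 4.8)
\emph{Strategy.} The plan is to normalize the three rows one at a time, from the bottom upward, exploiting the triangular Lie-algebra structure of Proposition~\ref{pro:lie} and Corollary~\ref{cor:Lie} so that the reduction of an upper row never disturbs an already-normalized lower row. The key observation is that a near-identity coordinate change generated by a field supported in the $p$-th component, $g_p(x_p,\dots,x_n)\frac{\partial}{\partial x_p}\in\mathfrak{x}^p$, modifies only the rows $1,\dots,p$: since $[\mathfrak{x}^p,\mathfrak{x}^j]\subseteq\mathfrak{x}^{\min(p,j)}$, bracketing such a generator against the full field lands in $\mathfrak{x}^1+\dots+\mathfrak{x}^p$ and leaves the rows $x_{p+1},\dots,x_n$ frozen. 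I would therefore treat the rows in the order $\dot{x}_3,\dot{x}_2,\dot{x}_1$, and require the outer (orbital and equivariant) transformations to be taken in the block-triangular form developed earlier in the paper, so that they too respect this freezing.

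\emph{Inner reduction, bottom-up.} For the last row $\dot{x}_3=F_3(x_3)=c_2x_3^2+O(x_3^3)$ (the linear part vanishes by nilpotency), a one-dimensional orbital factor $\rho(x_3)$ with $\rho(0)=1$ solving $\rho F_3=c_2x_3^2$ exists formally precisely because $c_2$ is invertible, giving $\dot{x}_3=c_2x_3^2$; this is the scalar case of Theorem~\ref{thm:ndonf}. The subsystem $(x_2,x_3)$ is then, after renumbering, exactly the $2$D feed-forward problem of Section~\ref{sec:2dorbital}, whose outer normal form uses only $x_2,x_3$-transformations and hence lifts to block-triangular transformations of the $3$D system; invoking it yields $\dot{x}_2=x_3+b_2x_2^2$ under invertibility of $b_2$, with the third row untouched. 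It remains to treat $\dot{x}_1=x_2+F_1(x_1,x_2,x_3)$. Here I build the triangular $\Sl$-triple $\langle\N_1,\H_1,\M_1\rangle$ around the full $3\times3$ nilpotent $\N_1$ (Theorem~\ref{thm:sl2triple}) and use row-$1$ generators to push $F_1$ into $\ker\mathsf{L}_{\M_1}$ (Theorem~\ref{thm:Lop}); by Theorem~\ref{thm:ndnf} this leaves an explicit quadratic representative, computed as a transvectant of $x_1$, together with higher-order kernel terms.

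\emph{Outer reduction.} The remaining work is to remove the cubic-and-higher kernel terms and to collapse the quadratic representative of each row to a single $x_i^2$. This is where the outer normal form enters: using the Lie-algebraic representation of the extended (orbital plus equivariant) transformations from Theorem~\ref{thm:oop}, applied in block-triangular form row by row, I would solve the enlarged homological equation and obtain, by Theorem~\ref{thm:ndonf}, the reduction of each quadratic kernel to $a_2x_1^2$, $b_2x_2^2$, $c_2x_3^2$ while eliminating all higher degrees. Because the outer transformations are block-triangular, the row-$1$ outer step still leaves rows two and three in their final form.

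\emph{Main obstacle.} The real difficulty is proving surjectivity of the enlarged (outer, block-triangular) homological operator: at every degree $d\ge 3$ its image must be the whole homogeneous layer, and at $d=2$ everything except $\langle x_i^2\frac{\partial}{\partial x_i}\rangle$. The invertibility hypotheses on $a_2,b_2,c_2$ are exactly the nondegeneracy (as in the saddle-node and cusp) that makes the orbital part of this operator---multiplication by the leading quadratic---invertible order by order; without them the higher layers would not be reachable. I would establish the required dimension counts, in particular that the surviving quadratic invariant is one-dimensional, by the Cushman--Sanders test via Hermite reciprocity as set up in Section~\ref{SEC:NFAPP}. A secondary point to check is that the orbital rescaling is compatible with the off-diagonal linear entries $+x_2,+x_3$: a naive time reparametrization would corrupt them, and it is the combination with the equivariant method that repairs this, so I must verify that the coupling terms of $F_1$ in $x_2,x_3$ are absorbed without reintroducing structure into the lower rows---which the triangular bracket relation guarantees.
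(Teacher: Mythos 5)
Your proposal is correct and follows essentially the same route as the paper: inner reduction via the triangular $\Sl$-style (so that row one is a series in $x_1$ and $\Delta$, rows two and three in $x_2$, $x_3$ respectively), reuse of the $2$D outer normal form for the lower rows, and then elimination of the remaining $\Delta$-terms and of $x_1^l$, $l>2$, by the block-triangular $\rho$-action, with invertibility of $a_2,b_2,c_2$ entering exactly where you place it (division by the leading quadratic coefficient in the outer homological equation). The only difference is organizational — you interleave inner and outer steps row by row bottom-up, whereas the paper performs the full inner normalization first (its Lemma~\ref{thm:tri3}) and then the outer reduction bottom-up — which the same triangular bracket relation makes equivalent.
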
 

	\noindent
	{\bf Organization of paper}: 
 
We begin, in section \ref{sec:preli}, with the preliminaries, providing an overview of normal form theory and the associated challenges in determining their normal forms using $\Sl$ representations.

In section \ref{sec:algorithm}, we introduce the concept of the triangular $\Sl$-style normal form. This is a style for systematically classifying normal forms in feed forward networks. In section \ref{sec:inv} we cover the classification of quadratic normal forms and validate these normal forms in this section.

Section \ref{sec:3d} provides clear insight through a set of examples that illustrate key concepts. We explore the computational details of the quadratic normal form, with a specific focus on the $3$D scenario. In addition, we present a range of examples that span dimensions from $2$D to $8$D, demonstrating the theory developed.

In section \ref{sec:orbital}, we introduce the method of outer normal form and specifically apply this method to $2$D and $3$D feed forward networks.

	\section{Preliminaries on normal form}\label{sec:preli}
In this section, we aim to provide some preliminary information regarding normal form. Let's consider the following dynamical system:
\[
\dot{x} =  {\mathsf A}x + f(x),\quad x \in \mathbb{R}^n,
\]
where ${\mathsf A}$ is the linear part and $f(x)$ is the nonlinear vector field of the system.

We rewrite this in Lie algebraic terms so that $v^{[0]} = \sum_{i=0}^{\infty} v_i^{[0]}$ is given, where $v_i \in \mathcal{V}_i$ are the vector fields with grade $i$. 

In the first-level normal form, the grade is the \(x\)-degree minus one. In this paper, we use the standard notation $\delta^{(1)}$ for the grading of vector fields in the first-level normal form study. That is, \(\delta^{(1)}v_i=i\).

For the second level we use $\delta^{(2)}$, see Sections \ref{sec:2dorbital} and \ref{sec:3dorbital}. Now, define
\bas
&\mathcal{N}_i: \mathcal{V}_i \rightarrow \mathcal{V}_i&
 \\
 \,\,\,\,\,\,\,\qquad &g_i  \mapsto  [v_0^{[0]}, g_i],&
 \eas
where $[x,y] = {\mathsf{ad}}_x(y) := xy-yx$. 

Then there exists a complementary  space $\mathcal{C}_i$ such that $\mathcal{V}_i = \text{im}(\mathcal{N}_i) \oplus \mathcal{C}_i$.

Now, we define the normal form as
$
v^{[1]} = \sum_{i=0}^{\infty} w_i^{[1]},
$
where $w_i \in \mathcal{C}_i$, see \cite{sanders2002spectral, sanders2007averaging, murdock2006normal}. 
Given \(v^{[0]}_1\), we determine \(w_1^{[1]}\) and \(g^{[1]}_1\) by the decomposition: \(v^{[0]}_1=w_1^{[1]}+\mathcal{N}_1 g^{[1]}_1,
w_1^{[1]}\in\mathcal{C}_1\).
For instance, to find the normal form of the \(v_1\) we apply the transformation \(g_1\) to the vector field \(v^{[0]}\) 
and compute modulo grade \(2\) terms:
\ba
\nonumber
\exp({\mathsf{ad}}_{g_1^{[1]}})v^{[0]}&=&v^{[0]}_0+v^{[0]}_1-[v^{[0]}_0,g_1^{[1]}]+\cdots
\nonumber
\\&=&v^{[0]}_0+v^{[0]}_1-\mathcal{N}_1 g^{[1]}_1+\cdots
\nonumber
\\&=&v^{[0]}_0+w_1^{[1]}+\cdots
\label{eq:nfcomp}
\ea
We say that \(w^{[1]}_1\) is the normal form of the first order (subscript) with respect to the linear part \(\mathsf{A}\), that is, at first level (superscript).

Note that the choice of \(\mathcal{C}_i\) will determine the normal form style.

For example, in the
$\Sl$ style, as described in the Introduction, terms are in the first-level normal form if they are in $\ker{\rm ad} (\M).$
In this paper, we introduce the triangular \(\Sl\)-style for feed forward networks.

 To derive the unique normal form, it is essential to first obtain the first-level normal form. Then identify the leading nonlinear terms from this form and define new grades accordingly. This process allows for further reduction through additional transformations applied to the first-level normal form.
\begin{remark}
In this paper, we follow the theoretical normal form approach for singularity and utilize the unique second level normal form. It is noteworthy that we do not have a predefined method for obtaining the unique normal form; however, one can refer to Sanders and Baider's work for guidance, \cite{baider1991unique,baider1992further}. The approach should be tailored to suit the specific problems at hand. We provide a comprehensive explanation, outlining the step-by-step process to determine the unique normal form for both 
\(2\)D and \(3\)D.
\end{remark}
\section{Triangular \(\Sl\)-style}\label{sec:algorithm}

 	This section provides the main theorems on the normal form of a feed forward network with a single node. First, we specify the configuration space for our dynamical system 
	\ba\label{eq:space} 
\mathfrak{r}^i&:=&\mathbb{R}[[x_i,\cdots, x_n]],
	\\\label{eq:space2}
	\mathfrak{t}^n&:=&
	\oplus_{i=1}^{n}\mathfrak{r}^i\otimes \langle \frac{\partial}{\partial x_i}\rangle.
	\ea

In \(\mathfrak{t}^n\), we look at the vector fields with irreducible nilpotent part, which have the form 
	
  \begin{align}\label{eq:eqs}
      \begin{cases} 
\dot{x}_1&={x_{2}}+f_1({x_{1}},{x_{2}},\cdots, x_n),
	\\
\dot{x}_2&={x_{3}}+f_2({x_{2}},\cdots, x_n),
	\\
	&\vdots
	\\
	\dot{x}_{n-1}&= x_{n}+f_{n-1}(x_{n-1},x_n),
	\\
	\dot{x}_n&= f_n(x_n),
	\end{cases}
 \end{align}

	where \(f_i(x_i,..,x_{n})\) is the nonlinear polynomial.
	We shall define the following notations:
	\begin{itemize}

		\item 
		$\mathfrak{\g}^p:$ The \(p\)-th component of 
		$\mathfrak{t}$
		\ba\label{eq:l}
		\mathfrak{\g}^p&=&\langle f_p(x_p,x_{p+1},\cdots,x_n)\frac{\partial}{\partial x_p}\rangle.
		\ea
	
  Note that $ \mathfrak{\g}^p= \mathfrak{r}^p\frac{\partial}{\partial x_p}.$
		\item  Define the filtered vector spaces  $0\subset\mathfrak{t}^1\subset\cdots\subset\mathfrak{t}^{n-1}\subset\mathfrak{t}^n=\mathfrak{t}$ by
		\bas
		\mathfrak{t}^p&=&\oplus_{i=1}^{p}\mathfrak{\g}^{i}.
		\eas
				\item
		$\N:=\sum_{i=1}^{n-1} x_{i+1}\frac{\partial}{\partial x_i}\in \mathfrak{t}^{n-1},
		\,\,\, \mbox{for all } n \ge 2$.

  \item $\mathsf{G}^p:= \mathfrak{t}/{\mathfrak{t}^{p-1}}.$ 
	\end{itemize}
	\begin{proposition}\label{pro:lie}
		The following holds. 
		\bas
		{[\mathfrak{t}^p,\mathfrak{t}^q]} \subseteq \mathfrak{t}^{\min(p,q)}.
		\eas
	\end{proposition}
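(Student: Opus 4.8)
The plan is to reduce the inclusion to a single componentwise bracket identity and then recover the general case by bilinearity. Since $\mathfrak{t}^p=\oplus_{i=1}^{p}\mathfrak{\g}^{i}$ and $\mathfrak{t}^q=\oplus_{j=1}^{q}\mathfrak{\g}^{j}$, and the commutator of vector fields is bilinear, it is enough to establish the sharper statement
\[
[\mathfrak{\g}^i,\mathfrak{\g}^j]\subseteq \mathfrak{\g}^{\min(i,j)}\qquad\text{for all }1\le i,j\le n,
\]
which is the inclusion recorded in the Introduction (cf.\ Corollary~\ref{cor:Lie}). Granting it, and using that the bracket is antisymmetric while $\min$ is symmetric, I may assume $p\le q$. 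For arbitrary $X\in\mathfrak{t}^p$ and $Y\in\mathfrak{t}^q$ I write $X=\sum_{i=1}^{p}X_i$ and $Y=\sum_{j=1}^{q}Y_j$ with $X_i\in\mathfrak{\g}^i$ and $Y_j\in\mathfrak{\g}^j$, so that $[X,Y]=\sum_{i\le p,\,j\le q}[X_i,Y_j]$. Each summand lies in $\mathfrak{\g}^{\min(i,j)}$, and since $\min(i,j)\le i\le p$ this component is one of the direct summands of $\mathfrak{t}^p=\mathfrak{t}^{\min(p,q)}$; hence the whole sum lies in $\mathfrak{t}^{\min(p,q)}$.

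For the componentwise identity I would compute the bracket directly. Writing $X=f\,\frac{\partial}{\partial x_i}$ with $f\in\mathfrak{r}^i$ and $Y=g\,\frac{\partial}{\partial x_j}$ with $g\in\mathfrak{r}^j$, the standard formula for the commutator of vector fields gives
\[
[X,Y]=f\,\frac{\partial g}{\partial x_i}\,\frac{\partial}{\partial x_j}-g\,\frac{\partial f}{\partial x_j}\,\frac{\partial}{\partial x_i}.
\]
Assuming $i\le j$ (again by antisymmetry), the decisive point is the triangular support of the rings: every $g\in\mathfrak{r}^j=\mathbb{R}[[x_j,\dots,x_n]]$ is independent of $x_1,\dots,x_{j-1}$, so $\frac{\partial g}{\partial x_i}=0$ whenever $i<j$ and the first term vanishes, leaving $-g\,\frac{\partial f}{\partial x_j}\,\frac{\partial}{\partial x_i}$, which points in the $x_{\min(i,j)}=x_i$ direction. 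When $i=j$ both terms survive but both point in the $x_i$ direction, so the conclusion is unchanged. It then remains to verify that the surviving coefficient lies in $\mathfrak{r}^{\min(i,j)}=\mathfrak{r}^i$: differentiation preserves $\mathfrak{r}^i$, so $\frac{\partial f}{\partial x_j}\in\mathfrak{r}^i$, and the nesting $\mathfrak{r}^j\subseteq\mathfrak{r}^i$ (valid because $i\le j$) gives $g\in\mathfrak{r}^i$; hence $g\,\frac{\partial f}{\partial x_j}\in\mathfrak{r}^i$ and $[X,Y]\in\mathfrak{r}^i\,\frac{\partial}{\partial x_i}=\mathfrak{\g}^i=\mathfrak{\g}^{\min(i,j)}$.

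There is no genuine analytic difficulty here; the entire content is the \emph{triangular} support property of the configuration rings $\mathfrak{r}^i$ together with the nesting $\mathfrak{r}^{i+1}\subseteq\mathfrak{r}^i$. The only step requiring care is the simultaneous bookkeeping of two indices for each surviving term — the direction $\frac{\partial}{\partial x_k}$ in which it points and the ring in which its coefficient lives — and making sure the boundary case $i=j$ (where no term drops out) is handled alongside $i<j$. Once the componentwise inclusion is in place the reduction by bilinearity is immediate, and this same bracket computation is precisely what is needed for Corollary~\ref{cor:Lie} and for the observation in the Introduction that bracketing the $p$-th component with the linear part $\N$ produces terms only in components $p$ and $p-1$.
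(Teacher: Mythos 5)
Your proposal is correct and follows essentially the same route as the paper's proof: reduce to the componentwise inclusion $[\mathfrak{g}^i,\mathfrak{g}^j]\subseteq\mathfrak{g}^{\min(i,j)}$ by bilinearity, then compute the bracket of $f\,\frac{\partial}{\partial x_i}$ and $g\,\frac{\partial}{\partial x_j}$ directly and use the triangular support of the rings $\mathfrak{r}^i$. In fact you spell out more explicitly than the paper does why the term $f\,\frac{\partial g}{\partial x_i}\,\frac{\partial}{\partial x_j}$ vanishes for $i<j$ and why the surviving coefficient lies in $\mathfrak{r}^{\min(i,j)}$, which is exactly the content the paper leaves implicit in its final inclusion.
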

	\begin{proof}
		Following the  definition of \(\mathfrak{t}^p\) we find
		\bas
		[\oplus_{i=1}^{p} \mathfrak{\g}^i,\oplus_{j=1}^{q} \mathfrak{\g}^j]=\oplus_{i=1}^{p}\oplus_{j=1}^{q}[\mathfrak{\g}^i,\mathfrak{\g}^j],
		\eas
		on the other hand  for given  $f_i(x_i,\cdots,x_n) \frac{\partial}{\partial x_i}\in  \mathfrak{\g}^i $  and  $g_j(x_j,\cdots,x_n) \frac{\partial}{\partial x_j}\in  \mathfrak{\g}^j $  we obtain
		\bas
		[f_i(x_i,\cdots,x_n)\frac{\partial}{\partial x_i}, g_j(x_j,\cdots,x_n)\frac{\partial}{\partial x_j}]&=&
		f_i(x_i,\cdots,x_n) \frac{\partial}{\partial x_i}
		(g_j(x_j,\cdots,x_n))\frac{\partial}{\partial x_j}
  \\&&
		-g_j(x_j,\cdots,x_n)\frac{\partial}{\partial x_j}(f_i(x_i,\cdots,x_n))\frac{\partial}{\partial x_i}
		\\
		&\subseteq&\mathfrak{\g}^{\min(i,j)}.
		\eas
  Hence, 
  $[\mathfrak{t}^p,\mathfrak{t}^q]\subseteq\oplus_{i=1}^{p}\oplus_{j=1}^{q}\mathfrak{\g}^{\min(i,j)}\subseteq\mathfrak{t}^{\min(p,q)}$.
		
	\end{proof}

	\begin{corollary}\label{cor:Lie}
		\bas 
		[\N,{\mathfrak{t}}^{q}] \subseteq {\mathfrak{t}}^{\min(q,n-1)},
		\eas
		therefore,
		\begin{equation}
		\left\{
		\begin{aligned}
		{\rm ad}_{\N}({\mathfrak{t}}^q) & \subseteq{\mathfrak{t}}^{n-1}\,\,\,\, \mbox{if}\,\,\,\, q=n,\\
		{\rm ad}_{\N}({\mathfrak{t}}^q)  & \subseteq {\mathfrak{t}}^q\,\,\,\,\,\,\,\,\,\, \mbox{if}\,\, q<n.
		\end{aligned} \right.
		\end{equation}
	\end{corollary}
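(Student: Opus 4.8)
The plan is to obtain the corollary directly from Proposition \ref{pro:lie}, so that no fresh bracket computation with vector fields is needed. The one observation that does all the work is already recorded in the definition of the nilpotent linear part: $\N \in \mathfrak{t}^{n-1}$, i.e. $\N$ lives in the second-to-top filtration layer rather than genuinely in the top one. From $\langle \N\rangle \subseteq \mathfrak{t}^{n-1}$ and bracketing against an arbitrary element of $\mathfrak{t}^q$ one gets
\[
[\N,\mathfrak{t}^q] \subseteq [\mathfrak{t}^{n-1},\mathfrak{t}^q] \subseteq \mathfrak{t}^{\min(n-1,q)},
\]
where the final inclusion is exactly Proposition \ref{pro:lie} with $p=n-1$. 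Since $\min(n-1,q)=\min(q,n-1)$, this is precisely the first displayed claim of the corollary.

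The second part is then pure arithmetic on the $\min$, split into two cases. First I would take $q=n$: here $\min(q,n-1)=n-1$, and because $\mathfrak{t}^n=\mathfrak{t}$ this gives $\mathrm{ad}_\N(\mathfrak{t}^n)\subseteq\mathfrak{t}^{n-1}$. Next, for $q<n$ one has $q\le n-1$, so $\min(q,n-1)=q$ and hence $\mathrm{ad}_\N(\mathfrak{t}^q)\subseteq\mathfrak{t}^q$. Spelling out these two cases reproduces the braced statement verbatim. As an optional cross-check one could instead argue directly, mirroring the proof of Proposition \ref{pro:lie}: for $g_q(x_q,\dots,x_n)\frac{\partial}{\partial x_q}\in\mathfrak{\g}^q$, expanding $[\N,\,g_q\frac{\partial}{\partial x_q}]$ term by term against $\N=\sum_{i=1}^{n-1}x_{i+1}\frac{\partial}{\partial x_i}$ shows each summand lands in $\mathfrak{\g}^{\min(i,q)}$ with $i\le n-1$; but citing the already-proved proposition is cleaner.

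There is no genuine obstacle here, so rather than a hard step I would only flag the conceptual point that explains why the statement is phrased with the case split. The content is the \emph{asymmetry}: $\mathrm{ad}_\N$ strictly lowers the filtration degree on the top layer ($\mathfrak{t}^n\to\mathfrak{t}^{n-1}$) while preserving every lower layer $\mathfrak{t}^q$ for $q<n$, and this happens precisely because $\N$ sits one level below the top. I would make sure the write-up isolates $\N\in\mathfrak{t}^{n-1}$ as the operative hypothesis, since this same feature is what later makes $\N$ behave well on the quotients $\mathsf{G}^p=\mathfrak{t}/\mathfrak{t}^{p-1}$ and underpins the inductive $\Sl$-triple construction.
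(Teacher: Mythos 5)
Your proposal is correct and matches the paper's intended derivation: the corollary is stated without an explicit proof precisely because it follows immediately from Proposition \ref{pro:lie} together with the fact, recorded in the definitions, that $\N=\sum_{i=1}^{n-1}x_{i+1}\frac{\partial}{\partial x_i}\in\mathfrak{t}^{n-1}$, followed by the same case split on $\min(q,n-1)$. Your closing remark correctly isolates $\N\in\mathfrak{t}^{n-1}$ as the operative point, which is exactly the feature the paper exploits afterwards.
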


	\begin{theorem}\label{thm:Lop}
		${\rm{ad}}_{\N}$ maps $\mathsf{G}^p $ to $\mathsf{G}^p$ as follows: Let \(\sum_{q=p}^n g_q(x_q,\cdots,x_n) \otimes \frac{\partial}{\partial x_q}\) be a transformation generator leaving \(\mathfrak{t}^{p+1}\) invariant. In particular, this requires \(\mathsf{L}_{\N} g_{q}=g_{q+1}\) for \(q=p+1,\cdots,n-1\) and \( g_{n}=0\). This in turn tells us that 
  \(g_q=x_q k_{q+1}+h_{q+1}, k_{q+1}\in \ker \mathsf{L}_{\N}\), and \(g_{q+1}=x_{q+1} k_{q+1}+\mathsf{L}_{\N} h_{q+1}\).
  Therefore, \(g_q\) is built from the monomials \(x_q^{m_1} x_{q+1}^{m_2} \cdots x_n^{m_{n-q+1}}\), with \(m_i\leq i\) for \(i<n-q+1\).

  Then
		\bas 
  {\rm{ad}}_{\N}(\sum_{q=p}^n g_q(x_q,\cdots,x_n) \otimes \frac{\partial}{\partial x_q})=(\mathsf{L}_{\N}\left(g_p(x_p,\cdots,x_n)\right)
  -g_{p+1}(x_{p+1},\cdots,x_n) )\otimes \frac{\partial}{\partial x_p}\mod{\mathfrak{t}^{p-1}},
  \eas
		where $\mathsf{L}_{\N}$ is an operator that  act on the  $\mathfrak{r}^p$ by 
		\bas 
		\mathsf{L}_{\N}&:& {\mathfrak{r}^p}  \rightarrow  {\mathfrak{r}^p}  
		\\
		\mathsf{L}_{\N}(g_p(x_p,x_{p+1},\cdots,x_n))&=&\sum_{i=p}^{n}  x_{i+1}
		\frac{\partial}{\partial x_i}g_p(x_p,x_{p+1},\cdots,x_n).
		\eas
	\end{theorem}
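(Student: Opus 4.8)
The plan is to derive every assertion from a single coordinatewise computation of the Lie bracket $[\N,G]$ and then read off the pieces one at a time. First I would settle the opening claim that ${\rm ad}_{\N}$ induces an endomorphism of $\mathsf{G}^p=\mathfrak{t}/\mathfrak{t}^{p-1}$: Corollary \ref{cor:Lie} gives $[\N,\mathfrak{t}^{p-1}]\subseteq\mathfrak{t}^{\min(p-1,n-1)}\subseteq\mathfrak{t}^{p-1}$, so ${\rm ad}_{\N}$ preserves the filtration and descends to each quotient.

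For the explicit formula I would take a lift $G=\sum_{q=p}^{n}g_q\frac{\partial}{\partial x_q}$ with $g_q\in\mathfrak{r}^q$ and expand $[\N,G]$ using the coordinate formula for the bracket of two vector fields. With $\N=\sum_{i=1}^{n-1}x_{i+1}\frac{\partial}{\partial x_i}$, the $\frac{\partial}{\partial x_j}$-component of $[\N,G]$ is $\N(g_j)-g_{j+1}$: the first term is $\N$ differentiating the coefficient $g_j$, the second is $G$ differentiating the coefficient $x_{j+1}$ of $\N$ (with the conventions $g_{n+1}=0$ and $g_j=0$ for $j<p$). The decisive simplification is that $g_j$ depends only on $x_j,\dots,x_n$, so every summand $x_{i+1}\frac{\partial}{\partial x_i}g_j$ with $i<j$ vanishes and $\N(g_j)$ collapses to $\mathsf{L}_{\N}(g_j)$. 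The same observation shows $\mathsf{L}_{\N}$ maps $\mathfrak{r}^p$ into itself, since each surviving term $x_{i+1}\frac{\partial}{\partial x_i}g_p$ with $i\ge p$ again involves only $x_p,\dots,x_n$; this is exactly what makes the displayed operator $\mathsf{L}_{\N}\colon\mathfrak{r}^p\to\mathfrak{r}^p$ well defined.

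Assembling the components yields
\[
[\N,G]\;\equiv\;\sum_{j=p}^{n}\bigl(\mathsf{L}_{\N}g_j-g_{j+1}\bigr)\frac{\partial}{\partial x_j}\pmod{\mathfrak{t}^{p-1}},
\]
the only discarded term being the $\frac{\partial}{\partial x_{p-1}}$-component $-g_p\frac{\partial}{\partial x_{p-1}}\in\mathfrak{\g}^{p-1}\subseteq\mathfrak{t}^{p-1}$. The condition that $G$ not alter the higher components — expressed in the theorem as $\mathsf{L}_{\N}g_q=g_{q+1}$ for $q=p+1,\dots,n-1$ together with $g_n=0$ — makes every $\frac{\partial}{\partial x_j}$-term with $j>p$ vanish, leaving only the $\frac{\partial}{\partial x_p}$-component with value $\mathsf{L}_{\N}g_p-g_{p+1}$, which is the asserted formula. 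The structural description $g_q=x_qk_{q+1}+h_{q+1}$ with $k_{q+1}\in\ker\mathsf{L}_{\N}$ I would then obtain by solving the chain relation $\mathsf{L}_{\N}g_q=g_{q+1}$ through the Leibniz identity $\mathsf{L}_{\N}(x_qu)=x_{q+1}u+x_q\mathsf{L}_{\N}u$ and $\mathsf{L}_{\N}x_q=x_{q+1}$, which simultaneously gives $g_{q+1}=x_{q+1}k_{q+1}+\mathsf{L}_{\N}h_{q+1}$.

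The bracket expansion itself is routine; the care lies in the index bookkeeping at the two ends of the range — the $j=p-1$ term that must be absorbed into $\mathfrak{t}^{p-1}$, and the top index $j=n$, where $x_{n+1}=0$ trivializes the corresponding relation. The point I expect to be the real obstacle is the final monomial bound $m_i\le i$ for $i<n-q+1$: the factor $x_q$ enters only linearly through $x_qk_{q+1}$, but controlling the exponents of $x_{q+1},\dots,x_{n-1}$ is not a formal consequence of the bracket computation and instead reflects the highest-weight structure of $\ker\mathsf{L}_{\N}$ as an $\Sl$-module. I would establish it by downward induction on the component index $q$, peeling off one factor of $x_q$ at each step via the splitting $g_q=x_qk_{q+1}+h_{q+1}$ and invoking the invariant-theoretic description of $\ker\mathsf{L}_{\N}$ developed in the later sections.
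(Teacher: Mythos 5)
Your proposal is correct and follows essentially the same route as the paper: the paper's proof consists of exactly the coordinatewise expansion of ${\rm ad}_{\N}\sum_{q=p}^n g_q\,\frac{\partial}{\partial x_q}$ you describe, identifying the $\frac{\partial}{\partial x_p}$-component as $\mathsf{L}_{\N}(g_p)-g_{p+1}$ and discarding the term $-g_p\,\frac{\partial}{\partial x_{p-1}}$ modulo $\mathfrak{t}^{p-1}$. The supporting bookkeeping you supply --- well-definedness on $\mathsf{G}^p$ via Corollary \ref{cor:Lie}, the vanishing of the components with index $j>p$ from the chain relation $\mathsf{L}_{\N}g_q=g_{q+1}$ with $g_n=0$, and the Leibniz splitting $g_q=x_qk_{q+1}+h_{q+1}$ --- is left implicit in the paper, and the monomial bound $m_i\leq i$ that you rightly single out as the only nontrivial structural claim is likewise asserted without proof there, so your proposed induction goes beyond what the paper itself records.
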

	\begin{proof}
		The proof follows from the argument
		
		\bas
		{\rm ad}_{\N}\sum_{q=p}^n g_q(x_q,x_{q+1},\cdots,x_n)
		\frac{\partial}{\partial x_q}&=&
		\sum_{i=p}^{n-1}  x_{i+1}
		\frac{\partial}{\partial x_i}(g_p(x_p,x_{p+1},\cdots,x_n))
		\frac{\partial}{\partial x_p}
		- g_p(x_p,x_{p+1},\cdots,x_n)\frac{\partial}{\partial x_{p-1}}
		\\&&- g_{p+1}(x_{p+1},\cdots,x_n)\frac{\partial}{\partial x_{p}}
  \\
		&=&
		\mathsf{L}_{\N}\left(g_p(x_p,x_{p+1},\cdots,x_n) \right)- g_{p+1}(x_{p+1},\cdots,x_n)\frac{\partial}{\partial x_p}
		\,\,\,\,\, \mbox{mod}\,\, \mathfrak{t}^{p-1}.
		\eas
	\end{proof}
	With the Lie algebraic structure established and a thorough examination of the Lie product within the feed forward network completed, we are now prepared to define the triangular $\Sl$-style normal form. This algorithm works deductively as follows: For a given network with dimension $n,$ construct $\Sl$ near the nilpotent linear part $\N$ using the Jacobson-Morosov theorem \cite{knapp1996lie}. Before introducing the algorithm, we present the constant normal form.

	\begin{lemma}[Constant normal form]\label{lem:Cons}
		Here we have the constant vector:
		\bas 
		\sum_{i=1}^n \nu_i \frac{\partial }{\partial x_{i}},
		\eas
		where \( \nu_i\) are constants. Now, we can write it as \(\nu_j \frac{\partial }{\partial x_{j}}=[ \N, -\nu_j \frac{\partial }{\partial x_{j+1}}],\)  for all \(1\leq j \leq n-1.\) Therefore, only the constant term in form 
		\(\nu_n\frac{\partial }{\partial x_{n}}\) remain in the zero degree terms  of normal form {\em(}complement{\em )}.
	\end{lemma}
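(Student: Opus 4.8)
The plan is to regard the constant (degree-zero in $x$) vector fields as the lowest homogeneous component of $\mathfrak{t}$ on which the homological operator ${\rm ad}_{\N}$ acts, and then to read off the surviving terms directly from the splitting $\mathcal{V} = \text{im}(\mathcal{N}) \oplus \mathcal{C}$ with $\mathcal{N} = {\rm ad}_{\N}$ recalled in Section \ref{sec:preli}. The constant fields are exactly the span of $\tfrac{\partial}{\partial x_1},\dots,\tfrac{\partial}{\partial x_n}$, and each of these is a legitimate feed-forward generator: since $1 \in \mathfrak{r}^i$ we have $\tfrac{\partial}{\partial x_i} \in \mathfrak{r}^i \tfrac{\partial}{\partial x_i} = \mathfrak{\g}^i \subseteq \mathfrak{t}$. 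In particular the candidate generators $-\nu_j\tfrac{\partial}{\partial x_{j+1}}$ stay inside the feed-forward Lie algebra, so the transformations they produce preserve the network architecture, which is the whole point of working in the triangular setting.

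The first step is the bracket identity. I would verify by a direct coordinate computation (or by specializing Theorem \ref{thm:Lop} to a constant generator, for which $\mathsf{L}_{\N}$ annihilates the constant and only the $-g_{p+1}$ term survives) that
\[
\Bigl[\N,\, -\nu_j \tfrac{\partial}{\partial x_{j+1}}\Bigr] = \nu_j \tfrac{\partial}{\partial x_j}, \qquad 1 \le j \le n-1.
\]
Indeed, writing $\N = \sum_{i=1}^{n-1} x_{i+1}\tfrac{\partial}{\partial x_i}$ and using that the generator has constant coefficients, the only surviving contribution comes from differentiating the coefficients $x_{i+1}$ of $\N$ in the direction $x_{j+1}$, which picks out the single index $i=j$. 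Equivalently ${\rm ad}_{\N}$ sends $\tfrac{\partial}{\partial x_k}\mapsto -\tfrac{\partial}{\partial x_{k-1}}$ for $2\le k\le n$ and kills $\tfrac{\partial}{\partial x_1}$. This exhibits every $\nu_j\tfrac{\partial}{\partial x_j}$ with $1\le j\le n-1$ as an element of $\text{im}({\rm ad}_{\N})$, hence as a removable term.

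It then remains to show that nothing further survives and that $\nu_n\tfrac{\partial}{\partial x_n}$ genuinely does. From the description above, $\text{im}({\rm ad}_{\N})$ on the constants is exactly $\langle \tfrac{\partial}{\partial x_1},\dots,\tfrac{\partial}{\partial x_{n-1}}\rangle$, an $(n-1)$-dimensional subspace of the $n$-dimensional space of constant fields, with kernel $\langle \tfrac{\partial}{\partial x_1}\rangle$. Consequently the complement $\mathcal{C}$ is one-dimensional, and we may and do take $\mathcal{C} = \langle \tfrac{\partial}{\partial x_n}\rangle$; under this choice only $\nu_n\tfrac{\partial}{\partial x_n}$ remains in the degree-zero part of the normal form, as claimed.

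The computation is routine, so the only point that needs real care is the last one: checking that $\tfrac{\partial}{\partial x_n}$ cannot be written as $[\N,\,\cdot\,]$ of any constant generator, so that it is legitimately forced into the complement rather than being eliminable. This is immediate here, because a constant generator mapping to $\tfrac{\partial}{\partial x_n}$ would have to be a multiple of $\tfrac{\partial}{\partial x_{n+1}}$, which does not exist. Conceptually this is simply the degree-zero shadow of the triangular structure recorded in Theorem \ref{thm:Lop}: ${\rm ad}_{\N}$ shifts weight toward the lower-indexed components and leaves the terminal slot $x_n$ unreachable.
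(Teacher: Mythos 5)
Your proof is correct and takes essentially the same route as the paper, whose entire argument is the bracket identity \([\N,-\nu_j\tfrac{\partial}{\partial x_{j+1}}]=\nu_j\tfrac{\partial}{\partial x_j}\) for \(1\le j\le n-1\), with \(\nu_n\tfrac{\partial}{\partial x_n}\) surviving because no generator \(\tfrac{\partial}{\partial x_{n+1}}\) exists. Your extra checks---that the constant generators lie in the feed-forward Lie algebra \(\mathfrak{t}\), and that \({\rm ad}_{\N}\) restricted to constants has image exactly \(\langle\tfrac{\partial}{\partial x_1},\dots,\tfrac{\partial}{\partial x_{n-1}}\rangle\) so the complement is genuinely one-dimensional---merely make explicit what the paper leaves implicit.
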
 
 
	\begin{theorem}[An algorithm: triangular \(\Sl\)-style normal form]\label{thm:sl2triple}	
		First we  define the following \(\Sl\)-triples \(\langle 	{\N}_p,	{\H}_p,	{\M}_p \rangle\)
		for all \(1\leq p \leq n\)
		\ba
		\nonumber
		{\N}_p&:=&\sum_{i=p}^{n-1} x_{i+1} \frac{\partial }{\partial x_i},
  \\\nonumber
		{\H}_p&:=& \sum_{i=p}^{n} (n+p-2i) x_{i} \frac{\partial }{\partial x_{i}},
		\\\label{eq:inductivesl2}
		{\M}_p&:=& \sum_{i=p+1}^{n} (i-p) (n-i+1)  x_{i-1} \frac{\partial }{\partial x_{i}},
		\ea
		with these relations between the operators:
		\bas
		[	{\M}_p,	{\N}_p]=	{\H}_p,\,\,\,\, [	{\H}_p,{\N}_p]=-2 {\N}_p,\,\,\,\, [	{\H}_p,{\M}_p]=2 {\M}_p.
		\eas
		Observe that \({\N}_1={\N},\) see also {\em Figure} \eqref{fig:inductive}.
		In the  space of triangular functions \(\mathfrak{r}^p\), the following holds:
		\bas 
		\mathfrak{r}^p= \mathsf{L}_{{\N}_p}(\mathfrak{r}^p)\oplus  {\ker}(\mathsf{L}_{{\M}_p})\mid 	\mathfrak{r}^p,
		\eas
   where
   \bas
\mathsf{L}_{\N}(g_p(x_p,x_{p+1},\cdots,x_n))&=&\sum_{i=p}^{n}  x_{i+1}
		\frac{\partial}{\partial x_i}g_p(x_p,x_{p+1},\cdots,x_n),
  \\
\mathsf{L}_{{\M}_p}(f_p(x_p,x_{p+1},\cdots,x_n))&=& \sum_{i=p+1}^{n} (i-p) (n-i+1)  x_{i-1} \frac{\partial }{\partial x_{i}}f_p(x_p,x_{p+1},\cdots,x_n).
     \eas
		Now, we define  the normal form on \(\mathfrak{r}^p\) 
		\bas 
		\mathcal{C}_p:=\langle {\bar{f}}_p(x_p,\cdots,x_n) \frac{\partial}{\partial x_{p}}
		\mid  {\bar{f}}_p \in  {\ker}(\mathsf{L}_{{\M}_p})\mid 	\mathfrak{r}^p\rangle,\,\,\,\, \mbox{for all }\,\,\, 1\leq p \leq n.
		\eas
  Note that for \(p=n\)  the \(\Sl\)-triple is trivial hence the complement is 
  \(\mathfrak{r}^n=
  \mathbb{R}[[x_n]].\) The normal form  of \eqref{eq:eqs} is  
  \begin{align}\label{eq:eqsNF}
      \begin{cases} 
\dot{x}_1&=
  {x_{2}}+{\tilde f}_1(x_1,\cdots,x_n),\,\, \,\,\,\,\qquad{\tilde f}_1\in {\ker}(\mathsf{L}_{{\M}_1})\mid 	\mathfrak{r}^1,
		\\
		\dot{x}_2&=x_{3}+
		{\tilde f}_{2}(x_2,\cdots,x_n),\,\,\,\, \,\,\,\,\qquad
		{\tilde f}_{2},\in  {\ker}(\mathsf{L}_{{\M}_2})\mid 	\mathfrak{r}^2,
		\\
		&\vdots
		\\
		\dot{x}_n&=	f_n(x_n),	\quad\quad\qquad\qquad\,\, \,\,\,\,\qquad f_n\in  {\ker}(\mathsf{L}_{{\M}_n})\mid 
  \mathfrak{r}^n.
	\end{cases}
 \end{align}
  	\end{theorem}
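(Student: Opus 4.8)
The plan is to establish the direct-sum decomposition $\mathfrak{r}^p = \mathsf{L}_{\N_p}(\mathfrak{r}^p) \oplus \ker(\mathsf{L}_{\M_p})\mid\mathfrak{r}^p$ and then assemble the componentwise normal forms into the stated system \eqref{eq:eqsNF}. The key observation is that, by Theorem \ref{thm:Lop}, the action of $\ad_{\N}$ on the quotient $\mathsf{G}^p$ reduces on the $p$-th component to the scalar operator $\mathsf{L}_{\N_p}$ acting on $\mathfrak{r}^p$, so the normal-form problem decouples component by component. Thus it suffices to prove the decomposition for each fixed $p$ separately, treating $\mathfrak{r}^p = \mathbb{R}[[x_p,\cdots,x_n]]$ as the representation space on which the $\Sl$-triple $\langle \N_p, \H_p, \M_p\rangle$ acts.

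First I would verify that $\langle \N_p, \H_p, \M_p\rangle$ is genuinely an $\Sl$-triple by checking the three commutator relations $[\M_p,\N_p]=\H_p$, $[\H_p,\N_p]=-2\N_p$, and $[\H_p,\M_p]=2\M_p$; this is a direct computation using the explicit coefficients $(i-p)(n-i+1)$ in $\M_p$, which are exactly the standard $\mathfrak{sl}_2$ ladder coefficients for an irreducible module of dimension $n-p+1$. The operators $\mathsf{L}_{\N_p}$ and $\mathsf{L}_{\M_p}$ are the induced derivations on the symmetric algebra $\mathfrak{r}^p$, so they again satisfy the $\Sl$ relations as operators on $\mathfrak{r}^p$. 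The crucial structural point is that the defining representation $\langle x_p,\cdots,x_n\rangle$ is the irreducible $(n-p+1)$-dimensional $\Sl$-module, and $\mathfrak{r}^p$ is its symmetric (polynomial) algebra, hence a locally finite, completely reducible $\Sl$-module graded by $x$-degree.

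Next I would invoke the representation theory of $\mathfrak{sl}_2$: on any finite-dimensional $\Sl$-module $W$ one has the decomposition $W = \mathrm{im}(\mathsf{L}_{\N_p})\oplus \ker(\mathsf{L}_{\M_p})$, which follows from the weight-space structure because $\mathsf{L}_{\N_p}$ (the lowering operator) is surjective onto the non-lowest-weight subspace while $\ker(\mathsf{L}_{\M_p})$ (the highest-weight vectors) spans a complement of the same dimension. Since $\mathfrak{r}^p$ is the direct sum of its finite-dimensional homogeneous pieces $\mathfrak{r}^p_d$ and each $\mathsf{L}$ operator preserves degree, the decomposition holds on each $\mathfrak{r}^p_d$ and therefore, taking the (formal) direct sum over $d$, on all of $\mathfrak{r}^p$. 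This is precisely the $\Sl$-style complement, now applied to \emph{functions} $f_p$ rather than to vector fields; defining $\mathcal{C}_p$ as those $\bar f_p\,\partial/\partial x_p$ with $\bar f_p\in\ker(\mathsf{L}_{\M_p})\mid\mathfrak{r}^p$ gives the stated componentwise normal form.

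Finally I would assemble these into the full system. Working from the bottom component $p=n$ upward (where the $\Sl$-triple is trivial and the complement is all of $\mathbb{R}[[x_n]]$), the triangular structure from Proposition \ref{pro:lie} and Corollary \ref{cor:Lie} guarantees that normalizing the $p$-th component via a generator supported on components $\geq p$ does not disturb the already-normalized lower components modulo $\mathfrak{t}^{p-1}$, which is exactly the content of the $\mod\mathfrak{t}^{p-1}$ statement in Theorem \ref{thm:Lop}. The main obstacle I anticipate is the bookkeeping of this inductive, triangular elimination: one must check that the cross-term $-g_{p+1}\,\partial/\partial x_p$ appearing in $\ad_{\N}$ can be absorbed consistently so that the homological equation $v^{[0]}_1 = w^{[1]}_1 + \mathcal{N}_1 g^{[1]}_1$ of \eqref{eq:nfcomp} is solvable at each grade without reintroducing terms into lower components — in other words, that the off-diagonal coupling is genuinely triangular and the induction on $p$ closes. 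Once that is in place, the constant terms are handled by Lemma \ref{lem:Cons} and the nonlinear terms by the degreewise $\Sl$ decomposition above, yielding \eqref{eq:eqsNF}.
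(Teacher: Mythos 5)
Your proposal is correct and is essentially the paper's own (implicit) argument: the paper states Theorem \ref{thm:sl2triple} without a separate proof, its justification being precisely the ingredients you assemble --- the componentwise reduction of $\ad_{\N}$ via Theorem \ref{thm:Lop}, the standard $\mathfrak{sl}_2$ decomposition $W=\mathrm{im}(e)\oplus\ker(f)$ applied degree-by-degree to $\mathfrak{r}^p$ viewed as the (completed) symmetric algebra of the irreducible $(n-p+1)$-dimensional module spanned by $x_p,\dots,x_n$, the triangular filtration of Proposition \ref{pro:lie} and Corollary \ref{cor:Lie}, and Lemma \ref{lem:Cons} for the constant terms. The only step you leave conditional --- absorbing the cross-term $-g_{p+1}\,\partial/\partial x_p$ --- closes at once: for existence of the normal form one may simply take generators with $g_{p+1}=\cdots=g_n=0$, so the homological operator on the $p$-th component is exactly $\mathsf{L}_{\N_p}$, the already-normalized components of index greater than $p$ are untouched because $[\mathfrak{\g}^p,\mathfrak{t}]\subseteq\mathfrak{t}^p$, and only components of index less than $p$, which your bottom-up induction normalizes later, are disturbed.
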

  \begin{remark}
    Note that no terms would be removed from the last component,
since \({\M}_n=0.\) Therefore, the normal form in the  \(n\)-th component  is the same as the original \(n\)-th  component, i.e. \(f_n(x_n).\)
  \end{remark}
   
		\newlength{\tempwidth}
		\newlength{\tempheight}
		
		\newcommand{\Mark}[2]
		{\tikz[remember picture, overlay]%
			{\node[anchor=base](#1){$#2$};}}
\newpage
		\begin{figure}
		    \centering
		  \begin{equation}\nonumber
		{\N}=\begin{array}{c}
		\textrm{${\N}={\N}_{1}$}\\
		\overbrace{%
			\begin{pmatrix}
			0& &1&0&0&\cdots&0&0& \\
			\vphantom{\overbrace{\strut}} \\
			0& &\Mark{NW}{0}&1&0&\cdots&0&0\\
			0&&0&0&1&\vdots&\vdots&\vdots\\
			0&&0&0&0&\vdots&0&0\\
			\vdots&&\vdots&\vdots&\vdots&\vdots&1&0\\
			\vdots&&\vdots&\vdots&\vdots&\vdots&0&1\\
			0&&0&0&0&\cdots&0&\Mark{SE}{0}\\
			\vphantom{\underbrace{\strut}}
			\end{pmatrix}}
		\end{array}
		\end{equation}
		    \caption{Nested \(\N_p,\) for all  $1\leq p \leq n $.}\label{fig:inductive}
		\end{figure}
		\begin{tikzpicture}[remember picture,overlay]
		\pgfextracty{\tempheight}{\pgfpointdiff{\pgfpointanchor{SE}{south}}%
			{\pgfpointanchor{NW}{north}}}
		\global\tempheight=\tempheight
		\pgfextractx{\tempwidth}{\pgfpointdiff{\pgfpointanchor{NW}{west}}%
			{\pgfpointanchor{SE}{east}}}
		\global\tempwidth=\tempwidth
		\path (NW.north west) +(0.5\tempwidth,-0.5\tempheight) node{$\displaystyle
			\begin{array}{c}
			\textrm{${\N}_{2}$}\\
			\overbrace{\underbrace{%
					\begin{pmatrix}
					\rule{0pt}{\tempheight}\rule{\tempwidth}{0pt}
					\end{pmatrix}}}\\
			\end{array}$};
		\end{tikzpicture}

	\section{Classification of quadratic normal forms via invariant theory}\label{sec:inv}
In this section, we provide the quadratic normal form for the given feed forward network \eqref{eq:eqs} for arbitrary dimension \(n.\) To achieve this, we classify the generators of $ {\ker}(\mathsf{L}_{{\M}_p})$ by looking for the transvectant of \(x_p\)
(since \(x_p\) is the trivial generator of   $ {\ker}(\mathsf{L}_{{\M}_p})$)
 with itself.

	\begin{lemma}\label{lem:trans}
		For given \(\Sl\)-triple  \(\langle 	{\N}_p,	{\H}_p,	{\M}_p \rangle \) in \eqref{eq:inductivesl2} the  quadratic  kernel of 
		\(	{\M}_p\) or equivalently  the  quadratic generators of  $ {\ker}(\mathsf{L}_{{\M}_p})\mid 	\mathfrak{r}^p$ is
		\ba\label{eq:inv2}
		\tau^k_p= \sum_{j=0}^{k} (-1)^{k-j} 
 \binom{n-p-(k-j}{j}\binom{n-p-j}{k-j}x_{p+k-j}x_{p+j},
  \ea
		for all  $1 \leq p\leq n$ and  $0\leq  k\leq n-p$.
	\end{lemma}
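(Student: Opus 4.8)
The plan is to read Lemma \ref{lem:trans} as a statement about the irreducible $\Sl$-module carried by the variables $x_p,\dots,x_n$. Writing $N:=n-p$, these $N+1$ variables form the weight basis of $V_N$, the irreducible $\Sl$-module of highest weight $N$: the operator $\H_p$ acts as the derivation sending $x_i$ to $(n+p-2i)x_i$, so $x_{p+a}$ has weight $N-2a$; the lowering operator $\mathsf{L}_{\N_p}$ satisfies $\mathsf{L}_{\N_p}x_{p+a}=x_{p+a+1}$ (with $\mathsf{L}_{\N_p}x_n=0$), and $\mathsf{L}_{\M_p}$ is the raising operator, with $x_p$ the highest weight vector. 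Hence the quadratic part of $\ker(\mathsf{L}_{\M_p})\mid\mathfrak{r}^p$ is precisely the space of highest weight vectors inside $S^2V_N$. By Clebsch--Gordan, $S^2V_N=\bigoplus_{j=0}^{\lfloor N/2\rfloor}V_{2N-4j}$, so this space has one basis vector per summand, and these vectors have the distinct $\H_p$-weights $2N-2k$ for even $k$ with $0\le k\le N$.

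First I would produce the candidate generators as the transvectants $\tau^k_p=(x_p,x_p)^{(k)}$. Since the $k$-th transvectant is the $\Sl$-equivariant projection $V_N\otimes V_N\to V_{2N-2k}$ and $x_p$ is a highest weight vector, $\tau^k_p$ is automatically annihilated by $\mathsf{L}_{\M_p}$. To pin down its coordinates I would work in $V_N\otimes V_N$, seeking $\widetilde{w}=\sum_{a=0}^{k}c_a\,x_{p+a}\otimes x_{p+(k-a)}$ with $(\mathsf{L}_{\M_p}\otimes 1+1\otimes\mathsf{L}_{\M_p})\widetilde{w}=0$. Using $\mathsf{L}_{\M_p}x_{p+a}=a(N+1-a)x_{p+a-1}$, read off from \eqref{eq:inductivesl2}, and collecting the coefficient of each weight-$(2N-2k+2)$ tensor gives the two-term recurrence
\[
(a+1)(N-a)\,c_{a+1}+(k-a)(N-k+1+a)\,c_a=0,
\]
whose solution with $c_0=1$ is $c_a=(-1)^a\binom{k}{a}\frac{(N-k+a)!\,(N-a)!}{(N-k)!\,N!}$. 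The element $\tau^k_p$ is then the image of $\widetilde{w}$ under the multiplication map $V_N\otimes V_N\to S^2V_N$.

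The next step is to match this with the stated closed form. A direct simplification of factorials shows that the coefficient of $x_{p+a}x_{p+(k-a)}$ in the expression of Lemma \ref{lem:trans}, namely $(-1)^a\binom{N-a}{k-a}\binom{N-k+a}{a}$, equals $\binom{N}{k}\,c_a$; as $\binom{N}{k}$ is independent of $a$, the stated $\tau^k_p$ is a fixed nonzero multiple of the image of $\widetilde{w}$, hence lies in $\ker(\mathsf{L}_{\M_p})$. The vanishing for odd $k$ is built into the recurrence: it forces $c_{k-a}=(-1)^k c_a$, so the coefficient array is antisymmetric when $k$ is odd and symmetric when $k$ is even; the antisymmetric tensor projects to $0$ in $S^2V_N$, which is exactly the classical identity $(x_p,x_p)^{(k)}=(-1)^k(x_p,x_p)^{(k)}$, while for even $k$ the projection is nonzero.

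Finally I would collect the pieces. The nonzero transvectants $\tau^k_p$ occur for even $k$, $0\le k\le N$, so there are $\lfloor N/2\rfloor+1$ of them, matching the number of Clebsch--Gordan summands of $S^2V_N$; having distinct weights they are linearly independent; and each is a highest weight vector. Since the space of highest weight vectors in $S^2V_N$ has dimension equal to the number of summands, the $\tau^k_p$ form a basis of the quadratic part of $\ker(\mathsf{L}_{\M_p})\mid\mathfrak{r}^p$, as claimed. I expect the main obstacle to be the explicit summation of the recurrence into the double-binomial closed form --- a Vandermonde/Saalsch\"utz-type identity, complicated slightly by the normalization $\mathsf{L}_{\N_p}x_{p+a}=x_{p+a+1}$, which rescales the $x$-basis relative to the monomials $u^{N-a}v^a$ underlying the classical transvectant. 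Once that identity is in place, membership in the kernel, the vanishing for odd $k$, and the spanning property all follow at once.
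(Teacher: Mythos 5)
Your proof is correct, and it arrives at the same objects as the paper --- the transvectants $\tau^k_p=(x_p,x_p)^{(k)}$ of the highest weight vector $x_p$ --- but it certifies them by a genuinely different route. The paper's proof simply quotes the classical transvectant formula (citing \cite[12.3.2]{SVM2007}) with $\mathsf{f}=\mathsf{g}=x_p$, $\omega_{x_p}=n-p$ and $x_p^{(i)}=x_{p+i}$, notes the odd-$k$ vanishing by antisymmetry, and does \emph{not} establish inside the lemma that these transvectants exhaust the quadratic kernel: completeness is deferred to Section \ref{SEC:NFAPP}, where the Cushman--Sanders test combined with Hermite reciprocity matches the generating function. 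You instead derive the coefficients from first principles by solving the highest-weight recurrence $(a+1)(N-a)c_{a+1}+(k-a)(N-k+1+a)c_a=0$ in $V_N\otimes V_N$ (your normalization $\mathsf{L}_{\M_p}x_{p+a}=a(N+1-a)x_{p+a-1}$ is exactly what \eqref{eq:inductivesl2} gives), check by direct factorial manipulation that the stated coefficient equals $\binom{N}{k}c_a$ --- so the ``Saalsch\"utz-type obstacle'' you flag at the end is already dispatched by your own computation, no hypergeometric identity is needed --- and then prove completeness within the lemma: Clebsch--Gordan gives $\lfloor N/2\rfloor+1$ highest-weight vectors in $S^2V_N$, your nonzero even transvectants are exactly that many, and their weights $2N-4j$ are distinct, so they form a basis. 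What each approach buys: yours makes Lemma \ref{lem:trans} self-contained, proving the spanning claim on the spot, and your symmetric/antisymmetric tensor argument gives a cleaner reason both for the odd-$k$ vanishing and for the even-$k$ nonvanishing; the paper's generating-function route is less elementary here but more scalable, since the same Hermite-reciprocity mechanism validates the normal forms across all the examples of Section \ref{subsec:examples}. One cosmetic discrepancy: your coefficient carries $(-1)^a$ where the paper's formula has $(-1)^{k-a}$; these agree for even $k$, the only surviving case, so nothing is affected.
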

	
 \begin{proof}
    The transvectant of $\mathsf{f}$ and $\mathsf{g}$ is given by:
    \begin{align*}
    {(\mathsf{f},\mathsf{g})}^{(k)} &= \sum_{i+j=k} (-1)^i \binom{\omega_{\mathsf{f}}-i} {k-i}
    \binom{\omega_\mathsf{g}-j}{k-j} \mathsf{f}^{(i)}\mathsf{g}^{(j)},
    \end{align*}
    where $\mathsf{f}^{(i)}:=\mathsf{L}_{\N_p}^i(\mathsf{f})$ (the $i$th action of $\mathsf{L}_{{\N}_p}$ on $\mathsf{f}$) and $\omega_\mathsf{f}$ and $\omega_g$ are the weights (eigenvalues) of $\mathsf{f}$ and $\mathsf{g}$, respectively, that is, $ \mathsf{L}_{{\H}_p}\mathsf{f}=\omega_\mathsf{f} \mathsf{f}$ and $ \mathsf{L}_{{\H}_p}\mathsf{g}=\omega_\mathsf{g} \mathsf{g}$, see \cite[12.3.2]{SVM2007}. As can be seen from \eqref{eq:inductivesl2}, $x_p\in\ker(\mathsf{L}_{{\M}_p})$. 
    Hence, $x_p^{(i)}=x_{p+i}$ and $\omega_{x_p}=n-p$. Computing the transvectant of this kernel with itself via the above formula generates the other quadratic kernels in addition to the obvious \(x_p^2\). Observe that
    \begin{align*}
    \tau_p^k&:=(x_p,x_p)^{(k)} = \sum_{j=0}^{k} (-1)^{k-j} 
    \binom{\omega_{x_p}-(k-j)}{j}
    \binom{\omega_{x_p}-j}{k-j}x_p^{(k-j)}x_p^{(j)} \\
    &= \sum_{j=0}^{k} (-1)^{k-j} 
    \binom{n-p-(k-j)}{j}
    \binom{n-p-j}{k-j} x_{p+k-j}x_{p+j}, \quad p+k\leq n.
    \end{align*}
    We have obtained the desired result. Note that for odd values of $k$, the transvectant is anti-symmetric; hence, $\tau_p^k=(x_1,x_1)^{(2l+1)}=0$ for $l=0,\cdots,\infty$. We remark here that \(\tau_p^{2l}\simeq x_{l+p}^{2}\) modulo the image of \(\mathsf{L}_\mathsf{N}\).
    This equivalence could be used to reformulate the next Theorem.
\end{proof}

	 \begin{theorem}[Quadratic normal form]\label{thm:ndnf}
  We can achieve a quadratic inner formal normal form for Equation \eqref{eq:eqs} through a sequence of invertible transformations. This normal form is given by the following expression:
		\bas 
		v^{[1]}:=\N+
		\sum_{1\leq p+k\leq n} a^k_p\tau^k_p \frac{\partial }{\partial x_p},
		\eas
		where the $a^k_p$ are normal form coefficients, determined by the coefficients of the quadratic terms of the original system.
	\end{theorem}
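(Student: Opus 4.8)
The plan is to obtain this quadratic normal form as the grade-one specialization of the triangular $\Sl$-style normal form of Theorem \ref{thm:sl2triple}, made explicit by the transvectant computation of Lemma \ref{lem:trans}. With respect to the first-level grading $\delta^{(1)}$, the quadratic part of \eqref{eq:eqs} is exactly its grade-one component $v_1^{[0]}=\sum_{p=1}^n f_p^{(2)}(x_p,\cdots,x_n)\frac{\partial}{\partial x_p}$, where $f_p^{(2)}$ is the quadratic truncation of $f_p$ and $v_0^{[0]}=\N$. Following the abstract scheme of Section \ref{sec:preli}, I would solve the homological equation at grade one, $v_1^{[0]}=w_1^{[1]}+\mathcal{N}_1 g_1^{[1]}$ with $\mathcal{N}_1=\ad_\N$ and $g_1^{[1]}$ a quadratic feed forward generator, and identify the surviving term $w_1^{[1]}\in\mathcal{C}_1$ as the normal form.

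First I would establish that the reduction can be organized to respect the filtration $0\subset\mathfrak{t}^1\subset\cdots\subset\mathfrak{t}^n$. By Proposition \ref{pro:lie} and Corollary \ref{cor:Lie}, $\ad_\N$ preserves this filtration, and Theorem \ref{thm:Lop} shows that a feed forward generator $\sum_{q=p}^n g_q\frac{\partial}{\partial x_q}$ satisfying $\mathsf{L}_\N g_q=g_{q+1}$ acts on the $p$-th component, modulo $\mathfrak{t}^{p-1}$, as $(\mathsf{L}_\N g_p-g_{p+1})\frac{\partial}{\partial x_p}$. Working modulo $\mathfrak{t}^{p-1}$ means the normalization of component $p$ leaves the already-treated lower-index components untouched, so I can proceed one component at a time. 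For each $p$, Theorem \ref{thm:sl2triple} supplies the $\Sl$-splitting $\mathfrak{r}^p=\mathsf{L}_{\N_p}(\mathfrak{r}^p)\oplus\bigl(\ker(\mathsf{L}_{\M_p})\mid\mathfrak{r}^p\bigr)$, so the quadratic part of $f_p$ can be reduced, via the available generators, to its projection onto the quadratic subspace of $\ker(\mathsf{L}_{\M_p})\mid\mathfrak{r}^p$.

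Next I would invoke Lemma \ref{lem:trans}: the quadratic subspace of $\ker(\mathsf{L}_{\M_p})\mid\mathfrak{r}^p$ is spanned by the transvectants $\tau_p^k=(x_p,x_p)^{(k)}$ for $0\le k\le n-p$, the odd-$k$ transvectants vanishing by antisymmetry. Hence the surviving quadratic part of the $p$-th component is a combination $\sum_k a_p^k\tau_p^k$ with $p+k\le n$, and summing over $p$ yields $v^{[1]}=\N+\sum_{1\le p+k\le n} a_p^k\tau_p^k\frac{\partial}{\partial x_p}$. The coefficients $a_p^k$ are read off by projecting $f_p^{(2)}$ onto the basis $\{\tau_p^k\}$ along $\mathsf{L}_{\N_p}(\mathfrak{r}^p)$, so they are explicit linear functions of the original quadratic coefficients of \eqref{eq:eqs}.

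The main obstacle is the cross-component coupling term $-g_{p+1}$ in Theorem \ref{thm:Lop}: unlike the classical $\Sl$ normal form, $\ad_\N$ is not block-diagonal on the feed forward components, so eliminating image terms in one component is entangled with the generator used in the adjacent one. Controlling this requires fixing the order of elimination along the filtration and verifying, using the containment $[\mathfrak{t}^p,\mathfrak{t}^q]\subseteq\mathfrak{t}^{\min(p,q)}$ of Proposition \ref{pro:lie}, that no step destroys a normalization achieved earlier. A secondary point is confirming that $\{\tau_p^k:0\le k\le n-p,\ k\ \text{even}\}$ is linearly independent, hence a genuine basis of the quadratic kernel; this follows because these are highest-weight vectors of distinct $\mathsf{L}_{\H_p}$-weights in the decomposition of the quadratic part of $\mathfrak{r}^p$, which both matches the count of even $k$ and guarantees the $a_p^k$ are uniquely determined.
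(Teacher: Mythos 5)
Your proposal follows the same route as the paper's own proof, which consists precisely of citing Theorem \ref{thm:sl2triple} (the splitting $\mathfrak{r}^p=\mathsf{L}_{{\N}_p}(\mathfrak{r}^p)\oplus\ker(\mathsf{L}_{{\M}_p})\mid\mathfrak{r}^p$) and Lemma \ref{lem:trans} (the transvectants $\tau_p^k$ give the quadratic kernel); your homological-equation write-up is a faithful expansion of that. The one place you genuinely diverge is the completeness check: the paper verifies that the $\tau_p^{2l}$ exhaust the quadratic kernel via Hermite reciprocity and the Cushman--Sanders test (Section \ref{SEC:NFAPP}), whereas you argue that they are highest-weight vectors of distinct $\mathsf{L}_{{\H}_p}$-weights whose number matches the count of irreducible $\Sl$-summands of the quadratic part of $\mathfrak{r}^p$ (a Clebsch--Gordan count). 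Both are legitimate; yours is more elementary, while the paper's generating-function method is the one it reuses for higher-dimensional checks.

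There is, however, one genuine error in how you organize the componentwise elimination, and taken literally it breaks your induction. You write that ``working modulo $\mathfrak{t}^{p-1}$ means the normalization of component $p$ leaves the already-treated lower-index components untouched.'' This is exactly backwards. From the proof of Theorem \ref{thm:Lop}, $[\N,g_p\frac{\partial}{\partial x_p}]=\mathsf{L}_{\N}(g_p)\frac{\partial}{\partial x_p}-g_p\frac{\partial}{\partial x_{p-1}}$, so the generator used to normalize component $p$ necessarily dumps the quadratic term $-g_p$ into component $p-1$; ``modulo $\mathfrak{t}^{p-1}$'' means these disturbances are uncontrolled, not absent. What the chain constraint $\mathsf{L}_{\N}g_q=g_{q+1}$, $g_n=0$ protects are the components \emph{above} $p$, not below. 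Consequently the elimination must run from $p=n$ down to $p=1$, so that every disturbance lands in a component not yet treated; this is the order the paper follows in its $3$D computation (second component normalized before the first). With the order you state --- lower-index components first, claimed to remain untouched --- normalizing component $p$ would reinstate arbitrary quadratic terms in the already-normalized component $p-1$, and your closing remark that one must ``fix the order of elimination'' does not repair this, since the order you committed to is the failing one. The fix is a single sentence: reverse the induction.
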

 \begin{proof}
     The proof that this is a normal form follows from Theorem \ref{thm:sl2triple} and  Lemma \ref{lem:trans}.
     In Section \ref{SEC:NFAPP} we show that it satisfies the Cushman-Sanders test, and therefore describes the complete normal form in grade \(1\) for all \(n\).
 \end{proof}

	 \subsection{Normal form approval}\label{SEC:NFAPP}
In this section, we utilize a generating function method to verify the correctness of the quadratic normal forms given by  Theorem \ref{thm:ndnf}. How does it assist us? It allows for the validation of both the number and types of terms present in the normal form. Initially, one must identify the terms of the normal form. Subsequently, by utilizing the generating function associated with the normal form, one can perform the validation.

To use this test, we associate an element \(\mathsf{a}\) in normal form, \(\ker(\mathsf{L}_{{{\M}}_p})\), with \(u^\omega t^d\) in which \(d\) is the degree and \(\omega\) is the weight of \(\mathsf{a}\). Then we sum up all these elements, which should be equal to \(\mathcal{P}^{n,p}_2(d,u)\), given in \eqref{cor:app}. It turns out that the normal form is correct. See Section \ref{subsec:examples} for examples.

This method for determining the normal form was originally introduced by Cushman and Sanders in \cite{cushman56nilpotent}. For a more comprehensive understanding of this method, we refer to this source and \cite{sylvester1879tables,SVM2007}.

	\begin{theorem}\label{thm:gen}
		The following holds:
			The generating function for quadratic terms of 
			\({\ker}(\mathsf{L}_{{\M}_p})\) given  in \eqref{eq:inductivesl2} is
			\ba\label{eq:genern}
			\mathcal{P}_2(d,u)=\frac{t^2}{(1-u^2 d)(1-d^2)}.
			\ea
	
  \end{theorem}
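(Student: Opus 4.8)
The plan is to compute the series not directly, but after transporting the problem via Hermite reciprocity to the covariant algebra of the binary quadratic, whose structure is classical and whose Poincaré series is immediate. Writing $V_k := S^k\C^2$ for the irreducible $\Sl$-module of highest weight $k$, I would first fix $p$ and set $m := n-p$. Under $\langle \N_p,\H_p,\M_p\rangle$ the span $\langle x_p,\dots,x_n\rangle$ is exactly $V_m$: the operator $\mathsf{L}_{\H_p}$ acts on $x_i$ with eigenvalue $n+p-2i$, so the weights run $m, m-2,\dots,-m$, with $x_p$ the highest weight vector (this is the data $\omega_{x_p}=n-p$, $x_p^{(i)}=x_{p+i}$ recorded in Lemma \ref{lem:trans}). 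Hence the quadratic part of $\mathfrak{r}^p$ is $S^2 V_m = S^2 S^m\C^2$, and $\ker(\mathsf{L}_{\M_p})$ restricted to it is the space of highest weight vectors, one per irreducible summand, realised concretely by the transvectants $\tau_p^{2l}$. The series to be produced records $u^{\omega}$ for each such highest weight vector of weight $\omega$, with a common factor $t^2$ marking the $x$-degree $2$.

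Next I would apply Hermite reciprocity, the $\mathbf{GL}(2)$-isomorphism $S^2 S^m\C^2\cong S^m S^2\C^2$ \cite{Goldstein2011,olver1987invariant}. Being an isomorphism of modules it preserves weights, so the weights of the highest weight vectors of $S^2 V_m$ agree with those of $S^m V_2$, where $V_2 = S^2\C^2$ is the space of binary quadratics. Letting the variable $d$ record the $S(V_2)$-degree and summing over $m$, the highest weight vectors of $\bigoplus_m S^m V_2 = S(V_2)$ are precisely the covariants of the binary quadratic, bigraded by degree and order; the coefficient of $d^{\,m}$ then yields the weight-generating function of the quadratic covariants of $V_m$, i.e. of the $p$-th component with $m=n-p$.

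The key computational input is the classical structure of this covariant algebra: it is the \emph{free} polynomial algebra on the form $f$ itself (degree $1$, order $2$) and its discriminant $\Delta$ (degree $2$, order $0$). Recording degree by $d$ and order by $u$ gives $f\mapsto d u^2$ and $\Delta\mapsto d^2$, so the bigraded Poincaré series of this free algebra is $\tfrac{1}{(1-d u^2)(1-d^2)}$; restoring the marker $t^2$ yields $\mathcal{P}_2(d,u)=\tfrac{t^2}{(1-u^2 d)(1-d^2)}$, and extracting the coefficient of $d^{\,n-p}$ recovers the finite series $\mathcal{P}^{n,p}_2$.

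I expect the main obstacle to be precisely this structural input, namely the freeness of the covariant algebra of $V_2$ on $f$ and $\Delta$, together with keeping the two gradings straight across the reciprocity: it is the $S(V_2)$-degree, not the $x$-degree, that becomes the variable $d$. As an independent check that bypasses the structure theorem, the Clebsch--Gordan decomposition $S^2 V_m\cong\bigoplus_{l=0}^{\lfloor m/2\rfloor} V_{2m-4l}$ shows that the quadratic highest weight vectors carry weights $2m,2m-4,\dots$; then $t^2\sum_{m\ge 0} d^{\,m}\sum_{l=0}^{\lfloor m/2\rfloor} u^{2m-4l}$, re-indexed by $i=m-2l$ and $j=l$, equals $t^2\sum_{i,j\ge 0} d^{\,i+2j}u^{2i}=\tfrac{t^2}{(1-u^2 d)(1-d^2)}$, which confirms the formula and matches the weights $2(n-p)-4l$ of the $\tau_p^{2l}$ from Lemma \ref{lem:trans}.
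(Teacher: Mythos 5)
Your proof is correct, and its spine --- identify the quadratic part of $\mathfrak{r}^p$ with $S^2S^m\C^2$ for $m=n-p$, transfer by Hermite reciprocity to $S^mS^2\C^2$, and read off the bigraded series of the covariants of the binary quadratic --- is the same route the paper takes. The differences lie in what is proved versus cited, and in how the answer is verified. The paper takes the generating function $\frac{1}{(1-u^2t)(1-t^2)}$ for $S^mS^2\C^2$ as a known fact from \cite[Page 297]{SVM2007} and implements reciprocity simply by exchanging the two degree variables $t$ and $d$; you instead derive that series from the classical structure theorem that the covariant algebra of a binary quadratic is the free polynomial algebra on the form itself (degree $1$, order $2$) and its discriminant (degree $2$, order $0$), thereby supplying the structural input the paper outsources to a reference. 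More notably, your closing Clebsch--Gordan computation, $S^2V_m\cong\bigoplus_{l=0}^{\lfloor m/2\rfloor}V_{2m-4l}$ followed by the re-indexing $i=m-2l$, $j=l$, is by itself a complete and elementary proof of \eqref{eq:genern} that needs neither Hermite reciprocity nor the freeness theorem, and it has no analogue in the paper; the paper's substitute is the Cushman--Sanders test, computing $\frac{\partial}{\partial u}(u\mathcal{P}_2)|_{u=1}=t^2/(1-d)^3$ to confirm that the kernel accounts for all of $S^2S^m\C^2$. Your Clebsch--Gordan check and the paper's test serve the same purpose as consistency certificates, but yours doubles as an independent derivation, which makes your write-up logically self-contained where the paper's is not.
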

	\begin{proof}
		The idea of proving this theorem is applying the Hermite reciprocity \cite[Theorem 6.31]{olver1987invariant} which is 
		\bas
		S^mS^2\mathbb{C}^2\simeq S^2S^m\mathbb{C}^2,
		\eas
		where \(S^2 \mathbb{C}^2\) is  the space of polynomial of degree $2$ in two variables, therefore \(S^m S^2 \mathbb{C}^2\) is the polynomial space of degree $m$ with three variables. And 
		where \(S^m \mathbb{C}^2\) is  the space of polynomial of degree $m$ in two variables, hence   \(S^2 S^m \mathbb{C}^2\) is the space of polynomial of degree 
		$2$ with $m+1$ variables. 
		
		The generating function for $S^mS^2\mathbb{C}^2$ is 
		$$
		\mathcal{P}_2(t,u)=\frac{d^2}{(1-u^2 t)(1-t^2)},$$ see 
		\cite[Page 297]{SVM2007}. We note that $d^2$ (which expresses  the 
		\(3\)D space here, since a quadratic expression in two variables has three coefficients) is not written in this reference. However,  we put it here to make the application of Hermite reciprocity clear.  Now, by employing  Hermite reciprocity;
		exchanging  $t$ and $d$ in $\mathcal{P}_2(t,u)$
		we  obtain
		\ba\label{eq:gen3n}
		\mathcal{P}_2{(d,u)}=\frac{t^2}{(1-u^2 d)(1-d^2)}.
		\ea
		If we apply the Cushman-Sanders test, we obtain
  	\ba\label{eq:gen3ntest}
		\mathcal{P}_2{(d)}=\frac{\partial}{\partial u} \frac{ut^2}{(1-u^2 d)(1-d^2)}|_{u=1}=\frac{t^2}{(1-d)^3},
		\ea
  which brings out the generating function for quadratic polynomials with \(m\) variables, $S^2S^m\mathbb{C}^2$.  
		
	\end{proof}
	\begin{remark}
    The application of Hermite reciprocity is not restricted to triangular systems but can also be used to generate normal form theorems to a limited degree for arbitrary dimensions, just as the general normal form has been determined for limited dimensions to an arbitrary degree.
	\end{remark}
     

	\begin{corollary}[Normal form approval]\label{cor:app}  The  quadratic  generating function of \({\ker}(\mathsf{L}_{{\M}_p})\) for fixed \(n\) is
		\bas 
		\mathcal{P}^{n,p}_2(u)=t^2 u^{2\alpha}\sum_{l=0}^{{[\frac{n-p}{2}]}}u^{4l},
		\eas

for odd values of \(p,\)
\(\alpha\) equals $0$, and for even values of   \(p,\) 
\(\alpha\)  equals $1$.		
		\end{corollary}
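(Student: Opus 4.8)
The plan is to derive the claimed finite generating function $\mathcal{P}^{n,p}_2(u)$ from the general two-variable generating function $\mathcal{P}_2(d,u)=\tfrac{t^2}{(1-u^2 d)(1-d^2)}$ established in Theorem \ref{thm:gen}, by specializing to the fixed ambient dimension $n$ and a fixed component $p$. The key observation is that the space $\mathfrak{r}^p=\mathbb{R}[[x_p,\dots,x_n]]$ carries an $\Sl$-representation via $\langle \N_p,\H_p,\M_p\rangle$, and the quadratic kernel of $\mathsf{L}_{\M_p}$ is spanned exactly by the transvectants $\tau^k_p$ from Lemma \ref{lem:trans}, with $0\le k\le n-p$ and $\tau^k_p=0$ for odd $k$. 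Thus I would first record that the nonzero quadratic kernel elements are $\tau^{2l}_p$ for $l=0,1,\dots,[\tfrac{n-p}{2}]$, giving exactly $[\tfrac{n-p}{2}]+1$ independent generators.

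Next I would assign to each surviving generator $\tau^{2l}_p$ its monomial marker $t^2 u^{\omega}$, where the exponent $2$ on $t$ records the quadratic degree $d=2$ and $\omega$ is the $\H_p$-weight of $\tau^{2l}_p$. Using the remark at the end of Lemma \ref{lem:trans} that $\tau^{2l}_p\simeq x_{l+p}^2$ modulo the image of $\mathsf{L}_{\N_p}$, I would compute the $\H_p$-weight of $x_{l+p}^2$ from the formula $\H_p=\sum_{i=p}^n (n+p-2i)x_i\frac{\partial}{\partial x_i}$ in \eqref{eq:inductivesl2}. The single-variable weight of $x_{l+p}$ is $n+p-2(l+p)=n-p-2l$, so the weight of $x_{l+p}^2$ is $\omega_l = 2(n-p-2l)=2(n-p)-4l$. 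Summing the markers over the surviving generators then gives
\begin{equation}\nonumber
\mathcal{P}^{n,p}_2(u)=t^2\sum_{l=0}^{[\frac{n-p}{2}]} u^{2(n-p)-4l}.
\end{equation}
Reindexing by $l'=[\tfrac{n-p}{2}]-l$ reverses the direction of the sum and produces $t^2 u^{2\alpha}\sum_{l'=0}^{[\frac{n-p}{2}]}u^{4l'}$, where $u^{2\alpha}$ is the minimal weight: when $n-p$ is even the smallest weight is $0$, so $\alpha=0$, and when $n-p$ is odd the smallest weight is $2$, so $\alpha=1$. Checking the parity dictionary against $p$ (with $n$ fixed of a definite parity) yields the stated rule that $\alpha=0$ for odd $p$ and $\alpha=1$ for even $p$.

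The main obstacle I anticipate is pinning down the parity bookkeeping for $\alpha$ cleanly and consistently with the paper's conventions: the statement phrases $\alpha$ in terms of the parity of $p$ alone, which implicitly fixes the parity of $n$, so I would make explicit that $\alpha$ really depends on the parity of $n-p$ and then translate this into the parity of $p$ under the reading appropriate to the fixed $n$. A secondary technical point is justifying that the markers $t^2 u^{\omega_l}$ exactly tabulate the finite kernel, i.e.\ that no further relations or hidden generators appear and that the specialization of $\mathcal{P}_2(d,u)$ at $d=t^2$-degree $2$ truncates correctly to the finite list; this follows from the highest-weight structure of the $\Sl$-representation on quadratics in $\mathfrak{r}^p$ and the transvectant count in Lemma \ref{lem:trans}, but I would state it carefully to connect the infinite generating function of Theorem \ref{thm:gen} with the finite one here.
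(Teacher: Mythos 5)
Your computation is correct and lands on the right formula, but your route is genuinely different from the paper's. The paper proves this corollary by Taylor-expanding the two-variable generating function $\mathcal{P}_2(d,u)=\frac{t^2}{(1-u^2d)(1-d^2)}$ of Theorem \ref{thm:gen} in powers of $d$ and reading off the coefficient of the appropriate power of $d$ (the one matching the number of variables $n-p+1$ of $\mathfrak{r}^p$, i.e.\ $d^{n-p}$; the paper writes $d^{p-1}$, an indexing slip), interpreting each term $t^2(u^2d)^{k}d^{2l}$ as $\tau^{2l}_p$ with eigenvalue $2k$. You instead enumerate the kernel generators $\tau^{2l}_p$, $0\le l\le [\frac{n-p}{2}]$, from Lemma \ref{lem:trans}, compute their $\mathsf{H}_p$-weights $2(n-p)-4l$, and sum the markers; your weight bookkeeping is right, though it is cleaner to read the weight off the transvectant rule $\omega_{(\mathsf{f},\mathsf{g})^{(k)}}=\omega_{\mathsf{f}}+\omega_{\mathsf{g}}-2k$ than to detour through $x_{l+p}^2$ (both are genuine weight vectors of the same weight, so your detour is harmless). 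The difference in routes matters for the paper's logic: this corollary belongs to the ``normal form approval'' (Cushman--Sanders test), whose point is that the count of kernel elements is obtained \emph{independently} of the transvectant computation -- via Hermite reciprocity -- so that it can certify that the list in Lemma \ref{lem:trans} is complete. Your derivation takes that completeness as input, so as a verification device it is circular; to restore independence you would need the Clebsch--Gordan decomposition $S^2V_{n-p}\cong\bigoplus_{l=0}^{[\frac{n-p}{2}]}V_{2(n-p)-4l}$, which you gesture at (``highest-weight structure'') but do not develop. Finally, you are right to flag the parity bookkeeping, and the issue is real: $\alpha$ is determined by the parity of $n-p$, not of $p$ alone, so the rule as literally stated holds only for odd $n$; indeed the paper's own $4$D and $6$D examples (with $p=1$ odd) have generating functions $t^2u^2(1+u^4)$ and $t^2u^2(1+u^4+u^8)$, i.e.\ $\alpha=1$.
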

  \begin{proof}
     By taking the  Taylor expansion of \eqref{eq:genern} respect to \(d\) one finds
		\ba\label{eq:gpd}
		\nonumber
		\mathcal{P}_2(d,u)&\approx& {t}^{2}+{t}^{2}{u}^{2}d+ 
  {t}^{2}\left( {u}^{4}+1 \right) {d}^{2
		}+ {t}^{2}\left( {u}^{6}+{u}^{2} \right) {d}^{3}+{t}^{2} 
  \left( {u}^{8}+{u}^{4}+1 \right) {d}^{4}
		\\&&+
		{t}^{2}\left({u}^{10
		}+{u}^{6}+{u}^{2} \right) {d}^{5}
		+ {t}^{2}\left( {u}^{12
		}+{u}^{8}+{u}^{4}+1 \right) {d}^{6}+O \left( {d}^{
			7} \right),
		\ea
		the coefficient of $d^{p-1}$ indicates the generating function for \(\ker(\mathsf{L}_{{\M}_p}).\) 
  See \S\ref{subsec:examples} for the interpretation of these terms in terms of concrete functions in the normal form.
  A given term in the expansion is of the form \(t^2(u^2d)^{k}d^{2l}\), representing \(\tau^{2l}_p=(x_p,x_p)^{(2l)}\) with eigenvalue \(2k\) in dimension \(k+2l+2\).
  \end{proof}
	
	\section{General computations and examples}\label{sec:3d}	
 Let us now illustrate the approach outlined in Section 2 with examples.
\subsection{$3$D normal form}
In this section, we intend to calculate all transformations and find the quadratic normal form for the feed forward network within three dimensions. We apply the triangular \(\Sl\)-style given in Section \ref{sec:algorithm}.

Consider the following three-dimensional nilpotent  feed forward system:
\begin{equation} \label{eq:3Dquadratic}
v^{[0]}= \left\{ 
    \begin{array}{l} 
        \dot{x_{1}} = 
        {{x_{2}}} + \epsilon a_{1} {x_{1}} + \epsilon a_{2} {{x_{2}}} + \epsilon a_{3} {{x_{3}}} + a_{4} {{x_{2}}} {{x_{3}}} + a_{5} x_2^{2} + a_{6} x_3^{2} + a_{7} {x_{1}} {{x_{3}}} + a_{8} x_1^{2} + a_{9} {x_{1}} {{x_{2}}}, \\ 
        \dot{x_{2}} = {{x_{3}}} + \epsilon b_{2} {{x_{3}}} + \epsilon b_{1} {{x_{2}}} + b_{3} {{x_{2}}} {{x_{3}}} + b_{4} x_2^{2} + b_{5} x_3^{2}, \\
        \dot{x_{3}} = \epsilon c_{1} {{x_{3}}} + c_{2} x_3^{2},
    \end{array} 
\right.
\end{equation}

	or equivalently,
\bas
		\begin{pmatrix}
			{\dot x}_1
			\\
			{\dot x}_2
			\\
			{\dot x}_3
		\end{pmatrix}&=&
		\begin{pmatrix}
			\epsilon  a_{1} & 1+\epsilon a_2 & \epsilon  a_{3} 
			\\
			0 & \epsilon  b_{1} & 1+\epsilon b_2
			\\
			0 & 0& \epsilon  c_{1} 
		\end{pmatrix}
		\begin{pmatrix}
			x_1
			\\
			x_2
			\\
			x_3
		\end{pmatrix}+\begin{pmatrix}
			a_{4} {{x_{2}}} {{x_{3}}}+a_{5} x_2^{2}+a_{6} x_3^{2}+a_{7} {x_{1}} {{x_{3}}}+a_{8} x_1^{2}+a_{9} {x_{1}} {{x_{2}}}
			\\
		b_{3} {{x_{2}}} {{x_{3}}}+b_{4} x_2^{2}+b_{5} x_3^{2} 
			\\
		c_{2} x_3^{2}
		\end{pmatrix},
		\eas
	in which \(a_i,b_i\) and \(c_i\)  are given  real coefficients  and \(\epsilon\) is  a small  parameter.  To transform the vector field to normal form, we are taking the following steps:
	\begin{itemize}
		\item (Constant normal form). The parameter \(\nu \frac{\partial }{\partial {x_{3}}}\) represents the zero-order normal form; see Lemma \eqref{lem:Cons}. Note that the zero-level transformations do not change the architecture of the original system. 
		\item (Linear normal form).
		Define, 
		\bas 
		\mathsf{A}(\epsilon) =\begin{pmatrix}
			\epsilon  a_{1} & 1+\epsilon a_2 & \epsilon  a_{3} 
			\\
			0 & \epsilon  b_{1} & 1+\epsilon b_2
			\\
			0 & 0 & \epsilon  c_{1} 
		\end{pmatrix},
		\eas
		where \(\mathsf{A}(0)=\mathsf{A}.\) Employing  this near identity transformation
		\bas 
		\mathsf{T}(\epsilon)=\begin{pmatrix}
			1 & \epsilon  t_{1} & 0 
			\\
			0 & 1 & 0 
			\\
			0 & 0 & 1 
		\end{pmatrix},
		\eas
  with \(t_1 \in \mathbb{R}\)
		 into the \(\mathsf{A}(\epsilon)\) we find 
		\bas 
		\mathsf{T}^{-1}(\epsilon) \mathsf{A}(\epsilon) \mathsf{T}(\epsilon)=
		\begin{pmatrix}
			\epsilon  a_{1} & \epsilon^{2} a_{1} t_{1}-\epsilon^{2} b_{1} t_{1}+\epsilon  a_{2}+1 & \epsilon  a_{3}-\epsilon  t_{1} \left(1+\epsilon  b_{2}\right) 
			\\
			0 & \epsilon  b_{1} & 1+\epsilon  b_{2} 
			\\
			0 & 0 & \epsilon  c_{1} 
		\end{pmatrix}.
		\eas
		By solving $( a_3 - t_1(\epsilon b_2 + 1) ) \epsilon$ respect to $t_1$ we find $t_1=\frac{a_{3}}{1+\epsilon  b_{2}}.$ Therefore, 
		\ba\label{eq:eq1}
		\mathsf{T}^{-1}(\epsilon) \mathsf{A}(\epsilon) \mathsf{T}(\epsilon)=\bar{\mathsf{A}}(\epsilon)=\begin{pmatrix}
			\epsilon  a_{1} & 1+
			\epsilon \delta
			& 0 
			\\
			0 & \epsilon  b_{1} & 1+\epsilon  b_{2}
			\\
			0 & 0 & \epsilon  c_{1} 
		\end{pmatrix},
		\ea
		where \[\delta=a_2+
  \frac{\epsilon a_3\left(a_1-b_1\right)}{1+\epsilon\,b_2}
.\]  
For a study of versal deformations of non semisimple linear systems, refer to \cite{mokhtari2019versal}.
 As expected, the normal form is generated by those elements (or terms) that belong to the
  \( {\ker}(\mathsf{L}_{{\M}_p}), 
  \mbox{for all}\, p=1,2,3,\) we have
  
  \[	{\ker}(\mathsf{L}_{{\M}_1})=0,\,\,\,	{\ker}(\mathsf{L}_{{\M}_2})={x_{2}} \frac{\partial }{\partial {x_{3}}},
  \mbox{and}\,\,\, {\ker}(\mathsf{L}_{{\M}_3})={x_{1}}\frac{\partial }{\partial {x_{2}}}+2{x_{2}} \frac{\partial }{\partial {x_{3}}}.\]
  Hence the linear generators are \({x_{3}} \frac{\partial }{\partial {x_{3}}}, {x_{2}} \frac{\partial }{\partial {x_{2}}},
  \) and \(
  {x_{1}} \frac{\partial }{\partial {x_{1}}}.\)
Now, the linear part is in versal deformation, by applying the transformation \(\mathsf{T}(\epsilon).\) Before simplifying the quadratic terms we need to apply the linear transformation \(\mathsf{T}(\epsilon)\) to the nonlinear part of \eqref{eq:3Dquadratic},
 
	\ba\label{eq:L1}
		\begin{pmatrix}
			{x_{1}}
			\\
			{x_{2}}
			\\
			{x_{3}}
		\end{pmatrix}
		\mapsto\begin{pmatrix}
			{x_{1}} +\frac{\epsilon  a_{3} {x_{2}}}{1+\epsilon  b_{2}} 
			\\
			{x_{2}}
			\\
			 {x_{3}}
		\end{pmatrix},
		\ea
  or equivalently we apply this transformation:
  \bas
  Y_1=\frac{\epsilon a_3 {x_{2}}}{1+\epsilon b_2} \frac{\partial }{\partial {x_{1}}}.
  \eas
  We find

		\ba\label{eq:3DFL}
  \nonumber
		v^{[1]}=v^{[0]}+[Y_1,v^{[0]}]&=&\left((1+\epsilon \delta) {x_{2}}+\epsilon a_1{x_{1}}
		+a^{(1)}_{4}  {x_{2}} {x_{3}}+a^{(1)}_{5} x_2^2+
		a^{(1)}_{6} x_3^{2}+a_{7} {x_{1}} {{x_{3}}}+a_{8} x_1^{2}+a^{(1)}_{9} {x_{1}} {{x_{2}}}\right)\frac{\partial }{\partial {x_{1}}}
  \nonumber
		\\&&
		+\left((1+\epsilon b_2){x_{3}}+\epsilon b_1 {x_{2}}+b_3{x_{2}} {x_{3}}+b_4 x_2^{2}+b_5 x_3^{2}\right) \frac{\partial }{\partial {x_{2}}}
  \nonumber
  \\&&
  +
		\left(\nu+\epsilon c_1 {x_{3}}+c_2x_3^{2}\right)\frac{\partial}{\partial {x_{3}}},
		\ea
 where  the coefficients of the original system are updated to the following
		\ba\label{eq:coeffnf1}
  \nonumber
		a^{(1)}_{4} &=& a_4+
 \frac{\epsilon a_3\left(a_7-b_3\right)}{1+\epsilon\,b_2},
		\\\nonumber
		a^{(1)}_{5}&=&a_5 +\frac{\epsilon a_3\left(a_9-b_4\right)}{1+\epsilon\,b_2}+{\frac {{
\epsilon}^{2}a_3^{2}a_8}{ \left(1+\epsilon\,b_2
 \right) ^{2}}},
 \\\nonumber
a^{(1)}_{6}&=& a_6-{\frac {\epsilon b_5\,a_3}{1+\epsilon\,b_2}},
		\\
		a^{(1)}_{9}&=&
	a_9+{\frac {2\epsilon\,a_3a_8}{1+\epsilon\,b_2}}.
		\ea
	
\item
   (Quadratic normal form of the second component).
		In the next step, we intend to find the quadratic normal form:
as our algorithm suggests, we eliminate the following terms from the second equation which are $\langle {x_{2}}{x_{3}}, x_2^{2}\rangle\frac{\partial}{\partial {x_{2}}}$ by the following transformation. 
		
		\bas
		Y_2&:=&\left(\frac{b_{5} {x_{2}} {x_{3}}}{1+\epsilon  b_{2}}+\frac{\left(b_{3}+\epsilon  b_{2} b_{3}-\epsilon b_{5} c_{1}\right) x_2^{2}}{2 \left(1+\epsilon  b_{2}\right)^{2}}\right) \frac{\partial}{\partial {x_{2}}}.
		\eas
		Hence, 
		\bas 
		{v}^{[2]}&=&{\exp\rm ad}(Y_2)(v^{[1]})=v^{[1]}+[Y_2,v^{[1]}]+\cdots
  \\
  &=&
  \left(\epsilon a_1{x_{1}}+(1+\epsilon \delta) {x_{2}}
		+a^{(2)}_{4}  {x_{2}} {x_{3}}+a^{(2)}_{5} x_2^2+
		a^{(1)}_{6} x_3^{2}+a_{7} {x_{1}} {{x_{3}}}+a_{8} x_1^{2}+a^{(1)}_{9} {x_{1}} {{x_{2}}}\right)\frac{\partial }{\partial {x_{1}}}
		\\&&
		+\left(\epsilon b_1 {x_{2}}+(1+\epsilon b_2){x_{3}}+b^{(2)}_4 x_2^{2}\right) \frac{\partial }{\partial {x_{2}}}+
		\left(\nu+\epsilon c_1 {x_{3}}+c_2x_3^{2}\right) \frac{\partial }{\partial {x_{3}}}+\cdots,
		\eas

  with the following coefficients:
  \ba
  \nonumber
  a_4^{(2)}&=&a_4+{\frac {\delta\,\epsilon\,b_5}{1+\epsilon\,b_2}}+{
\frac {b_5}{1+\epsilon\,b_2}},
\\\nonumber
  a_5^{(2)}&=&
 {\frac { \left( \delta\,b_2b_3-\delta\,b_5c_{{1}}+2\,
a^{(1)}_5{b_2}^{2} \right) {\epsilon}^{2}+ \left( \delta\,b_3-b
_5c_{{1}}+ \left( 4\,a^{(1)}_5+b_3 \right) b_2 \right) 
\epsilon+2\,a^{(1)}_5+b_3}{ 2\left( 1+\epsilon\,b_2 \right) ^{2}
}},
\\\label{eq:b2}
  b^{(2)}_4&=&
  -{\frac {\left({\epsilon}^{2} b_2b_{{1}}b_3
-{\epsilon}^{2}b_{{1}}b_5c_{{1}}-2{\epsilon}^{2}\,{b_2}^{2}b_{
{4}}+\epsilon\,b_{{1}}b_3-4\,\epsilon\,b_2b_4-2\,b_4
 \right) }{2 \left( 1+\epsilon\,b_2 \right) ^{2}}}.
  \ea
\item
 (Quadratic normal form of the first component).
		In the last step, we put the first component in the normal form, which is the elimination of these terms:
		$\langle {x_{2}} {x_{3}}, x_1^{2}, {x_{1}} {x_{2}} 
		\rangle
		\frac{\partial }{\partial {x_{1}}}$
		by this transformation:
		\bas 
		Y_3=\left( \alpha_{{1}}x_2x_3+\alpha_2x_2^{2}+\alpha_3x_1^{2
}+\alpha_4x_{{1}}x_2 \right)
\frac{\partial }{\partial {x_{1}}},
		\eas
		with the following coefficients:
		\bas 
		\alpha_1&=&\frac{a^{(1)}_6}{1+\epsilon\,b_{2}},
		\\
		\alpha_2&=&-\frac{1}{2}\,
  \frac{-\epsilon\,a_1 a^{(1)}_6+\epsilon\,a^{(1)}_6 b_{{1}}+\epsilon\,a^{(1)}_6 c_1-\epsilon\,a^{(2)}_4 b_2-a^{(2)}_4}
{ {\left( 1+\epsilon\,b_2 \right)}^{2}},
		\\
        \alpha_{3}&=&-\frac{1}{2}\frac{\epsilon\,b_{1}\alpha_{4}-a^{(1)}_9}{1+\,\epsilon\,\delta},
  \\
  \alpha_4 &=&
  \frac {{\epsilon}^{2}a_1^{2}a^{(1)}_6-3\,{\epsilon}^{2}a_{{1}}a^{(1)}_6b_{{1}}-{\epsilon}^{2}a_{{1}}a^{(1)}_6c_{{1}}+{\epsilon}^{2}a_{{1
}}a^{(2)}_4b_2+{\epsilon}^{2}a_7{b_2}^{2}+2\,{\epsilon}^{2}
a^{(1)}_6{b_{{1}}}^{2}+2\,{\epsilon}^{2}a^{(1)}_6b_{{1}}c_{{1}}-2\,{
\epsilon}^{2}a^{(2)}_4b_{{1}}b_2}{
 {\left( 1+\epsilon\,b_2 \right)}^{2} 
 \left(3+\epsilon\delta+\epsilon\,b_2 \right)}
\\
&&
+\frac{a_7+2\,a^{(2)}_5+2\,{\epsilon}^{2}a^{(2)}_5{b_2}
^{2}+\epsilon\,a_{{1}}a^{(2)}_4+2\,\epsilon\,a_7b_2-2\,\epsilon
\,a^{(2)}_4b_{{1}}+4\,\epsilon\,a^{(2)}_5b_2}{
 \left( 1+\epsilon\,b_2 \right)^{2}\left(3+\epsilon\delta+\epsilon\,b_2\right ) }.
		\eas
	Note that, for removing these three terms the first three transformations are enough. We use the last one to generate \((2{x_{1}}{x_{3}}-x_2^2)\frac{\partial }{\partial {x_{1}}}\) in normal form. 
		The third level normal form is given by:
		\bas 
{ v}^{[3]}&=&{\rm exp}({\rm ad})_{\rm Y_3}(v^{[2]})=v^{[2]}+[Y_3,v^{[2]}]
\\
&=&
\left(\epsilon a_1{x_{1}}+(1+\epsilon \delta) {x_{2}}
		+a^{(3)}_{5} x_2^2+
		a^{(3)}_{7} (x_2^{2}-2{x_{1}} {{x_{3}}})\right)\frac{\partial }{\partial {x_{1}}}
		\\&&
		+\left(\epsilon b_1 {x_{2}}+(1+\epsilon b_2){x_{3}}+b^{(2)}_4 x_2^{2}\right) \frac{\partial }{\partial {x_{2}}}+
		\left(\nu+\epsilon c_1 {x_{3}}+c_2x_3^{2}\right) \frac{\partial }{\partial {x_{3}}},
\eas
with 
\ba
\nonumber
a^{(3)}_{5}&=&\frac{1}{2}{\frac {  {\epsilon}^{2}a_{{1}}b_{{1}}\alpha_4+2\,\epsilon\,
\delta\,a_8-\epsilon\,a_{{1}}a^{(1)}_9+2\,a_8  }{1+\epsilon\,\delta}},
\\\label{eq:nfcoeff}
a^{(3)}_{7}&=&-\frac{1}{2}
\left(\epsilon\,b_2\alpha_4+\alpha_4-a_7\right),
\ea
where  \(a^{(1)}_9 \) is given by \eqref{eq:coeffnf1}. 
	\end{itemize}
Finally, one can use the following scaling to  translate  off-diagonal entries of  \(\bar{\mathsf{A}}(\epsilon)\) to \(1.\)
\bas
x &\mapsto&(1+\epsilon \delta) (1+\epsilon b_2) x,
\\
y&\mapsto &(1+\epsilon b_2) y.
\eas

	\begin{theorem}
 The quadratic  triangular \(\Sl\)-style normal form of 

\bas
		v^{[0]}=	\left\{ \begin{array}{l} \dot {x}_{1} = 
		{{x_{2}}}+
	\epsilon a_{1} {x_{1}}+\epsilon a_{2} {{x_{2}}}+\epsilon a_{3} {{x_{3}}}+a_{4} {{x_{2}}} {{x_{3}}}+a_{5} x_2^{2}+a_{6} x_3^{2}+a_{7} {x_{1}} {{x_{3}}}+a_{8} x_1^{2}+a_{9} {x_{1}} {{x_{2}}},\\ 
				\dot {x}_{2} ={{x_{3}}}+\epsilon b_{2} {{x_{3}}}+\epsilon b_{1} {{x_{2}}}+b_{3} {{x_{2}}} {{x_{3}}}+b_{4} x_2^{2}+b_{5} x_3^{2},\\
   	\dot {x}_{3} = \epsilon c_{1} {{x_{3}}}+c_{2} x_3^{2},
			\end{array} \right.
   \eas
		
  is 
  \bas
		v^{[3]}=	\left\{ \begin{array}{l} \dot {x}_{1} = 
		\epsilon a_1{x_{1}}+(1+\epsilon \delta) {x_{2}}
		+a^{(3)}_{5} x_2^2-
		a^{(3)}_{7} (2{x_{1}}{x_{3}}-x_2^2),\\ 
				\dot {x}_{2} =\epsilon b_1 {x_{2}}+(1+\epsilon b_2){x_{3}}+b^{(2)}_4 x_2^{2},\\
   	\dot {x}_{3} =\nu+\epsilon c_1 {x_{3}}+c_2x_3^{2},
			\end{array} \right.
   \eas
  or equivalently 
  \bas
  \begin{pmatrix}
			{\dot x}_1
			\\
			{\dot x}_2
			\\
			{\dot x}_3
		\end{pmatrix}=
  \begin{pmatrix}
			0
			\\
			0
			\\
			\nu
		\end{pmatrix}+
		\begin{pmatrix}
			\epsilon  a_{1} & 1+
			\epsilon \delta
			& 0 
			\\
			0 & \epsilon  b_{1} & 1+\epsilon  b_{2}
			\\
			0 & 0 & \epsilon  c_{1} 
		\end{pmatrix}
		\begin{pmatrix}
			{x_{1}}
			\\
			{x_{2}}
			\\
			{x_{3}}
		\end{pmatrix}+\begin{pmatrix}
		a^{(3)}_{5} x_2^2-
		a^{(3)}_{7} (2{x_{1}}{x_{3}}-x_2^2)
			\\
 b_4^{(2)} x_2^{2}
			\\
	 c_2x_3^{2}
		\end{pmatrix},
  \eas
	where the coefficients  \( a^{(3)}_{5}, a^{(3)}_{7},\) and \( b_4^{(2)}\) are   introduced  in  \eqref{eq:nfcoeff} and \eqref{eq:b2} respectively.
	\end{theorem}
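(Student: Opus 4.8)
The plan is to apply the triangular $\Sl$-style algorithm of Theorem~\ref{thm:sl2triple} directly, normalizing $v^{[0]}$ in four successive stages: the constant part, the linear part, the quadratic part of the $\partial/\partial x_2$ component, and finally the quadratic part of the $\partial/\partial x_1$ component. At each stage the surviving terms are those in the complement $\mathcal{C}_p$, i.e. the quadratics in $\ker(\mathsf{L}_{\M_p})\mid\mathfrak{r}^p$, which by Lemma~\ref{lem:trans} are spanned by the transvectants $\tau^k_p$; every term in the image $\mathsf{L}_{\N_p}(\mathfrak{r}^p)$ is cleared by an explicit near-identity generator acting through $\exp(\ad)$. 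Corollary~\ref{cor:Lie} guarantees that $\ad_{\N}$ respects the filtration $\mathfrak{t}^1\subset\mathfrak{t}^2\subset\mathfrak{t}^3$, so the reductions may be carried out sequentially from the bottom component upward without a later step disturbing a component already in normal form.

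First I would handle the low-order terms. By Lemma~\ref{lem:Cons} each constant $\nu_j\,\partial/\partial x_j$ with $j\le 2$ is a bracket $[\N,-\nu_j\,\partial/\partial x_{j+1}]$, so only $\nu\,\partial/\partial x_3$ survives. For the linear part I would conjugate $\mathsf{A}(\epsilon)$ by the feed forward matrix $\mathsf{T}(\epsilon)$ with $t_1=a_3/(1+\epsilon b_2)$, eliminating the $(1,3)$ entry to obtain the versal form $\bar{\mathsf{A}}(\epsilon)$ of \eqref{eq:eq1} with $\delta=a_2+\epsilon a_3(a_1-b_1)/(1+\epsilon b_2)$. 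Since this conjugation also acts on the nonlinear terms, I would realize it as the generator $Y_1=\tfrac{\epsilon a_3 x_2}{1+\epsilon b_2}\,\partial/\partial x_1\in\mathfrak{\g}^1$ and compute $\exp(\ad_{Y_1})v^{[0]}$ modulo cubics, obtaining the updated coefficients $a^{(1)}_4,\dots,a^{(1)}_9$ of \eqref{eq:coeffnf1}.

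Next I would normalize the quadratics one component at a time. For $p=2$ one has $n-p=1$, so the only surviving even transvectant is $\tau^0_2=x_2^2$ (and $\tau^1_2=0$); hence $x_2x_3$ and $x_3^2$ must be cleared, and I would exhibit $Y_2\in\mathfrak{\g}^2$ whose bracket with $\bar{\mathsf{A}}(\epsilon)$ cancels them, yielding $b^{(2)}_4$ of \eqref{eq:b2}. For $p=1$ one has $n-p=2$, so by Lemma~\ref{lem:trans} the nonzero even transvectants $\tau^0_1$ and $\tau^2_1=2x_1x_3-x_2^2$ span a two-dimensional space of normal-form quadratics. I would then build $Y_3\in\mathfrak{\g}^1$ carrying four free coefficients $\alpha_1,\dots,\alpha_4$, using them to clear the complementary monomials $x_2x_3$, $x_1^2$, $x_1x_2$ and to fix the surviving representatives, which gives $a^{(3)}_5$ and $a^{(3)}_7$ of \eqref{eq:nfcoeff}. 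A concluding diagonal scaling rescales the off-diagonal linear entries of $\bar{\mathsf{A}}(\epsilon)$ to $1$, delivering the stated form $v^{[3]}$.

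The conceptual skeleton is furnished entirely by Theorems~\ref{thm:sl2triple}--\ref{thm:ndnf} and Lemma~\ref{lem:trans}; the real labor, and the main obstacle, is the coefficient bookkeeping. Each $\exp(\ad_{Y_i})$ feeds into the next stage, so the coefficient maps compose nonlinearly and the closed forms in \eqref{eq:coeffnf1}, \eqref{eq:b2}, \eqref{eq:nfcoeff} acquire denominators that are powers of $1+\epsilon b_2$ together with, for $Y_3$, the factor $3+\epsilon\delta+\epsilon b_2$. Three points require care: (i) each $Y_i$ must lie in the correct piece $\mathfrak{\g}^i$ so the feed forward architecture is preserved throughout; (ii) Theorem~\ref{thm:Lop} must be used to confirm that on each component $\ad_{\N}$ reduces to the operator $\mathsf{L}_{\N}$, so that image and kernel split as claimed; and (iii) because the homological operator here is $\ad_{\bar{\mathsf{A}}(\epsilon)}$ rather than merely $\ad_{\N}$, the parameter-dependent diagonal enlarges the removable space (this is what lets $x_1^2$ be eliminated in the last step), so one must track the resonance conditions and verify that $1+\epsilon b_2$ and $3+\epsilon\delta+\epsilon b_2$ are units in $\mathbb{R}[[\epsilon]]$ for small $\epsilon$, ensuring each $Y_i$ is well defined and the composite is a formal invertible feed forward transformation.
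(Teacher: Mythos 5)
Your skeleton---constant part via Lemma \ref{lem:Cons}, linear part via $\mathsf{T}(\epsilon)$ realized as the generator $Y_1$, quadratic normalization of the second and then the first component by $Y_2$ and $Y_3$, and a final scaling---is exactly the paper's proof, and your constant, linear and $p=2$ stages (clearing $x_2x_3$ and $x_3^2$, keeping $x_2^2$) are correct. The genuine gap is in your point (iii), where you assert that the parameter-dependent diagonal ``lets $x_1^2$ be eliminated in the last step.'' This is false, and it contradicts your own correct statement two sentences earlier that $\tau^0_1=x_1^2$ spans part of the normal-form complement. By the splitting in Theorem \ref{thm:sl2triple}, $x_1^2\in\ker(\mathsf{L}_{\M_1})$ is transverse to $\mathrm{im}\,\mathsf{L}_{\N_1}$, and enlarging the generators by admissible second-component pieces only adds $\ker(\mathsf{L}_{\N}\mid\mathfrak{r}^2)$, whose quadratic part $\langle x_3^2\rangle$ already lies in the image; so at $\epsilon=0$ the class of $x_1^2\frac{\partial}{\partial x_1}$ is nonzero in the quotient. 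The $\epsilon$-diagonal cannot rescue this: the bracket of $\beta x_1^2\frac{\partial}{\partial x_1}$ with the linear part produces $\epsilon a_1\beta\, x_1^2\frac{\partial}{\partial x_1}+2(1+\epsilon\delta)\beta\, x_1x_2\frac{\partial}{\partial x_1}$ (up to sign), and no other generator produces $x_1^2$ at all, so killing $a_8x_1^2$ would force $\beta=-a_8/(\epsilon a_1)$. Unlike $1+\epsilon b_2$ and $3+\epsilon\delta+\epsilon b_2$, the factor $\epsilon a_1$ is not a unit in $\mathbb{R}[[\epsilon]]$, so the required transformation is not an admissible formal family; a proof built on (iii) fails at exactly this step.

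What the paper's computation actually does (and what your proof must do) is the following: $\alpha_1$ and $\alpha_2$ remove $x_3^2$ and $x_2x_3$, $\alpha_3 x_1^2\frac{\partial}{\partial x_1}$ removes the $x_1x_2$ term (its bracket with $\N$ is $-2\alpha_3x_1x_2\frac{\partial}{\partial x_1}$ modulo $O(\epsilon)$), and $\alpha_4 x_1x_2\frac{\partial}{\partial x_1}$, which generates $-(x_2^2+x_1x_3)$ modulo $O(\epsilon)$, shifts the $\{x_2^2,x_1x_3\}$-part onto the line spanned by the transvectant $2x_1x_3-x_2^2$. So the cleared monomials are $x_3^2,\,x_2x_3,\,x_1x_2$ and one combination of $x_2^2,\,x_1x_3$---not the list ``$x_2x_3,\,x_1^2,\,x_1x_2$'' that you copied from a typo in the paper's prose. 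The term $x_1^2$ survives, with coefficient $a^{(3)}_5=a_8-\epsilon a_1\alpha_3$, which equals $a_8$ at $\epsilon=0$; that is, $a^{(3)}_5$ in \eqref{eq:nfcoeff} is the coefficient of the surviving $\tau^0_1=x_1^2$, and the factor ``$x_2^2$'' attached to it in the theorem's display is itself a typo, as one also sees by comparing with Theorem \ref{thm:ndnf} and the $3$D example \eqref{eq:nf3}, both of which retain $x_1^2$. The correct statement, and the one the paper's proof establishes, has $a^{(3)}_5x_1^2$ in place of $a^{(3)}_5x_2^2$; your argument should keep that term rather than invent a mechanism to remove it.
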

	\begin{remark}

 If one has a transvectant, one can choose any of these terms as the term in the normal form, depending on one's preference. For example, one can replace $(x_2^{2}-2{x_{1}} {{x_{3}}})$ with either ${x_{1}} {x_{3}}$ or $x_2^2,$ but not with \(\mathsf{L}_{\N}({x_{1}}{x_{2}})=x_2^2+{x_{1}}{x_{3}}.\) While this works for low degrees, it is not so easy to do this consistently for higher degrees.
		
	\end{remark}

	\subsection{Examples}\label{subsec:examples}
In this section, we present normal forms using the triangular \(\Sl\)-style, which was introduced earlier, for feed forward systems with dimensions ranging from \(2\) to \(7\), all of which exhibit near-nilpotent singularities. Mainly, we provide the symbolic version of normal forms and utilize Theorem \eqref{thm:gen} and Corollary \eqref{cor:app} to validate the normal form. In these examples, \(a^i_j\) for all \(i = 1, \ldots, 4\) and \(j = 1, \ldots, 7\) are the coefficients of the normal forms, which are functions of the coefficients of the original system.

		\begin{itemize} \label{eq:nf2d}
  \item \(2\)D.
			A simple example is  the   nilpotent feed forward:
			\ba\label{eq:n2}
			\left\{ \begin{array}{l} \dot {x}_{1} = 
				{x_{2}}+ F_1({x_{1}},{x_{2}}),\\ 
				\dot {x}_{2} =F_2({x_{2}}).
			\end{array} \right.
   \ea
   The \(\Sl\)-triple in the first component is given by:
   \ba\label{eq:sl2v2}
   \nonumber
   {\N}&=& {x_{2}} \frac{\partial}{\partial {x_{1}}},
   \\\nonumber
   {\M}&=& {x_{1}} \frac{\partial}{\partial {x_{2}}},
   \\
    {\H}&=&  {x_{1}} \frac{\partial}{\partial {x_{1}}}-{x_{2}} \frac{\partial}{\partial {x_{2}}}.
   \ea
	
   Following  Theorem \eqref{thm:gen} the generating function of normal form  for \(2\)D feed forward system  is \(t^2u^2.\)  Which is equal to \(\mathcal{P}^{1,1}_2(u).\)
   Hence,  the quadratic term
   in \({\ker}(\mathsf{L}_{{\M}})\) is \(\tau^0_1=(x_1,x_1)^{(0)}=x_1^2\) with \(\omega_{\tau^0_2}=2.\) 
   The normal form is
			\bas
			\left\{ \begin{array}{l} \dot {x}_{1} = 
				\mu_1 {x_{1}}+{x_{2}}+ a^1_1x_1^2,\\ 
				\dot {x}_{2} =\nu+\mu_2 {x_{2}}+a^2_1x_2^2.
			\end{array} \right.
			\eas

   \item \(3\)D. This case is already treated in the former subsection. 
				\ba\label{eq:3d}
				\left\{ \begin{array}{l} 
					\dot {x}_{1}=  {x_{2}}+ F_1({x_{1}},{x_{2}},{x_{3}}),
					\\
					\dot {x}_{2}= {x_{3}}+ F_2({x_{2}},{x_{3}}),
					\\
					\dot {x}_{3}= F_3({x_{3}}).
				\end{array} \right.
				\ea
		The \(\Sl\)-triple  in the  first  component is as 
				\bas 
				\N&=&  {x_{2}}\frac{\partial}{\partial {x_{1}}}+{x_{3}}\frac{\partial}{\partial {x_{2}}},
				\\
				\M&=& 2{x_{1}}\frac{\partial}{\partial {x_{2}}}+2 {x_{2}} \frac{\partial}{\partial {x_{3}}},
				\\
				\H&=& 2 {x_{1}}  \frac{\partial}{\partial {x_{1}}}-2 {x_{3}}  \frac{\partial}{\partial {x_{3}}}.
				\eas
		
				Using Lemma \ref{lem:trans} we find \(\tau^0_1\) and \(\tau^2_1\), associated with the term \(d^2\) \((1+u^4)t^2\) in \(\mathcal{P}_2(d,u)\),
			\bas 
			(x_1,x_1)^{(0)}=&\tau^0_1=x_1^2,\quad\quad\quad\quad\,\,\,
		\,\,\,\,&\omega_{\tau^0_1}=4,
			\\
			{(x_1,x_1)}^{(2)}=&\tau^2_1=2{x_{1}}{x_{3}}-x_2^2,\,\,\,
			\,\,\,\,&\omega_{\tau^2_1}=0.
			\eas
   The quadratic normal form is:
				\ba
				\left\{ \begin{array}{l} \dot {x}_{1} = 
					{x_{2}} +\mu_1 {x_{1}}+a^1_1 x_1^2+a^1_2(2{x_{1}} {x_{3}}-x_2^2),\\ 
					\dot {x}_{2} ={x_{3}}+\mu_2 {x_{2}}+a^2_1 x_2^2,
					\\\label{eq:nf3}
					\dot {x}_{3} =\nu+\mu_3 {x_{3}}+a^3_1x_3^2.
				\end{array} \right.
				\ea
	
				\item \(4\)D. Consider the following feed forward system
    \bas 
				\left \{ \begin{array}{l} 
					\dot {x}_{1}=  {x_{2}}+ F_1({x_{1}},{x_{2}},{x_{3}},x_4),
					\\
					\dot {x}_{2}= {x_{3}}+ F_2({x_{2}},{x_{3}},x_4),
					\\
					\dot {x}_{3}= x_4+ F_3({x_{3}},x_4),
     \\
     \dot x_4=F_4(x_4),
				\end{array} \right.
				\eas
    with the following \(\Sl\)-triple:
    \bas 
				\N&=&  x_2\frac{\partial}{\partial {x_{1}}}+x_3\frac{\partial}{\partial {x_{2}}}+x_4\frac{\partial}{\partial {x_{3}}},
				\\
				\M&=& 3\,x_{{1}}\frac{\partial}{\partial {x_{2}}}+4\,x_2\frac{\partial}{\partial {x_{3}}}+3\,x_3\frac{\partial}{\partial x_4},				\\
				\H&=&3\,x_{{1}}\frac{\partial}{\partial {x_{1}}}+x_2\frac{\partial}{\partial {x_{2}}}-
x_3\frac{\partial}{\partial {x_{3}}}-3\,x_4\frac{\partial}{\partial x_4}.
				\eas

   Using Lemma  \ref{lem:trans} we see that \(\tau^0_1\) and \(\tau^2_1\), associated to the \(d^3\)-term \(u^2d\cdot d^2t^2+ (u^2d)^3t^2=u^2(1+u^4)t^2 d^3\) in \(\mathcal{P}_2(d,u)\), which are defined by 
			\bas 
			(x_1,x_1)^{(0)}=\tau^0_1=&x_1^2,\,\,\,
			\,\,\,\,\,\,\,\,\qquad&\omega_{\tau^0_1}=6,
			\\
			(x_1,x_1)^{(2)}=\tau^2_1=&6{x_{1}}{x_{3}}-4x_2^2,\,\,\,
			&\omega_{\tau^2_1}=2.
			\eas
   The normal form is given by:
				\bas
				\left\{ \begin{array}{l}
					\dot {x}_{1} = {x_{2}}+\mu_1 {x_{1}}+a^1_1x_1^2+a^1_2(6{x_{1}}{x_{3}}-4x_2^2),
					\\
					\dot {x}_{2} = {x_{3}}+\mu_2 {x_{2}}+
					a^2_1 x_2^2+a^2_2(2{x_{2}} x_4-x_3^2),\\ 
					\dot {x}_{3} =x_4+\mu_3 {x_{3}}+a^3_1 x_3^2,
					\\ 
					\dot x_4 =\nu+\mu_4 x_4+a^4_1x_4^2.
				\end{array} \right.
				\eas
			\item \(5\)D.
			The \(\Sl\)-triple for dimension \(5\)
			\bas
			\N&=&\,x_2\frac{\partial}{\partial {x_{1}}}+x_3\,\frac{\partial}{\partial {x_{2}}}+\,x_4
			\frac{\partial}{\partial {x_{3}}}+x_5\frac{\partial}{\partial x_4},
			\\
			\M&=&4x_{{1}}\frac{\partial}{\partial {x_{2}}}+6x_2
			\,\frac{\partial}{\partial {x_{3}}}+6x_3\,\frac{\partial}{\partial x_4}+4x_{{
					4}}\,\frac{\partial}{\partial x_5},
			\\
			\H&=&4\,
			x_{{1}}\frac{\partial}{\partial {x_{1}}}+2x_2\,\frac{\partial}{\partial {x_{2}}}-2\,x_4\frac{\partial}{\partial x_4}-4\,x_5\frac{\partial}{\partial x_5}.
			\eas
			The generating function for  \({\ker}(\mathsf{L}_{{\M}})\) is given by
			\(
			(u^8+u^4+1)t^2
			\), which is equal to \(\mathcal{P}^{5,1}_2(u)\),
   see Theorem \ref{thm:gen}.
   Using Lemma  \ref{lem:trans} we see that 
   \({\ker}(\mathsf{L}_{{\M}})\) is  given by \(\tau^0_1,\tau^2_1\) and \(\tau^4_1\) which are defined by 
			\bas 
			(x_1,x_1)^{(0)}&=&\tau^0_1=x_1^2,\,\,\,\,\qquad\qquad\qquad\qquad
			\,\,\,\,\,\,\omega_{\tau^0_1}=8,
			\\
			(x_1,x_1)^{(2)}&=&\tau^2_1=12 x_{1} x_{3}-9 x_{2}^{2},\,\,\,\,\,\qquad\qquad
			\omega_{\tau^2_1}=4,
			\\
			(x_1,x_1)^{(4)}&=&\tau^4_1=2x_{{1}}x_5-2\,x_2x_4+\,x_3^{2},\,\,\,\,\,\quad
			\omega_{\tau^4_1}=0.
			\eas
		Hence, the normal form is
			\bas
			\left\{ \begin{array}{l}
				\dot {x}_{1}={x_{2}}+\mu_1 {x_{1}}+a^1_1 x_1^2+a^1_2
				(12{x_{1}}{x_{3}}-9x_2^2)+a^1_3
	\left(2x_{{1}}x_5-2\,x_2x_4+\,x_3^{2}\right),
				\\
				\dot {x}_{2} = {x_{3}}+\mu_2 {x_{2}}+a^2_1x_2^2+a^2_2(6{x_{2}}x_4-4x_3^2),
				\\
				\dot {x}_{3} = 
				x_4 +\mu_3{x_{3}}+a^3_1 x_3^2+a^3_2(2{x_{3}} x_5-x_4^2),\\ 
				\dot x_4 =x_5+\mu_4 x_4+a^4_1 x_4^2,
				\\ 
				\dot x_5 =\nu+\mu_5 x_5+a^5_1x_5^2.
			\end{array} \right.
			\eas

			\item \(6\)D.
			The \(\Sl\)-triple for dimension \(6\) is as 
			\bas 
			\N&=&\,x_2\frac{\partial}{\partial {x_{1}}}+\,x_3\frac{\partial}{\partial {x_{2}}}+\,x_4
			\frac{\partial}{\partial {x_{3}}}+\,x_5\frac{\partial}{\partial x_4}+x_6\frac{\partial}{\partial x_5},
			\\
			\M&=&
			5 x_{{1}}\frac{\partial}{\partial {x_{2}}}+8\,x_2\frac{\partial}{\partial {x_{3}}}+9\,x_3
			\frac{\partial}{\partial x_4}+8\,x_4\frac{\partial}{\partial x_5}+5\,x_5\frac{\partial}{\partial x_6},
			\\
			\H&=& 5\,x_{{1
			}}\frac{\partial}{\partial {x_{1}}}+3\,x_2\frac{\partial}{\partial {x_{2}}}+x_3\frac{\partial}{\partial {x_{3}}}-x_{{4}
			}\frac{\partial}{\partial x_4}-3\,x_5\frac{\partial}{\partial x_5}-5\,x_6\frac{\partial}{\partial x_6}.
			\eas
The invariants  give us a generating function
   \(t^2(u^{10}+u^{6}+u^2)\) see Theorem \ref{thm:gen}, which is equal to \(\mathcal{P}^{6,1}_2(u).\)
   Using Lemma \eqref{lem:trans} we observe  that  the quadratic terms in
   \({\ker}(\mathsf{L}_{{\M}})\)  are given by
			\bas 
			(x_1,x_1)^{(0)}=&\tau^0_1=x_1^2,\,\,\,
			\qquad\qquad\qquad\qquad\,\,\,\,\,\,\,\,\,\,\,\,\quad&\omega_{\tau^0_1}=10,
			\\
			(x_1,x_1)^{(2)}=&\tau^2_1=20{x_{1}}{x_{3}}-16x_2^2,\,\,\qquad\qquad\qquad
			&\omega_{\tau^2_1}=6,
			\\
			(x_1,x_1)^{(4)}=&\tau^4_1=10x_{{1}}x_5-16\,x_2x_4+9\,x_3^{2},\,
			\qquad&\omega_{\tau^4_1}=2,
			\eas
which generates the terms of normal form. Therefore,
			the normal form is 
			\bas
			\left\{ \begin{array}{l}
				\dot {x}_{1}={x_{2}}+\mu_1 {x_{1}}+a^1_1 x_1^2+a^1_2
				(20{x_{1}}{x_{3}}-16x_2^2)+a^1_3
				\left(10x_{{1}}x_5-16\,x_2x_4+9\,x_3^{2}\right),
				\\
				\dot {x}_{2}={x_{3}}+\mu_2 {x_{2}}+a^2_1 x_2^2+a^2_2
				(12{x_{2}}x_4-9x_3^2)+a^2_3
				\left(2x_2x_6-2\,x_3x_5+\,x_4^{2}\right),
				\\
				\dot {x}_{3} =x_4+\mu_3 {x_{3}}+a^3_1x_3^2+a^3_2(6{x_{3}}x_5-4x_4^2),
				\\
				\dot x_4 =
				x_5 +\mu_4 x_4+a^4_1 x_4^2+a^4_2(2x_4 x_6-x_5^2),\\ 
				\dot x_5 =x_6+\mu_5 x_5+a^5_1 x_5^2,
				\\ 
				\dot x_6 =\nu+\mu_6 x_6+a^6_1x_6^2.
			\end{array} \right.
			\eas

			\item \(7\)D.
			The \(\Sl\)-triple for dimension \(7\) is 
			\bas 
			{\N}&=&\,x_2\frac{\partial}{\partial {x_{1}}}+\,x_3\frac{\partial}{\partial {x_{2}}}+
			\,x_4
			\frac{\partial}{\partial {x_{3}}}+\,x_5\frac{\partial}{\partial x_4}+\,x_6\frac{\partial}{\partial x_5}+x_7\frac{\partial}{\partial x_6},
			\\
			{\M}&=&6x_{{1}}\frac{\partial}{\partial {x_{2}}}+10\,x_2\frac{\partial}{\partial {x_{3}}}+12\,x_3\frac{\partial}{\partial x_4}+12\,x_4\frac{\partial}{\partial x_5}+10\,x_5
			\frac{\partial}{\partial x_6}+6\,x_6\frac{\partial}{\partial x_7},
			\\
			\H&=&6\,x_{{1}}\frac{\partial}{\partial {x_{1}}}+4\,
			x_2\frac{\partial}{\partial {x_{2}}}+2\,x_3\frac{\partial}{\partial {x_{3}}}-2\,x_5\frac{\partial}{\partial x_5}
			-4\,x_6\frac{\partial}{\partial x_6}-6\,x_7\frac{\partial}{\partial x_7}.
			\eas
  The generating function for this dimension gives us
   \(t^2(u^{12}+u^{8}+u^4+1)\), see Theorem \ref{thm:gen}, which is equal to \(\mathcal{P}^{7,1}_2(u).\)
   Using Lemma \ref{lem:trans} we see that 
   \({\ker}(\mathsf{L}_{{\M}})\) is  given by \(\tau^0_1,\tau^2_1,\tau^4_1\) and 
   \(\tau^6_1\) which are defined by these transvectants  
			\bas 
			(x_1,x_1)^{(0)}=&\tau^0_1=x_1^2,\,\,\,\qquad\qquad\qquad\qquad\qquad\qquad\qquad
			&\omega_{\tau^0_1}=12,
			\\
				(x_1,x_1)^{(2)}=&\tau^2_1=20{x_{1}}{x_{3}}-16x_2^2,
			\,\,\,\quad\quad\quad\quad\quad\quad\quad\quad\,\,\,\,&\omega_{\tau^2_1}=8,
			\\	(x_1,x_1)^{(4)}=&\tau^4_1
   =30\,x_{{1}}x_5-60\,x_2x_4+36\,x_3^{2},\,\,\,
			\qquad\quad\quad&\omega_{\tau^4_1}=4,
			\\
			(x_1,x_1)^{(6)}=&\tau^6_1=
		2\,x_{{1}}x_7-2\,x_2x_6+2\,x_3x_5-x_4^{2},
\,\,\,\,\,\qquad&\omega_{\tau^6_1}=0.
			\eas
			The normal form is 
			\bas
			\left\{ \begin{array}{l}
				\dot {x}_{1}={x_{2}}+\mu_1 {x_{1}}+a^1_1 x_1^2+a^1_2
				(20{x_{1}}{x_{3}}-16x_2^2)+a^1_3
    \left(30\,x_{{1}}x_5-60\,x_2x_4+36\,x_3^{2}\right)
    \\\,\,\,\,\,\,\,\,\,\,\,\,+
a^1_4\left(2\,x_{{1}}x_7-2\,x_2x_6+2\,x_3x_5-x_4^{2}\right),
				\\
				\dot {x}_{2}={x_{3}}+\mu_2 {x_{2}}+a^2_1 x_2^2+a^2_2
				({x_{2}}x_4-x_3^2)+a^2_3
	\left(x_2x_6-4\,x_3x_5+3\,x_4^{2}\right),
				\\
				\dot {x}_{3}=x_4+\mu_3 {x_{3}}+a^3_1 x_3^2+a^3_2
				({x_{3}}x_5-x_4^2)+a^3_3
	\left(x_3x_7-4\,x_4x_6+3\,x_5^{2}\right),
				\\
				\dot x_4 =x_5+\mu_4 x_4+a^4_1x_4^2+a^4_2(x_4x_6-x_5^2),
				\\
				\dot x_5 = 
				x_6+\mu_5 x_5 +a^5_1 x_5^2+a^5_2(x_5 x_7-x_6^2),\\ 
				\dot x_6 =x_7+\mu_6 x_6+a^6_1 x_6^2,
				\\ 
				\dot x_7 =\nu+\mu_7 x_7+a^7_1x_7^2.
			\end{array} \right.
			\eas
	
		\end{itemize}

 \section{Outer normal form}\label{sec:orbital}

\subsection{General outer normal form theory for block-triangular systems} \label{sec:outerNF}
The orbital normal form is a type of normal form that allows the use of functions as transformations for further simplification in the vicinity of original fixed points. This method shares similarities with the right- or left-equivariant approach, as shown in \cite{golubitsky2012singularities,meyer1986singularities}. Previous works \cite{gazor2013normal,bogdanov1979local,bogdanov1976local,chen2005unique,strozyna2003orbital} have already utilized the orbital normal form.
In this paper, we extend the concept of orbital normal form to that of outer normal form for block-triangular systems.

In this section, we first demonstrate that the action of the orbital normal form can be represented. In the two subsequent subsections, we apply the orbital normal form to the feed forward system near the nilpotent. Notably, through this operator, we can eliminate all terms near the quadratic terms, significantly simplifying the expressions.

	The orbital normal form is well known in the normal form literature,
 but it seems to be lacking a Lie algebraic foundation. This is what we want to provide here.
 The proof will work for the general case, where one multiplies the whole equation with one function, but is written in such a way that we have more freedom in the triangular case, where we multiply each component with a different function.
This can be generalized to intermediate cases, which we call {\em block-triangular} as follows.
\def\g{x}

\subsubsection{Block-triangular systems and their properties}\label{sec:blocktriangular}
 Suppose we can write the vector fields \(X\) we are considering as
 \(\blockX=\sum_{i=1}^m \blockX^i\), where
 \bas
 \blockX^i=\sum_{j=p_i}^{p_{i+1}-1} X_j\partial^j\in\mathfrak{\g}^i,
 \eas
\(X_j\in\mathfrak{r}^i:=\mathbb{R}[x_{p_i},\cdots,x_n]\) for \(j=p_i,\ldots,p_{i+1}-1\), and \(1=p_1<\cdots<p_m<p_{m+1}=n+1\). 
We see that \(\mathfrak{r}^1\supset\cdots\supset\mathfrak{r}^{m}\).
If \(m=1\), then

 \bas
 \blockX^1=\sum_{j=1}^{n} X_j(x_{1},\cdots,x_n)\partial^j,
 \eas
 describing all polynomial vector fields on \(\mathbb{R}^n\). In this paper, we are interested in the case \(m=n\), that is, \(p_i=i, i=1,\ldots,n\). In this case, \(\mathfrak{\g}_i\) corresponds to \(\mathfrak{\g}^{i}\) in \S\ref{sec:algorithm}. So block-triangular describes both the ordinary differential equations, without restrictions, as well
 as the triangular systems.

Notice that, with mintropical addition \(i\oplus j=\min(i,j)\),
\(\mathfrak{r}^i\otimes\mathfrak{r}^j=\mathfrak{r}^{\min(i,j)}=\mathfrak{r}^{i\oplus j}\),
giving us a (tropical) grading on the \(\mathfrak{r}^i\). This generalizes Proposition \ref{pro:lie} to the case \(m\leq n-1\).

The commutator of \(\blockX^r\) and \(\blockY^s\) is
\bas
[\blockX^r,\blockY^s]&=&[\sum_{i=p_r}^{p_{r+1}-1} X_i(x_{p_r},\cdots,x_n)\partial^i,\sum_{j=p_s}^{p_{s+1}-1} Y_j(x_{p_s},\cdots,x_n)\partial^j]
\\&=&
\sum_{i=p_r}^{p_{r+1}-1}\sum_{j=p_s}^{p_{s+1}-1} \left( X_i\frac{\partial Y_j}{\partial x_i}\partial^j 
-Y_j\frac{\partial X_i}{\partial x_j}\partial^i\right).
\eas

Assume \(r<s\). Then \(i\leq p_{r+1}-1\leq p_s-1< p_s\).
Then \(\frac{\partial Y_j(x_{p_s},\cdots,x_n)}{\partial x_i}=0\) and
\bas
[\blockX^r,\blockY^s]&=&
-\sum_{i=p_r}^{p_{r+1}-1}\sum_{j=p_s}^{p_{s+1}-1} Y_j(x_{p_s},\cdots,x_n)\frac{\partial X_i(x_{p_r},\cdots,x_n)}{\partial x_j}\partial^i
\in\mathfrak{\g}^r.
\eas
In general, with mintropical addition \(r\oplus s=\min(r,s)\),
\bas
[\mathfrak{\g}^r,\mathfrak{\g}^s]\subset\mathfrak{\g}^{r\oplus s},
\eas
giving us a (tropical) grading on the \(\mathfrak{\g}^i\).

Let \(\mathfrak{t}^i=\bigoplus_{j=1}^i\mathfrak{\g}^j\). 
Then
\[[\mathfrak{t}^i,\mathfrak{t}^j]\subset\bigoplus_{k=1}^i \bigoplus_{l=1}^j [\mathfrak{\g}^k,\mathfrak{\g}^l]\subset 
\bigoplus_{k=1}^i \bigoplus_{l=1}^j \mathfrak{\g}^{\min(k,l)}=\bigoplus_{l=1}^{\min(i,j)}\mathfrak{\g}^l=\mathfrak{t}^{i\oplus j },\]
and, in particular \([\mathfrak{t}^m,\mathfrak{\g}^j]\in\mathfrak{t}^j\).
It is this property that is the main motivation for the definition of block-triangular.

We say that \(\mathfrak{t}^i\) is a Lie ring, with ring \(\mathfrak{r}^i=\mathbb{R}[x_{p_i},\ldots,x_n]\). 
We now introduce the
action of the ring \(\mathfrak{r}^i\) on \(\mathfrak{t}^i\) as follows.
Let \(r^i\in\mathfrak{r}^i\), \(\blockY^i\in\mathfrak{\g}^i\), and put
\bas
r^i \star  \blockY^i:=\sum_{j=p_i}^{p_{i+1}-1} r^i Y_j\partial^j\in\mathfrak{\g}^{i\oplus i}=\mathfrak{\g}^{i}.
\eas

Then, with \(\blockX\in\mathfrak{t}^m\) and \(\blockY^i\in\mathfrak{\g}^i\),  
one has with $[\blockX,\blockY^i]\in\mathfrak{t}^i$ and \(X(\mathfrak{r}^i)\subset \mathfrak{r}^i\),
\bas
[\blockX,r^i\star \blockY^i]&=&\sum_{l=1}^n\sum_{j=n-p_i}^{n-p_{i-1}-1}  \left( X_l\frac{\partial r^i Y_j}{\partial x_l}\partial^j-r^i Y_j\frac{\partial X_l}{\partial x_j}\partial^l\right)
\\&=&
\sum_{l=1}^n\sum_{j=n-p_i}^{n-p_{i-1}-1}  \left( X_l\frac{\partial r^i }{\partial x_l}Y_j\partial^j+r^iX_l\frac{\partial Y_j}{\partial x_l}\partial^j-r^i Y_j\frac{\partial X_l}{\partial x_j}\partial^l\right)
\\&=&
\sum_{l=1}^n X_l\frac{\partial r^i }{\partial x_l} \sum_{j=n-p_i}^{n-p_{i-1}-1}Y_j\partial^j+ r^i\sum_{l=1}^n\sum_{j=n-p_i}^{n-p_{i-1}-1}   \left( X_l\frac{\partial Y_j}{\partial x_l}\partial^j- Y_j\frac{\partial X_l}{\partial x_j}\partial^l\right)
\\&=&
\blockX(r^i)\star \blockY^i+r^i\star [\blockX,\blockY^i]\in\mathfrak{t}^i.
\eas

 The following theorem is a standard Lie algebraic result. 
 \begin{theorem}\label{thm:ro}
 Let \(\mathfrak{t}\) be a Lie algebra and  
 let \(\mathfrak{r}\) be a representation space of \(\mathfrak{t}\). Then $\mathfrak{g}=\mathfrak{r}\rtimes \mathfrak{t},$  with bracket 
    \bas
	[(r_1,{\blockX^{1}}),(r_2,{\blockX^{2}})]_{\mathfrak{g}}=\left({\blockX^{1}}r_2-{\blockX^{2}}r_1,[{\blockX^{1}},{\blockX^{2}}]_{\mathfrak{h}}\right),
	\eas
is the {\em(}trivial{\em)} abelian extension of \({\mathfrak{t}}\) by 
\({\mathfrak{r}}\), and
defines a Lie algebra.
 \end{theorem}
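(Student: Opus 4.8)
The plan is to verify directly that the bracket $[\cdot,\cdot]_{\mathfrak{g}}$ satisfies the three defining axioms of a Lie algebra — bilinearity, antisymmetry, and the Jacobi identity — and then to identify the resulting structure as the split (cocycle-free) abelian extension. Throughout I write the action of $X\in\mathfrak{t}$ on $r\in\mathfrak{r}$ simply as $Xr$, and I use that $\mathfrak{r}$ being a representation means the action is bilinear and satisfies $[X,Y]r = X(Yr)-Y(Xr)$.

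Bilinearity is immediate, since each of the two slots of the bracket is visibly bilinear in its arguments. For antisymmetry I would interchange the two inputs and observe that the first slot $X^1 r_2 - X^2 r_1$ changes sign, while the second slot $[X^1,X^2]_{\mathfrak{t}}$ changes sign because $\mathfrak{t}$ is already a Lie algebra; hence $[\cdot,\cdot]_{\mathfrak{g}}$ is antisymmetric.

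The real content is the Jacobi identity, which I would check componentwise on the cyclic sum $\sum_{\mathrm{cyc}}[[(r_1,X^1),(r_2,X^2)],(r_3,X^3)]$. In the $\mathfrak{t}$-slot the sum collapses to $\sum_{\mathrm{cyc}}[[X^1,X^2],X^3]$, which vanishes by the Jacobi identity in $\mathfrak{t}$. In the $\mathfrak{r}$-slot, the summand indexed by $(1,2,3)$ expands to $[X^1,X^2]r_3 - X^3 X^1 r_2 + X^3 X^2 r_1$; forming the full cyclic sum and collecting by the coefficient of each $r_i$ yields, for $r_1$, the combination $[X^2,X^3]r_1 - (X^2 X^3 - X^3 X^2)r_1$, which vanishes precisely because $\mathfrak{r}$ is a representation of $\mathfrak{t}$. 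The coefficients of $r_2$ and $r_3$ vanish by the identical mechanism under cyclic relabeling. This is the one step where a genuine hypothesis enters — the representation axiom is exactly what is required — and so it is the crux of the argument, although it is entirely routine once the cyclic sum is organized by the coefficients of the $r_i$.

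Finally, I would record the extension structure: the subspace $\{(r,0):r\in\mathfrak{r}\}$ is an ideal of $\mathfrak{g}$, it is abelian since $[(r_1,0),(r_2,0)]_{\mathfrak{g}}=(0,0)$, and the map $X\mapsto(0,X)$ splits the short exact sequence $0\to\mathfrak{r}\to\mathfrak{g}\to\mathfrak{t}\to0$. This exhibits $\mathfrak{g}$ as the trivial (i.e.\ no cocycle twist) abelian extension of $\mathfrak{t}$ by $\mathfrak{r}$, equivalently the semidirect product $\mathfrak{r}\rtimes\mathfrak{t}$, completing the proof.
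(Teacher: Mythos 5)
Your proof is correct and complete: the componentwise check of antisymmetry and of the Jacobi identity (where the representation axiom $[X,Y]r = X(Yr)-Y(Xr)$ is exactly what kills the cyclic sum in the $\mathfrak{r}$-slot), together with the identification of $\mathfrak{r}\times\{0\}$ as an abelian ideal split by $X\mapsto(0,X)$, is the canonical verification of the semidirect-product construction. Note that the paper itself supplies no proof of this theorem — it is introduced with the remark that it is ``a standard Lie algebraic result'' — so your write-up simply fills in the omitted standard details; there is no alternative argument in the paper to compare against. One small point: you silently corrected the subscript $[\blockX^1,\blockX^2]_{\mathfrak{h}}$ in the statement to $[\blockX^1,\blockX^2]_{\mathfrak{t}}$, which is indeed the intended reading (a typo in the paper, as $\mathfrak{h}$ is only introduced later in Theorem \ref{thm:oop}).
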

\subsubsection{Outer automorphisms}\label{sec:outomorphisms} 
The following theorem is exactly what one needs to do in normal form theory with {\em outer automorphisms}.
 Transformations are called inner when the generators are in the representation space, that is, they are coordinate transformations, otherwise, they are called outer. For instance, in the triangular case, completely general coordinate transformations could be seen as outer, as long as they produce a triangular normal form.
 It is not always clear that the outer is more than the inner; take, for example, simple Lie algebras (Whitehead lemmas). But the Lie algebras in nonlinear normal form theory are not simple.

The fact that orbital normal form (that is, with \(p=n\)) is working successfully in the cited literature, is an indication that outer is indeed often better.

\begin{theorem}\label{thm:oop}
Let \(\mathfrak{h}\subset\mathfrak{t}\) be an ideal and suppose \(\mathfrak{r}\) acts on \(\mathfrak{h}\) with the \(\star\)-action
and corresponding rule \bas [\blockX,r\star \blockY]=\blockX(r)\star \blockY +r \star [\blockX,\blockY]_\mathfrak{t}\in \mathfrak{h},\eas
where \((r,\blockX)\in \mathfrak{g}=\mathfrak{r}\rtimes \mathfrak{t}, \blockY\in \mathfrak{h}\), the semidirect product of \(
\mathfrak{r}\) and \(\mathfrak{t}\), with a given action of  \(\mathfrak{t}\) on \(\mathfrak{r}\), cf. {\em Theorem \ref{thm:ro}}.
Define 
\ba\label{eq:RHOOP}
\rho(r,\blockX)\blockY=r\star \blockY+[\blockX,\blockY]_{\mathfrak{t}}.
\ea
Then, \(\rho\)  is a representation of \(\mathfrak{g}\) in \(\mathfrak{h}.\)
 \end{theorem}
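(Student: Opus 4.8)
The plan is to verify directly that $\rho$ is a Lie algebra homomorphism from $\mathfrak{g}=\mathfrak{r}\rtimes\mathfrak{t}$ into the Lie algebra of linear operators on $\mathfrak{h}$, since linearity of $\rho$ in its argument is immediate from the definition \eqref{eq:RHOOP} and is the easy half of the claim. Concretely, for $(r_i,\blockX^i)\in\mathfrak{g}$ and $\blockY\in\mathfrak{h}$ I would establish
\[
\rho\big([(r_1,\blockX^1),(r_2,\blockX^2)]_{\mathfrak{g}}\big)\blockY
=\big(\rho(r_1,\blockX^1)\rho(r_2,\blockX^2)-\rho(r_2,\blockX^2)\rho(r_1,\blockX^1)\big)\blockY.
\]
For the left-hand side I would substitute the bracket of $\mathfrak{g}$ from Theorem \ref{thm:ro} and then the definition of $\rho$, obtaining
\[
(\blockX^1 r_2-\blockX^2 r_1)\star\blockY+[[\blockX^1,\blockX^2]_{\mathfrak{t}},\blockY]_{\mathfrak{t}}.
\]

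Next I would expand the operator commutator on the right. Applying $\rho(r_2,\blockX^2)$ and then $\rho(r_1,\blockX^1)$ produces four terms, and the decisive move is to rewrite the mixed term $[\blockX^1,r_2\star\blockY]_{\mathfrak{t}}$ by means of the compatibility (derivation) rule $[\blockX,r\star\blockY]_{\mathfrak{t}}=\blockX(r)\star\blockY+r\star[\blockX,\blockY]_{\mathfrak{t}}$ assumed in the hypothesis. Subtracting the same expansion with the indices $1$ and $2$ interchanged, three cancellations occur: the iterated terms $r_1\star(r_2\star\blockY)$ and $r_2\star(r_1\star\blockY)$ cancel because the $\star$-action is realized through commutative ring multiplication, so $r_1\star(r_2\star\blockY)=r_2\star(r_1\star\blockY)$; and the two cross terms $r_2\star[\blockX^1,\blockY]_{\mathfrak{t}}$ and $r_1\star[\blockX^2,\blockY]_{\mathfrak{t}}$ each appear in both expansions and cancel. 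What remains is precisely $(\blockX^1 r_2-\blockX^2 r_1)\star\blockY$ together with the nested bracket difference $[\blockX^1,[\blockX^2,\blockY]_{\mathfrak{t}}]_{\mathfrak{t}}-[\blockX^2,[\blockX^1,\blockY]_{\mathfrak{t}}]_{\mathfrak{t}}$.

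The last step is to invoke the Jacobi identity in $\mathfrak{t}$, which collapses the nested bracket difference into $[[\blockX^1,\blockX^2]_{\mathfrak{t}},\blockY]_{\mathfrak{t}}$; this matches the second term of the left-hand side and closes the identification. Along the way one uses that $\mathfrak{h}$ is an ideal of $\mathfrak{t}$ and is $\star$-stable, so every intermediate quantity genuinely lies in $\mathfrak{h}$ and the operators $\rho(r,\blockX)$ are well defined. The main obstacle is not conceptual but organizational: one must track the several $\star$-products and brackets carefully enough that the correct cancellations become visible. The two substantive ingredients are the derivation rule and the Jacobi identity, while the commutativity of the ring multiplication is exactly what removes the otherwise obstructive iterated $\star$-terms.
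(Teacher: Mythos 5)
Your proposal is correct and follows essentially the same route as the paper's own proof: direct verification of the homomorphism identity by expanding the operator commutator, cancelling the iterated $\star$-terms via commutativity of $\mathfrak{r}$, rewriting the mixed terms $[\blockX^i, r_j\star\blockY]_{\mathfrak{t}}$ with the derivation rule, and collapsing the nested brackets by the Jacobi identity. The cancellation pattern you describe matches the paper's computation step for step.
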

 \begin{proof}
 In order to prove that \(\rho\) is the representation of
 $\mathfrak{g}$ in \({\rm End{(\mathfrak{h}})}\),
 we have to show that the following holds:
     \bas
{\rm \rho}([(r_1,{\blockX^{1}}),(r_2,{\blockX^{2}})]_{\mathfrak{g}})=[{\rm \rho}(r_1,{\blockX^{1}}),{\rm \rho}(r_2,{\blockX^{2}})]_{\rm End{(\mathfrak{h}})}.
		\eas

 By computing the expression on the right-hand side we find, with \(\blockV\in\mathfrak{h}\),
 \bas
		[\rho(r_1,{\blockX^{1}}),\rho(r_2,{\blockX^{2}})]_{\rm End{(\mathfrak{t}})}\blockV
		&=&	\rho(r_1,{\blockX^{1}})\rho(r_2,{\blockX^{2}})\blockV
		-
		\rho(r_2,{\blockX^{2}})\rho(r_1,{\blockX^{1}})\blockV
		\\
		&=&	\rho(r_1,{\blockX^{1}})(r_2\star \blockV+[{\blockX^{2}},\blockV]_{\mathfrak{t}})
		-
		\rho(r_2,{\blockX^{2}})(r_1\star \blockV+[{\blockX^{1}},\blockV]_{\mathfrak{t}})
		\\
		&=& r_1\star r_2\star \blockV+r_1\star [{\blockX^{2}},\blockV]_{\mathfrak{t}}+[{\blockX^{1}},r_2\star \blockV]_{\mathfrak{t}}+[{\blockX^{1}},[{\blockX^{2}},\blockV]_{\mathfrak{t}}]_{\mathfrak{t}}
  \\&&
  -
	r_2\star r_1\star \blockV-r_2\star [{\blockX^{1}},\blockV]_{\mathfrak{t}}-[{\blockX^{2}},r_1\star \blockV]_{\mathfrak{t}}-[{\blockX^{2}},[{\blockX^{1}},\blockV]_{\mathfrak{t}}]_{\mathfrak{t}},
  \eas
 since $\mathfrak{r}$
  is a commutative ring we find 
  \bas
 	\lefteqn{[\rho(r_1,{\blockX^{1}}),\rho(r_2,{\blockX^{2}})]_{\rm End{(\mathfrak{h}})}\blockV
		=}&&
\\&=& r_1\star [{\blockX^{2}},\blockV]_{\mathfrak{t}}+[{\blockX^{1}},r_2\star \blockV]_{\mathfrak{t}}+[{\blockX^{1}},[{\blockX^{2}},\blockV]_{\mathfrak{t}}]_{\mathfrak{t}}-r_2\star [{\blockX^{1}},\blockV]_{\mathfrak{t}}-[{\blockX^{2}},r_1\star \blockV]_{\mathfrak{t}}-[{\blockX^{2}},[{\blockX^{1}},\blockV]_{\mathfrak{t}}]_{\mathfrak{t}}.
  \eas
  By applying the identity  
  	$$[\blockX,r\star \blockV]=\blockX(r)\star \blockV+r\star [\blockX,\blockV],$$
  and the Jacobi identity we get 
  
  \bas
  [\rho(r_1,{\blockX^{1}}),\rho(r_2,{\blockX^{2}})]_{\rm End{(\mathfrak{h}})}\blockV&=&-{\blockX^{2}}(r_1)\star \blockV+{\blockX^{1}}(r_2)\star \blockV
 +[{\blockX^{1}},[{\blockX^{2}},\blockV]_{\mathfrak{t}}]
		-[{\blockX^{2}},[{\blockX^{1}},\blockV]_{\mathfrak{t}}]_{\mathfrak{t}}
  \\&=&
  ({\blockX^{1}}(r_2)-{\blockX^{2}}(r_1))\star \blockV
+[[{\blockX^{1}},{\blockX^{2}}]_{\mathfrak{t}},\blockV]_{\mathfrak{t}}
\\
&=&
\rho( ({\blockX^{1}}(r_2)-{\blockX^{2}}(r_1),[{\blockX^{1}},{\blockX^{2}}]_{\mathfrak{t}}))\blockV
\\
&=&
\rho([(r_1,{\blockX^{1}}),(r_2, {\blockX^{2}})]_{\mathfrak{g}}) \blockV.
	\eas

	This implies that we can generate transformations with elements in \(\mathfrak{g}\) to transform elements in \(\mathfrak{h}\).
If we let \(\mathfrak{h}=\mathfrak{t}\), then the transformations are outer automorphisms of \(\mathfrak{t}\).
\end{proof}
\subsubsection{Computation of the outer normal form}\label{sec:otnft}
Suppose we are given a bigraded set of subspaces \(\mathfrak{\g}^i_k\) of a bigraded Lie ring, such that
\[
[\mathfrak{\g}^i_k,\mathfrak{\g}^j_l]\subset\mathfrak{\g}^{i\oplus j}_{k+l},\quad \mathfrak{r}^i_k\mathfrak{\g}^i_l\subset\mathfrak{\g}^i_{k+l}.
\]
Let \[\mathfrak{t}^i_j=\bigoplus_{k=1}^i \bigoplus_{l=0}^j \mathfrak{\g}^l_k.\]
Then \(\mathfrak{t}=\mathfrak{t}^m\supset\mathfrak{t}^{m-1}\supset\cdots\supset\mathfrak{t}^1\supset\mathfrak{t}^0=0\) is a finite min-tropical filtration, for vector fields induced by the block structure,
and \(\mathfrak{t}^i=\mathfrak{t}^i_0\supset\mathfrak{t}^i_1\supset\cdots\) is an infinite filtration, for vector fields induced by the grade. Let \(\pi^i_j:\mathfrak{t}\rightarrow\mathfrak{\g}^i_j\) be the natural projection.
The normal form procedure is as follows:

We are given an object \(V^{[m]}_{[0]}\in\mathfrak{t}\) and we want to bring it in some kind of normal form using outer transformations.
In order to solve the normal form equation, we need a part of \(V^{[m]}_{[0]}\) with invertible coefficients.
We can for instance start by taking the lowest order part \(V^{(1)}_{(0)}=V^{[1]}_{[0]}\mod\mathfrak{t}^1_1\) and from \(\mathsf{V^{(1)}_{(0)}}\) remove the non-invertible coefficients.

In general, we take
 \(1\leq i \leq m\) and \( 1\leq {j_i}< \infty\).
Assume \(\mathfrak{n}^k_l\), the complement to the \(\rho\)-action in \(\mathfrak{\g}^k_l\), is determined for \(k\leq i\) and \(1\leq l< j_i\). We initially take \(i=1\) and \(j_1=1\), and make no assumptions on the terms being in normal form. We now describe the general step, going from
\((i,j_i-1)\) to \((i,j_i)\). We assume \(V^{[i]}_{[j_i-1]}\) to be in normal form up to the corresponding filtration degrees.
Then,
in order to solve the normal form equation, we need a part of \(V^{[i]}_{[j_i-1]} \) with invertible coefficients.
We can for instance take the lowest order part \(V^{(i)}_{(j_i-1)}=V^{[i]}_{[j_i-1]}\mod\mathfrak{t}^i_{j_i}\) and from \(\mathsf{V}^{(i)}_{(j_i-1)}\) remove the non-invertible coefficients.


We now take our transformation generators from \(Y^i_{j_i}\in\mathfrak{r}^i\rtimes\mathfrak{t}^i_1\), such that
\[
\rho(Y^i_{j_i})V^{(i)}_{({j_i-1})}\in\mathfrak{t}^i_{j_i},
\]
making sure the transformation does not disturb the terms already in normal form, and
determine 

\(\mathrm{im}\ \rho(\mathfrak{r}^i\rtimes\mathfrak{t}^i_1)V^{(i)}_{({j_i}-1)}\) and choose a complement \(\mathfrak{n}^i_{j_i}\)
to it in \(\mathfrak{t}^i_{j_i}/\mathfrak{t}^i_{{j_i}+1}\).

Then let \(V^i_{j_i}=\pi^i_j V^{[{i}]}_{[j_i-1]}\) and compute a transformation generator \(Y^i_{j_i}\in\mathfrak{r}^i\rtimes\mathfrak{t}^i_1\), such that 
\[
V^{i}_{j_i}+\rho(Y^i_{j_i})V^{(i)}_{({j_i})}\in\mathfrak{n}^i_{j_i}.
\]
Let 
\bas
V^{[i]}_{[{j_i}]}&=&\exp\rho(Y^i_{j_i}) V^{[i]}_{[{j_i}-1]}
\\&=&V^{[i]}_{[j_i-1]}+ \rho(Y^i_{j_i})V^{[i]}_{[{j_i}]}+\cdots
\eas
The projection on \(\mathfrak{\g}^i_{j_i}\) equals \(V^{i}_{j_i}+\rho(Y^{(i)}_{(j_i)}) V^{(i)}_{(j_i)}\in\mathfrak{n}^i_{j_i} \)
and we have \(V^{[i]}_{[{j_i}]}\) in normal form with respect to \(V^{(i)}_{({j_i}-1)}\).

This allows us to increase \(j_i\). This can go on indefinitely; in practice, in a computation, one puts an upper bound on the \(j_i\)s; nevertheless the description of the normal form might be possible to \(\infty\), at least for formal power series. As we will show in 
sections \ref{sec:2dorbital} and \ref{sec:3dorbital}.
To increase \(i\), we put \(j_{i+1}=1\) and start all over again.

Remark that while for vector fields the grading is the degree minus one, for the elements in \((r,\blockX)\in\mathfrak{g}\) the grading of
\(r\) is the degree and for \(\blockX\) the usual degree minus one.
Also, one should be careful with the interpretation of the result, which looks like an ordinary vector field, 
However, since every component could have been multiplied by a different function, time means something different for every component.
Since the transformation \(\exp(\rho(r,\blockX))\) is invertible and acts on the coordinates, this should be used to recover
useful approximating solutions with the correct uniform timing.

  Finally, we mention that if \(\mathsf{S}\neq 0\), it makes sense to choose \(r\in\ker\mathsf{S}\), if the vector field is in \(\ker\ad{\mathsf{S}}\).

\subsection{Quadratic outer normal form  in the triangular case \(m=n-1\)}
\def\im{\mathrm{im\ }}

In this section we construct for \(l>0\), \((\alpha_l x_{2l+p-1},\sum_{j=1}^{l}\beta^l_j x_{p+2l-j} x_{p+j-1}\partial^p)\) such that \bas
\rho((\alpha_l x_{2l+p-1},\sum_{j=1}^{l}\beta^l_j x_{p+2l-j} x_{p+j-1}\partial^1))\mathsf{N}=\tau_p^{2l}\partial^p.
\eas
\begin{lemma}\label{lem:otrans}
 Let \(l>0\). Then \(\tau_p^{2l}\partial^{p}\in\im\rho\). 
\end{lemma}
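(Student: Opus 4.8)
The plan is to verify directly that the generator $(r,X)=\bigl(\alpha_l x_{p+2l-1},\,h\,\partial^p\bigr)\in\mathfrak r^p\rtimes\mathfrak t^p$ advertised just before the statement, with $h=\sum_{j=1}^{l}\beta^l_j x_{p+2l-j}x_{p+j-1}$, can be tuned so that $\rho(r,X)\N=\tau_p^{2l}\partial^p$, working modulo $\mathfrak t^{p-1}$ and projecting onto the $p$-th component $\mathfrak{\g}^p$ (as in Theorem \ref{thm:Lop}). First I would compute the two pieces of \eqref{eq:RHOOP}. For the inner part, a direct bracket gives $[h\,\partial^p,\N]=h\,\partial^{p-1}-\mathsf{L}_{\N}(h)\,\partial^p$, and since $h\,\partial^{p-1}\in\mathfrak t^{p-1}$ only $-\mathsf{L}_{\N}(h)\,\partial^p$ survives. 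For the outer part, the $p$-th component of $\N$ is $x_{p+1}\partial^p$, so $r\star\N$ projects to $r\,x_{p+1}\partial^p=\alpha_l x_{p+2l-1}x_{p+1}\partial^p$. Hence the lemma reduces to solving, in $\mathfrak r^p$,
\[
\alpha_l\,x_{p+2l-1}x_{p+1}-\mathsf{L}_{\N}\Bigl(\sum_{j=1}^{l}\beta^l_j x_{p+2l-j}x_{p+j-1}\Bigr)=\tau_p^{2l}
\]
for the $l+1$ unknowns $\alpha_l,\beta^l_1,\dots,\beta^l_l$.

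Next I would observe that every monomial on both sides has index sum $2p+2l$, so all of them lie in the single $\mathsf{L}_{{\H}_p}$-weight space $W_\omega$ of $S^2\langle x_p,\dots,x_n\rangle$ with $\omega=2(n-p)-4l$, which is $\ge 0$ since $\tau_p^{2l}$ requires $2l\le n-p$. Expanding with $\mathsf{L}_{\N}(x_ax_b)=x_{a+1}x_b+x_ax_{b+1}$ and the transvectant formula of Lemma \ref{lem:trans}, I would match the coefficients of the $l+1$ basis monomials $x_{p+a}x_{p+2l-a}$, $a=0,\dots,l$. The resulting $(l+1)\times(l+1)$ system is triangular: the $a=0$ monomial $x_px_{p+2l}$ fixes $\beta^l_1$, the $a=l$ monomial $x_{p+l}^2$ fixes $\beta^l_l$, each intermediate monomial couples only two consecutive unknowns $\beta^l_a,\beta^l_{a+1}$, and $\alpha_l$ occurs alone in the $a=1$ monomial $x_{p+1}x_{p+2l-1}$. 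Because all diagonal entries are $\pm1$, the $\beta^l_j$ are obtained by a backward recursion and $\alpha_l$ is read off last, so the system is uniquely solvable and $\tau_p^{2l}\partial^p\in\im\rho$.

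Conceptually, the reason the construction succeeds is the $\Sl$-decomposition of Theorem \ref{thm:sl2triple} restricted to $W_\omega$: since $\omega\ge 0$ the lowering map $\mathsf{L}_{{\N}_p}\colon W_{\omega+2}\to W_\omega$ is injective, its image is the codimension-one complement of $\mathbb{R}\,\tau_p^{2l}=\ker\mathsf{L}_{{\M}_p}\cap W_\omega$, and the $l$ monomials $\{x_{p+2l-j}x_{p+j-1}\}_{j=1}^{l}$ form a basis of $W_{\omega+2}$; thus their $\mathsf{L}_{\N}$-images already span $\im\mathsf{L}_{\N}$, and the outer monomial $x_{p+2l-1}x_{p+1}$ only has to supply the missing highest-weight direction. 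The one genuine obstacle is this transversality, namely that $x_{p+2l-1}x_{p+1}$ has a nonzero $\tau_p^{2l}$-component (equivalently $x_{p+2l-1}x_{p+1}\notin\im\mathsf{L}_{\N}$); in the direct route it is guaranteed by the nonzero $\pm1$ diagonal, but I would still check the degenerate cases $l=1$ (where the equations isolating $\alpha_l$ and the top monomial coincide) and the maximal $l$ with $2l=n-p$, to confirm that $\alpha_l\neq0$ and hence that the outer transformation really removes a term the inner normal form retains.
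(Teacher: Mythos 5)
Your proposal is correct and follows essentially the same route as the paper's proof: the same ansatz \((\alpha_l x_{p+2l-1},\sum_{j=1}^l\beta^l_j x_{p+2l-j}x_{p+j-1}\partial^p)\), the same computation of \(\rho\) applied to \(\N\) with the \(\partial^{p-1}\)-term discarded modulo \(\mathfrak{t}^{p-1}\), and the same \((l+1)\times(l+1)\) triangular system solved by backward recursion from \(\beta^l_l\) with \(\alpha_l\) read off last, exactly as in the paper's ``starting from \(\beta^l_l\) and going upward.'' Your closing \(\Sl\)-weight-space remark (injectivity of \(\mathsf{L}_{{\N}_p}\) on \(W_{\omega+2}\) and the decomposition \(W_\omega=\mathsf{L}_{\N}(W_{\omega+2})\oplus\mathbb{R}\,\tau_p^{2l}\) from Theorem \ref{thm:sl2triple}) is a sound conceptual supplement that the paper leaves implicit, but it does not change the argument.
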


\begin{proof}
Recall that
\bas\label{eq:inv2a}
		\tau^{2l}_p&=&\sum_{j=0}^{2l} (-1)^{j} 
		\binom{n-p-(2l-j)}{j}
		\binom{n-p-j}{2l-j} x_{p+2l-j}x_{p+j},
			\\&=&2\sum_{j=0}^{l-1} (-1)^{j} 
		\binom{n-p-(2l-j)}{j}
  + (-1)^{l} {\binom{n-p-l}{l}}^2 x_{p+l}^2
  \\&=&
  2\binom{n-p}{2l} x_{p+2l}x_{p}
  -2(n-p-(2l-1))\binom{n-p-1}{2l-1} x_{p+2l-1}x_{p+1}
  \\&&+2\sum_{j=2}^{l-1} (-1)^{j} \binom{n-p-(2l-j)}{j}\binom{n-p-j} {2l-j} x_{p+2l-j}x_{p+j}
  + (-1)^{l} {\binom{n-p-l} {l}}^2x_{p+l}^2.
  \eas
We compute
\bas
\lefteqn{\rho(( \alpha_l x_{2l+p-1},\sum_{j=1}^{l}\beta^l_j x_{p+2l-j} x_{p-1+j}\partial^p) )(x_2\partial^1 +\cdots+x_n\partial^{n-1})}&&\\
&=&( \alpha_l x_{p+1} x_{2l+p-1} -\sum_{j=1}^{l} \beta^l_j x_{p+1+2l-j} x_{p-1+j}-\sum_{j=1}^{l} \beta^l_j x_{p+2l-j} x_{j+p})\partial^p
+\sum_{j=1}^{l}\beta^l_j x_{p+2l-j} x_{p-1+j}\partial^{p-1}
\\&=&( \alpha_lx_{p+1} x_{2l+p-1}  -\sum_{j=0}^{l-1} \beta^l_{j+1} x_{p+2l-j} x_{j+p}-\sum_{j=1}^{l} \beta^l_{j} x_{p+2l-j} x_{j+p})\partial^p
+\sum_{j=1}^{l}\beta^l_j x_{p+2l-j} x_{p-1+j}\partial^{p-1}
\\&=&( \beta^l_{1} x_{p+2l} x_{p}+(\alpha_l -\beta^l_1-\beta^l_2)x_{p+1} x_{2l+p-1}    -\sum_{j=2}^{l-1} (\beta^l_{j+1}+\beta^l_{j})x_{p+2l-j} x_{j+p}-\beta^l_{l} x_{l+p}^2)\partial^p
\\&&+\sum_{j=1}^{l}\beta^l_j x_{p+2l-j} x_{p-1+j}\partial^{p-1}.
\eas
We ignore the \(\partial^{p-1}\)-term, as it is considered higher order in the maxtropical filtration topology.
So we need to solve
\bas
\alpha_l&=& \beta^l_1+\beta^l_2-2(n-p+1-2l ){\binom{n-p-1} {{2l}-1}} ,\\
\beta^l_1 &=&-2\binom{n-p}{2l} ,\\
\beta^l_j&=&-\beta^l_{j+1}-2(-1)^{j} 
    \binom{n-p-(2l-j)}{j}
    \binom{n-p-j} {2l-j},
    \quad j=2,\ldots,l-1,\\
\beta^l_{l}&=& -(-1)^l
    {\binom{n-p-l} {l}}^2,
\eas
and we see that this can be done, starting from \(\beta^l_l\) and going upward.
 \end{proof}
 \begin{theorem}[Quadratic outer normal form]\label{thm:ndonf}
  We can achieve a quadratic outer formal normal form for Equation \eqref{eq:eqs} through a sequence of invertible transformations. This normal form is given, modulo terms of degree \(>2\), by the following expression:
	\bas 
		v^{[2]}:=\N+
		\sum_{p=1}^n a_p x_p^2\frac{\partial }{\partial x_p}+\cdots.
	\eas
In other words, \(\mathfrak{n}^p_1=\langle x_p^2\partial^p\rangle, p=1,\ldots,n\) in the triangular case.
\end{theorem}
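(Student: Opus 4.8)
The plan is to take the inner quadratic normal form supplied by Theorem~\ref{thm:ndnf} as the starting point and then remove, component by component, every quadratic term except $x_p^2\partial^p$ using the outer transformations of Theorem~\ref{thm:oop}. By Theorem~\ref{thm:ndnf} the quadratic part of~\eqref{eq:eqs} already lies in $\bigoplus_{p=1}^{n}\langle\tau^k_p\partial^p : 1\le p+k\le n\rangle$, and by Lemma~\ref{lem:trans} the odd transvectants vanish, so for each fixed $p$ the quadratic content of the $p$-th block $\mathfrak{\g}^p$ is spanned by the even transvectants $\tau^{2l}_p\partial^p$ with $0\le l\le[\frac{n-p}{2}]$, where $\tau^0_p=x_p^2$. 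It therefore suffices to prove that each $\tau^{2l}_p\partial^p$ with $l>0$ can be eliminated, while $\tau^0_p\partial^p=x_p^2\partial^p$ must survive.

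For the elimination I would run the block-triangular scheme of Section~\ref{sec:otnft} with $\mathfrak{h}=\mathfrak{t}$, taking $\N$ as the invertible leading part and grade index $j_p=1$ (the quadratic grade) in each block. Lemma~\ref{lem:otrans} shows that for every $l>0$ with $p+2l\le n$ the element $\tau^{2l}_p\partial^p$ lies in $\im\rho$; more precisely it exhibits a generator in $\mathfrak{r}^p\rtimes\mathfrak{t}^p_1$ whose $\rho$-action sends $\N$ to $\tau^{2l}_p\partial^p$ modulo the $\partial^{p-1}$-piece. Since $Y\mapsto\rho(Y)\N$ is linear in $Y$ and the weights $\omega_{\tau^{2l}_p}=2(n-p)-4l$ are pairwise distinct, a single generator $Y^p_1=(r,\blockX)\in\mathfrak{r}^p\rtimes\mathfrak{t}^p_1$ can be chosen so that the $\partial^p$-part of $\rho(Y^p_1)\N$ equals $-\sum_{l>0}a^{2l}_p\tau^{2l}_p\partial^p$. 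Then $\exp\rho(Y^p_1)$ applied to the inner normal form cancels all higher transvectants of the $p$-th block, its cubic and higher tails being irrelevant since we work modulo degree $>2$. Carrying this out for $p=1,\dots,n$ leaves precisely $\sum_p a_p x_p^2\partial^p$, with $a_p$ the transform of $a^0_p$; that is, $\mathfrak{n}^p_1=\langle x_p^2\partial^p\rangle$.

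It remains to confirm that $x_p^2\partial^p$ cannot be removed and hence constitutes the complement. Among the quadratic monomials in $\mathfrak{r}^p$ the term $x_p^2$ is the unique one of top $\H_p$-weight $2(n-p)$. On the other hand, the quadratic $\partial^p$-part of $\rho(r,\blockX)\N=r\star\N+[\blockX,\N]$ consists of $r\,x_{p+1}\partial^p$ (with $r\in\mathfrak{r}^p$ linear) together with a term in $\im\mathsf{L}_{\N}$; the monomials $x_k x_{p+1}$ with $k\ge p$ all have weight $<2(n-p)$, and $\mathsf{L}_{\N}$ strictly lowers the weight, so neither contribution can reach weight $2(n-p)$. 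Hence $x_p^2\partial^p\notin\im\rho$, and it is forced into $\mathfrak{n}^p_1$. Equivalently, generating $\tau^0_p$ by the same mechanism would require the linear function $x_{p-1}$ in the $\star$-action $r\star\N$, which is excluded from $\mathfrak{r}^p=\mathbb{R}[[x_p,\dots,x_n]]$ by the triangular restriction.

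The step I expect to be the main obstacle is the bookkeeping of non-interference across the successive transformations: each $\exp\rho(Y^p_1)$ emits a $\partial^{p-1}$ contribution and higher-degree tails, and one must verify that these never regenerate a $\tau^{2l}_p\partial^p$ ($l>0$) already removed in the block just treated. This is exactly what the two gradings of Section~\ref{sec:otnft} control: the min-tropical filtration $[\mathfrak{t}^i,\mathfrak{t}^j]\subset\mathfrak{t}^{\min(i,j)}$ pushes the spurious $\partial^{p-1}$ pieces into a strictly lower block, processed only afterward, while the grade filtration $\mathfrak{t}^p=\mathfrak{t}^p_0\supset\mathfrak{t}^p_1\supset\cdots$ keeps the quadratic corrections triangular. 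Checking that the prescribed order of the pairs $(p,l)$ makes the whole process finite and self-consistent is the delicate point; once it is settled, the stated form of $v^{[2]}$ follows.
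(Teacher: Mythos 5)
Your proposal is correct and follows essentially the same route as the paper, whose entire proof consists of citing Theorem~\ref{thm:ndnf} (the inner quadratic normal form spanned by the even transvectants $\tau^{2l}_p\partial^p$) together with Lemma~\ref{lem:otrans} (each $\tau^{2l}_p\partial^p$ with $l>0$ lies in $\mathrm{im}\,\rho$). Your extra weight argument showing $x_p^2\partial^p\notin\mathrm{im}\,\rho$ --- so that $\mathfrak{n}^p_1$ is genuinely $\langle x_p^2\partial^p\rangle$ rather than possibly smaller --- is a point the paper leaves implicit, and your closing remark that the spurious $\partial^{p-1}$ pieces must land in blocks treated \emph{afterward} (i.e.\ the blocks are processed in decreasing $p$, as the paper does in its $3$D computation) correctly identifies how the min-tropical filtration controls the bookkeeping.
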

 \begin{proof}
     The proof follows from Theorem \ref{thm:ndnf} and Lemma \ref{lem:otrans}.
 \end{proof}
\subsection{Low dimensional examples}	
In this section, we will use the outer normal form provided in the previous section to determine the finite determinacy around the origin of \(2\)D feed forward and \(3\)D feed forward systems. Through a few steps, we will prove Theorems \ref{thm:2dtrun} and \ref{thm:final3DNF}. The authors of \cite{gandhi2020bifurcations} applied singularity theory to determine the finite determinacy of this 
\(2\)D network, arriving at the same results as ours. See also \cite{gazor2013normal,algaba2003quasi,algaba2004algorithm} for related discussions. For further details on these topics, we refer the reader to \cite{gazor2013normal,baider1991unique,baider1992further}.
\begin{enumerate}
    \item[I.]  In the first step, one should find the first-level normal form as recalled in Section \ref{sec:preli}.
 In the first-level normal form, only the linear part contributes, while in further reductions, we also consider the nonlinear terms. We use \(\delta^{(1)},\) for the classification in the normal form of the first level. 

\item [II.]
 
After finding the first-level normal form, one proceeds to further reduce it. This method involves using the nonlinear terms (very first) in the first-level normal form along with the linear part to achieve additional simplification, if possible. For example for \(2\)D case
the leading term is  \(\mathcal{L}_1,\) see \eqref{eq:L12d}.  The new grading should then be defined to make these terms of the same grade, and we serve \(\delta^{(2)}\) for the grading of the second level normal form. Note that for our feed forward networks, the second level normal form is the unique form as well.

\item [III.]
In the outer normal form, we use the ring of polynomials with coordinate transformations to simplify the first-level normal form, as we define the \(\rho\)-operator in Theorem \ref{thm:oop}. 
For the \(2\)D case, see Equation \eqref{EQ: rho2}.  Note that, to find the outer normal form,  we combine items II and III.
This is the operator used in \eqref{eq:RHOO2D} for \(2\)D  and \eqref{eq:r3} for \(3\)D.
\end{enumerate}

We would like to note that, in general, there is no specific way to find the unique normal form. However, the main idea is to use the terms in the first-level normal form, along with the linear part, to make further simplifications if possible.

 \subsubsection{$2$D outer versal normal form}\label{sec:2dorbital}
In this section, we will apply the operator 
\(\rho\)
as defined in Theorem \ref{thm:oop} to the 
\(2\)D Equation \eqref{eq:n2}. Our goal is to prove the following theorem:

  \begin{theorem}[Outer unique normal form]\label{thm:2dtrun}
		There is a formal  transformation 
  that transforms 
   \bas
			\left\{ \begin{array}{l} \dot {x}_{1} = 
				{x_{2}}+ F_1({x_{1}},{x_{2}}),\\ 
				\dot {x}_{2} =F_2({x_{2}}),
			\end{array} \right.
   \eas
 to the following outer formal normal form 
 \bas
	\left\{ \begin{array}{l} \dot {x}_{1} = x_2+
		\left(\mu_1+\sum_{n=2}^{\infty}\sum_{k=0}^{n}
\left( \frac{-1}{2a_2} \right)^k c_{n-k}
     \mu_1^{n} \right)x_1+a_2x_1^2,\\ 
		\dot {x}_{2}  =\nu+\mu_2 {x_{2}}+\bar b_2 x_2^2. \end{array} \right.
  	\eas

   where \(\mu_1,\mu_2\) and \(\nu\) are versal parameters and  \(a_2,\bar b_2\) are supposed to be invertible.
   	\end{theorem}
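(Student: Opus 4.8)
The plan is to follow the three-step scheme (I)--(III) described above, specialised to $n=2$. First I would place the system in its first-level (inner) normal form via the triangular $\Sl$-style of Theorem \ref{thm:sl2triple}. For $n=2$ the $\Sl$-triple in the first component is \eqref{eq:sl2v2}, so that $\mathsf{L}_{\M}f = x_1 \tfrac{\partial}{\partial x_2} f$ and hence $\ker(\mathsf{L}_{\M})\mid\mathfrak{r}^1 = \mathbb{R}[[x_1]]$, while the second component is governed by the trivial triple and is left untouched. Combining this with the constant normal form of Lemma \ref{lem:Cons} (which retains only $\nu \tfrac{\partial}{\partial x_2}$) and with the versal deformation of the nilpotent linear part (producing the diagonal parameters $\mu_1,\mu_2$ after scaling the off-diagonal entry to $1$) reduces the system to
\[
\dot x_1 = x_2 + \mu_1 x_1 + \sum_{n\geq 2} c_n x_1^n, \qquad \dot x_2 = \nu + \mu_2 x_2 + \sum_{n\geq 2} d_n x_2^n,
\]
with $c_2=a_2$ and $d_2=\bar b_2$ the leading quadratic coefficients, assumed invertible.

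Next I would orbitally reduce the scalar second component. Viewing $\dot x_2 = \nu + \mu_2 x_2 + \sum_{n\geq 2}d_n x_2^n$ as a one-dimensional system, I apply the $\star$-action of a unit $r^2(x_2)=1+O(x_2)\in\mathfrak{r}^2$, i.e.\ an orbital (time-reparametrising) transformation in the sense of Theorem \ref{thm:oop}. Since $\bar b_2=d_2$ is invertible the organizing centre is a generic fold, and the classical one-dimensional orbital normal form absorbs all terms of degree $\geq 3$ into the reparametrised versal parameters, leaving $\dot x_2 = \nu + \mu_2 x_2 + \bar b_2 x_2^2$. This is exactly the $p=2$ instance $\mathfrak{n}^2_1=\langle x_2^2\tfrac{\partial}{\partial x_2}\rangle$ of Theorem \ref{thm:ndonf}.

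The substance of the proof is the outer reduction of the first component. Here I would use the representation $\rho$ of Theorem \ref{thm:oop} with generators $(r,Y)\in\mathfrak{r}^1\rtimes\mathfrak{t}^1$, taking the quadratic $a_2 x_1^2\tfrac{\partial}{\partial x_1}$ as the leading nonlinearity that defines the second-level grading $\delta^{(2)}$. To remove a term $c_n x_1^n\tfrac{\partial}{\partial x_1}$ with $n\geq 3$ I would pair the orbital generator $r=\gamma x_1^{n-2}$ with the coordinate generator $Y=\beta x_1^{n-1}\tfrac{\partial}{\partial x_1}$: the orbital part $r\star V$ contributes $\gamma a_2 x_1^n$ (removing $c_n x_1^n$) but also a spurious $x_2$-dependent term $\gamma x_1^{n-2}x_2$, which is cancelled by the bracket through the identity
\[
[\beta x_1^{n-1}\tfrac{\partial}{\partial x_1},\, \N] = -\beta(n-1) x_1^{n-2} x_2 \tfrac{\partial}{\partial x_1},
\]
upon setting $\gamma=\beta(n-1)$. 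This is the same mechanism by which $\rho$ converts the trivial kernel element into transvectant terms, cf.\ Lemma \ref{lem:otrans} with $\tau_1^0=x_1^2$, iterated to all orders.

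Finally, each such generator also brackets with $\mu_1 x_1\tfrac{\partial}{\partial x_1}$ and with $a_2 x_1^2\tfrac{\partial}{\partial x_1}$, feeding a correction of one lower order. Since the homological operator attached to the quadratic leading term carries the multiplier $2a_2$ (the value of $\tfrac{\partial}{\partial x_1}(a_2 x_1^2)$), each descent multiplies the correction by the factor $\tfrac{-1}{2a_2}\mu_1$ and lowers the degree by one; iterating from degree $n$ down to the linear level and summing over all $n\geq 2$ produces the convolution $\sum_{k=0}^n(\tfrac{-1}{2a_2})^k c_{n-k}$ as the coefficient of $\mu_1^n$, i.e.\ the stated modification $\mu_1+\sum_{n=2}^\infty\sum_{k=0}^n(\tfrac{-1}{2a_2})^k c_{n-k}\mu_1^n$ of the linear coefficient, while every genuinely higher-order monomial is eliminated and only $a_2 x_1^2$ survives. \emph{The crux is precisely this final accounting}: I must check that the outer transformations apply order by order without obstruction, track how each one redistributes the lower-order coefficients, and prove that the accumulated corrections assemble exactly into the displayed convolution series, equivalently into the coefficients obtained by pairing $\sum_j c_j\mu_1^j$ against the geometric series $\sum_k(\tfrac{-1}{2a_2})^k\mu_1^k$, a generating-function identity that encodes the location of the deformed equilibrium.
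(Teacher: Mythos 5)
Your proposal is correct and takes essentially the same route as the paper's proof: the same first-level triangular $\Sl$-style reduction, the same orbital elimination of all but $x_2^2$ in the second component, and the same paired orbital/coordinate generators for the first component, where your cancellation condition $\gamma=\beta(n-1)$ and multiplier $2a_2$ coincide with the paper's choice $\alpha_i=(n-i)\beta_i$ and its recursion coefficient. The ``final accounting'' you defer is exactly what the paper carries out by assigning $\mu_1$ grade one in $\delta^{(2)}$, so that each grade-$n$ slice $\sum_{i} c_i\mu_1^{i}x_1^{n-i+1}$ is normalized in a single pass by solving $\beta_{i-1}+a_2(2\beta_i+c_i)=0$, $\beta_0=-c_0/(2a_2)$, whose closed-form solution yields the residual $\bigl(\sum_{k=0}^{n}(-1/(2a_2))^{k}c_{n-k}\bigr)\mu_1^{n}x_1$, i.e.\ precisely your Cauchy-product identity and your deformed-equilibrium interpretation $\mathcal{V}^{[1]}_n(-\mu_1/(2a_2))=\mathcal{V}^{[2]}_n$.
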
 
  \begin{itemize}
  \item 
   \textbf{Step (1):}  In the first step, we find the first level normal form up to any grade. 
  The general first-level  normal form of \eqref{eq:n2} is given by
 \ba\label{eq:n2s}
	\left\{ \begin{array}{l} \dot {x}_{1} = 
		\mu_1 {x_{1}}+{x_{2}}+a_2 x_{1}^2+\sum_{i=3}^{\infty} a_i x_{1}^i,\\ 
		\dot {x}_{2}  =\nu+\mu_2 {x_{2}}+b_2 x_{2}^2+\sum_{i=3}^{\infty} b_i x_{2}^i. 
  \end{array} \right.
  	\ea
where  \(\nu,\mu_1\) and \(\mu_2\) are versal parameters and \(a_i\) and \(b_i\) are the coefficients of the normal form which are functions of the coefficients of the original system.  

We use the standard grade \(\delta^{(1)}(x_1^ix_2^j\frac{\partial }{\partial {x_{1}}})=i+j-1,\) for finding the first level normal form, see  Section \ref{sec:preli}.
We can observe that \([{x_{2}} \frac{\partial }{\partial {x_{1}} }, x_1^ix_2^j \frac{\partial }{\partial {x_{1}}}]= ix_1^{i-1}x_2^{j+1}  \frac{\partial }{\partial {x_{1}}}.\) Therefore, the terms of the form \(x_1^{i}\) produce the normal form, which is also in the \(\ker(\mathsf{L}_{\M})\), see \eqref{eq:sl2v2}.

      \item  \textbf{Step (2):}\label{step:22}
      Using a near-identity orbital transformation \(1+\sum_{i=1}^{\infty} \zeta_i x_2^i\)  one can eliminate the higher-order terms in the second component, except for the term \(x_2^2\).

  We apply this transformation as

  \(\left(1+\sum_{i=1}^{\infty} \zeta_i x_2^i \right) \left( \nu+\mu_2 {x_{2}}+b_2 x_2^2+\sum_{i=3}^{\infty} b_i x_2^i\right).\) We find 
   \bas
   \left\{ \begin{array}{l} \dot {x}_{1} = 
		\mu_1 {x_{1}}+{x_{2}}+a_2x_1^2 +\sum_{i=3}^{\infty} a_i x_1^i,\\ 
\dot {x}_{2}  =\nu+\mu_2 {x_{2}}+\bar{b}_2 x_2^2. \end{array} \right.
   \eas
\item {\bf Step (3):}
Set 
\(\mathsf{a}^i_j:=x_1^{i+1}x_2^j\frac{\partial }{\partial  {x_{1}}},
\)
and define the new grading  by
	$	\delta^{(2)}(\mu_1^{{p}}{\mathsf{a}}^i_j):={p}+i+2j$.
  For simplicity in the notations  rewrite the first component of \eqref{eq:n2s} to the following form
	\ba\label{eq:n2snb}
v^{[1]}:=\mathcal{L}_{1}+\sum_{i=1}^{\infty}a_i\mathsf{a}^i_0=\mathcal{L}_{1}+\sum_{i=2}^{\infty}v_i^{[1]},
	\ea
	where 
	\ba\label{eq:L12d}
 \mathcal{L}_{1}:=(x_2+\mu_1 {x_{1}}+a_2x_{1}^2)\frac{\partial }{\partial  {x_{1}}}=\mu_1\mathsf{a}^0_0
 +a_2\mathsf{a}^1_0+\mathsf{a}^{-1}_1, \,\,\, \delta^{(2)}(\mathcal{L}_1)=1.
	\ea

Set  \(\mathfrak{t}^p\) to be the graded vector space \(\mathfrak{t}^p = \sum_{n=1}^{\infty} \mathfrak{t}^p_n\), and \(\mathfrak{r}^p = \bigoplus_{i=1}^{\infty} \mathfrak{r}^p_i\) is the ring of orbital transformations, see \eqref{eq:space} and \eqref{eq:space2}.
 Hence,
	
 \bas
\mathfrak{r}^2=\left\{\sum_{0\leq m,k}c_{m,k}x_{1}^m x_2^k=\sum_{0\leq m,k}c_{m,k}\mathsf{o}^m_{k}\mid  c_{0,0}=0,c_{m,k} \in \mathbb{R}\right\}.
	\eas
 Since we are going to use the transformation for the first component of \eqref{eq:n2s} and it is only a function of \(x_1\), we define \(\mathsf{o}^m_0 = \mathsf{o}^m = x_1^m\), with \(\delta_0(\mathsf{o}^m) = m\).
The structure constants are as follows:

 \bas
	\mathsf{o}^m\mathsf{o}^n&=&\mathsf{o}^{m+n},
	\\
	\mathsf{o}^m\mathsf{a}^i_j&=&\mathsf{a}^{i+m}_j,
	\\
	{[\mathsf{a}^i_j,\mathsf{a}^l_k]}&=&(l-i) 	\mathsf{a}^{i+l}_{k+j}.
	\eas

We are now going to multiply \(a_2\mathsf{a}^{1}_0\) with \(x_1^n\). Then we use the Lie bracket to remove the terms \(x_2x_1^n \frac{\partial}{\partial x_1}\).
In fact,
 \bas
	\mathsf{o}^{n}\mathcal{L}_{1}&=&\mathsf{a}^{n-1}_1+\mu_1\,\mathsf{a}^{n}_0+a_2\mathsf{a}^{n+1}_0,\\
	{ [\mathsf{a}^{n}_0,\mathcal{L}_{1}]}&=&-n\mathsf{a}^{n-1}_1
	\,-\mu_1 (n-1)\,\mathsf{a}^{n}_0 -a_2
 (n-2)  \mathsf{a}^{n+1}_0.
	\eas
\item
 {\bf Step (4):} 
 Now, we apply the \(\rho\) operator to \eqref{eq:n2snb} inductively. Assume that for all grades less than \(n-1\), we find the normal which is 
 \(\mathcal{L}_{1}+\sum_{i=2}^{n-1} p_i\mu_1^i x_1 \frac{\partial }{\partial {x_{1}}}\)
 or equivalently,
 \ba\label{eq:nfinf2}
	\left\{ \begin{array}{l} \dot {x}_{1} = 
		\mu_1 x_1+a_2x_1^2+{x_{2}}+\sum_{i=2}^{n-1}p_i\mu_1^i x_1+\sum_{k=n}^{\infty}\sum_{i=0}^{k} c_i\mu_1^{i} x_{1}^{k-i+1},
   \\ 
		\dot {x}_{2}  =\nu+\mu_2 {x_{2}}+\bar b_2 x_2^2. \end{array} \right.
  	\ea
 We will now proceed to find the normal form of grade \(n\). 
  Take the vector field with  \(n\)-th grade  from \eqref{eq:nfinf2}
  \bas
			v^{[1]}_{n}=\sum_{i=0}^{n} c_i\mu_1^{i} x_{1}^{n-i+1}
   \frac{\partial }{\partial {x_{1}}}=\sum_{i=0}^{n} c_i\mu_1^{i}
   \mathsf{a}^{n-i}_0
   \,\,\,\,\,\,\qquad\delta^{(2)}(v_n^{[1]})=n.
			\eas
 Define the coordinates and orbital transformations as 
   	\ba\label{eq:nftrans1}
			\mathsf{o}^{n-1}&:=&\sum_{i=0}^{n-1} \alpha_i\mu_1^i x_{1}^{n-i-1}=\sum_{i=0}^{n-1} \alpha_i\mu_1^i\mathsf{o}^{n-i-1},\,\,\,\,\,\,\,\,\,\,\,\qquad
\delta_0 (\mathsf{o}^{n-1})=n-1,
   \\\label{eq:nftrans2}
  \mathsf{c}^{n-1}&:=&\sum_{i=0}^{n-1}\beta_i\mu_1^i x_{1}^{n-i}\frac{\partial }{\partial {x_{1}}}=
\sum_{i=0}^{n-1}\beta_i
\mu_1^i 
\mathsf{a}^{n-i-1}_0,\quad\quad\quad
\delta^{(2)}( \mathsf{\mathsf{c}^{n-1}})=n-1.
			\ea

   We use the \(\rho\) operator, cf. Theorem \ref{thm:ro}. Hence, we get
      \ba\label{eq:RHOO2D}
       && \rho( \mathsf{o}^{n-1},\mathsf{c}^{n-1} )(\mathcal{L}_1+v^{[1]}_{n})=
   [\mathsf{c}^{n-1} ,v^{[1]}_{n}+\mathcal{L}_1]+\mathsf{o}^{n-1}(\mathcal{L}_1+v^{[1]}_{n})+\mathcal{L}_1+ v^{[1]}_{n}.
      \ea
      Since we aim to simplify the term of grade \(n\) then the focus would be on the terms that generate this grade: 
			\ba\label{EQ: rho2}
   v^{[1]}_{n}+\rho(\mathsf{o}^{n-1},\mathsf{c}^{n-1} )\mathcal{L}_1&=&
   [\mathsf{c}^{n-1} ,\mathcal{L}_1]+\mathsf{o}^{n-1} \mathcal{L}_1+ v^{[1]}_{n}
   \\\nonumber
&=&
\sum_{i=0}^{n-1}
\beta_{i}  \mu_1^{i} 
\left( \left(i-n\right)\mathsf{a}^{n-2-i}_1+\mu_1  \left(i -n +1\right) \mathsf{a}^{n-i-1}_0+a_{2} \left(i -n +2\right) \mathsf{a}^{n-i}_0\right)
\\\nonumber
&&+
\sum_{i=0}^{n-1} \alpha_i\mu_1^i \left(\mathsf{a}^{n-2-i}_1+
\mu_1 \mathsf{a}_0^{n-i-1}+
a_2\mathsf{a}_0^{n-i}\right)
+\sum_{i=0}^{n-1} c_i\mu_1^{i} \mathsf{a}^{n-i}_0+
c_n\mu_1^{n} \mathsf{a}^0_0\\\nonumber
&=&
\sum_{i=0}^{n-1}\left(  
\beta_{i}  \left(i-n\right)
+\alpha_i\right) \mu_1^i \mathsf{a}^{n-2-i}_1
\\&&+\nonumber\sum_{i=1}^{n-1}\left(\beta_{i-1} \left(i -n \right) +\alpha_{i-1}+a_2\left( \alpha_i+\beta_{i}\left(i -n+2\right)\right)+ c_i\right)
\mu_1^{i} \mathsf{a}^{n-i}_0
\\&&+\left( a_2\left(\alpha_0+ \beta_{0}\left( -n+2\right)\right)+ c_0\right)
 \mathsf{a}^{n}_0+{\left(
\alpha_{n-1}+c_n\right)\mu_1^{n} \mathsf{a}^0_0}.
\ea
Putting in \(\beta_{i}  \left(i-n\right)+\alpha_i=0, i=0,\ldots,n-1\),
we obtain
\bas
 v^{[1]}_{n}+\rho(\mathsf{o}^{n-1},\mathsf{c}^{n-1} )\mathcal{L}_1&=&
\sum_{i=1}^{n-1}\left(\beta_{i-1}+a_2\left( 2\beta_{i}+ c_i\right)\right)
\mu_1^{i} \mathsf{a}^{n-i}_0
+\left( 2 a_2\beta_{0}+ c_0\right)
 \mathsf{a}^{n}_0+{\left(
\beta_{n-1}+c_n\right)\mu_1^{n} \mathsf{a}^0_0}
\\&&
+\left(\beta_{0}+a_2\left( 2\beta_{1}+ c_1\right)\right)
\mu_1 \mathsf{a}^{n-1}_0.
\eas
So we need to solve \(\beta_{i-1}+a_2\left( 2\beta_{i}+ c_i\right)=0,\) for \( i=1,\ldots,n-1\) with \(\beta_0=-\frac{c_0}{2a_2}\).

  We find
  \begin{equation}
\begin{cases}
  \beta_i  = \sum_{k=1}^{i+1} \left( -\frac{1}{2a_2} \right)^k c_{i-k+1} & \text{for } 0 \leq i \leq n-1, \\
  \alpha_i = (n-i) \beta_i, & \text{for } 0\leq i \leq n-1.
\end{cases}
\end{equation}
Hence, \(\alpha_{n-1}=\beta_{n-1}=\sum_{k=1}^{n} \left( \frac{-1}{2a_2} \right)^k c_{n-k}\) and
\bas
 v^{[2]}_n(x_1)&=& v^{[1]}_{n}+\rho(\mathsf{o}^{n-1},\mathsf{c}^{n-1})\mathcal{L}_1=
   (\alpha_{n-1}+c_n)\mu_1^{n} \mathsf{a}^{0}_0
\\&=&
   \sum_{k=0}^{n}
\left( \frac{-1}{2a_2} \right)^k c_{n-k}
     \mu_1^{n} \mathsf{a}^{0}_0=\mathcal{V}_n^{[2]}\mathsf{a}^0_0.
\eas

Since 
\bas
			v^{[1]}_{n}(x_1)&=&\sum_{k=0}^{n} c_{n-k}\mu_1^{n-k} x_{1}^{k+1}
   \frac{\partial }{\partial {x_{1}}}=\sum_{k=0}^{n} c_{n-k}\mu_1^{n-k}
   x_1^k \mathsf{a}^{0}_0=\mathcal{V}_n^{[1]}(x_1)\mathsf{a}^0_0,
			\eas
 one has
 \bas
 \mathcal{V}^{[1]}_n(\frac{-\mu_1}{2a_2})&=&\sum_{k=0}^{n} c_{n-k}\left(\frac{-\mu_1}{2a_2 }\right)^{k}\mu_1^{n-k}
   \\&=&\sum_{k=0}^{n} c_{n-k}\left(\frac{-1}{2a_2 }\right)^{k}\mu_1^{n}
  \\&=&\mathcal{V}^{[2]}_n.
  \eas

\end{itemize}
  
 \subsubsection{$3$D outer unique normal form}\label{sec:3dorbital}
In the final part of this paper, we focus on the analysis of $3$D feed forward systems. Our methodology involves the application of outer normal forms, analogous to the techniques employed for $2$D systems in the previous section. However, in this case, we do not involve versal parameters. We aim to prove the following theorem:

\begin{theorem}[Outer unique normal form]\label{thm:final3DNF}
For the {\em $3$D} feed forward network described by the following system near the triple-zero singular fixed point:
 \begin{align}\label{eq:3dd}
      \begin{cases} 
		\dot{x}_1 &= {x_{2}} + F_1({x_{1}},{x_{2}},{x_{3}}), \\
		\dot{x}_2 &= {x_{3}} +F_2({x_{2}},{x_{3}}), \\
		\dot{x}_3 &= F_3({x_{3}}),
	  \end{cases}
  \end{align}
 there exists  an invertible feed forward transformations that bring it to the following outer unique  normal form:
 
	\begin{align}\label{eq:3dl1a}
	\begin{cases} 
		\dot {x}_{1} &= {x_{2}} + a_2 x_1^2, \\
		\dot {x}_{2} &= {x_{3}} + b_2x_{2}^2, \\
		\dot {x}_{3} &= c_2x_3^2.
	\end{cases}
\end{align}

\end{theorem}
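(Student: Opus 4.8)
The plan is to reduce \eqref{eq:3dd} one block at a time, working from the bottom component upward and exploiting the min-tropical filtration $\mathfrak{t}^1\subset\mathfrak{t}^2\subset\mathfrak{t}^3$ together with the outer action $\rho$ of Theorem~\ref{thm:oop}. I would start from the first-level (inner) normal form produced by the triangular $\Sl$-style of Theorem~\ref{thm:sl2triple}, whose quadratic part is, by Theorem~\ref{thm:ndnf}, the system \eqref{eq:nf3} with all versal parameters and the constant $\nu$ set to zero (there are none here). The structural fact I would lean on repeatedly is that a generator $(r^k,h\partial^k)\in\mathfrak{r}^k\rtimes\mathfrak{\g}^k$ acts through $\rho$ only on the equations $x_1,\dots,x_k$: indeed $[\mathfrak{t}^3,\mathfrak{\g}^k]\subset\mathfrak{t}^k$ and $r^k\star$ touches only the $k$-th block, because the feed-forward architecture forces $\partial_1 X_j=0$ for $j\ge2$. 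Hence normalizing the $k$-th equation never disturbs the equations of index $>k$, which dictates the order $x_3,x_2,x_1$ — precisely the order already used in the worked $2$D example.

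\textbf{Bottom and middle components.} First I would normalize $\dot x_3=c_2x_3^2+\sum_{i\ge3}c_ix_3^i$. This is a one-dimensional orbital problem: writing $\dot x_3=c_2x_3^2\,u(x_3)$ with $u$ a unit (using $c_2\neq0$), multiplication by $u^{-1}$ kills every term of degree $\ge3$. Since $\exp\rho(r^3,0)$ acts as multiplication by $e^{r^3}$ on the third block alone, taking $r^3=-\log u\in\mathfrak{r}^3=\mathbb{R}[[x_3]]$ realizes this as a genuine outer transformation, yielding $\dot x_3=c_2x_3^2$. Next I would treat $(x_2,x_3)$ as a $2$D feed-forward system with nilpotent $x_3\partial^2$, now that the third equation is frozen. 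Generators from $\mathfrak{r}^2\rtimes\mathfrak{\g}^2$ act only on $x_1,x_2$ and leave $\dot x_3=c_2x_3^2$ intact. Here the inner normal form of the second equation consists of pure powers $x_3+\sum_{i\ge2}a_ix_2^i$, since $\ker\mathsf{L}_{\M_2}\!\mid\mathfrak{r}^2=\mathbb{R}[x_2]$. Applying the $2$D outer reduction of Theorem~\ref{thm:2dtrun} with all versal parameters set to zero — so that the surviving linear coefficient $\mu_1+\sum_{n\ge2}(\cdots)\mu_1^n$ vanishes — reduces the second equation to $\dot x_2=x_3+b_2x_2^2$, using $b_2\neq0$.

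\textbf{Top component.} Finally I would normalize $\dot x_1=x_2+\big(\text{terms in }\ker\mathsf{L}_{\M_1}\big)$ with generators $(r^1,h\partial^1)\in\mathfrak{r}^1\rtimes\mathfrak{\g}^1$, which affect only the first equation. On the leading part one computes $[h\partial^1,\N]=-\mathsf{L}_{\N}(h)\partial^1$, whence $\rho(r^1,h\partial^1)(\N+a_2x_1^2\partial^1)=\big(r^1x_2+2a_2x_1h-\mathsf{L}_{\N}(h)+\cdots\big)\partial^1$. The quadratic step is exactly Lemma~\ref{lem:otrans}, which places $\tau^2_1\partial^1=(2x_1x_3-x_2^2)\partial^1$ into $\mathrm{im}\,\rho$ and so leaves only $a_2x_1^2$, in agreement with $\mathfrak{n}^1_1=\langle x_1^2\partial^1\rangle$ from Theorem~\ref{thm:ndonf}.

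\textbf{Main obstacle.} The crux is promoting the quadratic step to all grades. I would introduce a second-level grading $\delta^{(2)}$ adapted to the leading term $a_2x_1^2\partial^1$, exactly as in Section~\ref{sec:2dorbital}, and solve the homological equation grade by grade, showing that the operator $h\mapsto 2a_2x_1h-\mathsf{L}_{\N}(h)$ together with the orbital factor $r^1x_2$ surjects onto $\mathfrak{r}^1\partial^1$ modulo $\langle x_1^2\partial^1\rangle$. The decisive, and hardest, point is the invertibility argument: just as the $2$D recursion $\beta_{i-1}+a_2(2\beta_i+c_i)=0$ is solvable precisely because $a_2\neq0$, surjectivity at each grade hinges on $a_2\neq0$, and one must verify that the triangular coupling through $\mathsf{L}_{\N}$ does not obstruct the recursion. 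Granting this, every term of degree $\ge3$ in the first equation is removable, giving $\dot x_1=x_2+a_2x_1^2$; uniqueness then follows since each complement $\mathfrak{n}^p_1=\langle x_p^2\partial^p\rangle$ is one-dimensional and the coefficients $a_2,b_2,c_2$ are left invariant by the reduction (so, as formal objects, \eqref{eq:3dl1a} is $2$-determined modulo the outer group).
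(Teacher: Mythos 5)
Your architecture matches the paper's almost exactly: bottom-up block reduction justified by $[\mathfrak{t}^3,\mathfrak{\g}^k]\subset\mathfrak{t}^k$, unit multiplication for the third equation, the $2$D outer machinery of Theorem \ref{thm:2dtrun} (with parameters set to zero) for the second, the triangular $\Sl$-style first-level normal form and Lemma \ref{lem:otrans} for the quadratic terms of the first, and a second-level grading $\delta^{(2)}$ for the higher grades. The problem is that at the decisive point you write ``Granting this'' and stop. The solvability of the homological equation for the first component at all grades --- the statement that $(r^1,h\partial^1)\mapsto r^1 x_2+2a_2x_1h-\mathsf{L}_{\N}(h)+\cdots$ reaches every kernel term $x_1^l\Delta^m\partial^1$ of degree $\geq 3$ --- is not a routine verification to be deferred; it is the entire content of the theorem beyond bookkeeping, and Lemma \ref{lem:otrans} only settles grade one of this problem. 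Everything up to that point (Steps 1--2 of the paper, your first two paragraphs) is preparation; a proof that defers the surjectivity is not a proof.

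Moreover, the way the paper closes this gap shows that your single-operator formulation slightly misdiagnoses the mechanism. The removal splits into two lemmas with different requirements. Terms $x_1^l\Delta^m\partial^1$ with $m>0$ are removed in Lemma \ref{lem:o31} against the \emph{linear} term $x_2\partial^1$ alone --- no use of $a_2$ --- via the specially engineered pair $\mathsf{o}^l_m=-(2l+3)a^l_m\,x_2x_1^l\Delta^{m-1}$, $\mathsf{c}^l_m=2a^l_m\,x_2x_1^{l+1}\Delta^{m-1}\partial^1$, where the common factor $x_2$ is chosen so that the unwanted $x_3x_1^{l+1}\Delta^{m-1}\partial^1$ contributions from the multiplicative part and the bracket part of $\rho$ cancel against each other (the relation $\alpha^l_m=\tfrac{2l+3}{2}\beta^l_m$); the removal is then organized by decreasing powers of $\Delta$ within each grade so the induction closes. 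Only the pure powers $x_1^l\partial^1$, $l>2$, are removed against $\mathcal{L}_1=\N+a_2x_1^2\partial^1$ in the final lemma, with $\mathsf{o}^l_0=\alpha^l_0x_1^{l-2}$, $\mathsf{c}^l_0=\beta^l_0x_1^{l-1}\partial^1$ and $\beta^l_0=\alpha^l_0/(l-1)$, and it is here --- and only here --- that $a_2\neq0$ enters. Your sketch, by folding both mechanisms into one operator ``hinging on $a_2\neq0$,'' both overstates what invertibility is needed for and omits the cancellation trick that makes the $\Delta$-terms removable at all; without exhibiting these generators (or an abstract surjectivity argument replacing them), the claim that ``every term of degree $\geq3$ in the first equation is removable'' remains an assertion, not a theorem.
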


We begin by establishing some notation before starting the proof. Set the ring of the orbital transformation to be 
\begin{align*}
\mathsf{T}_1 &:=
\left\{\sum_{m,n,s\in \mathbb{N}_0}\alpha^m_{n,s} x_{1}^m x_{2}^n {\Delta}^s \mid  \alpha^{0}_{0,0}=0,\alpha^{m}_{n,s} \in \mathbb{R}\right\}, 
\end{align*}
and \(\Delta = 2{x_{1}}{x_{3}} - x_{2}^2\), which is the second transvectant of \({x_{1}}\) with itself; see Section \ref{subsec:examples}.
 Define \(\delta_0(x_{1}^mx_{2}^n{\Delta}^s)=m+n+2s\) and \(\delta^{(1)}(x_{1}^ix_{2}^j{\Delta}^k
	\frac{\partial}{\partial {x_{1}}})=i+j+2k-1\)
 which are the grades for the first level normal form.
 Using formula \eqref{eq:inductivesl2}, the \(\Sl\)-triple of the \(3\)D nilpotent system (corresponding to the linear part of \eqref{eq:3dd}) is:
  \begin{align*}
	\N &= {x_{2}}\frac{\partial}{\partial {x_{1}}} + {x_{3}}\frac{\partial}{\partial {x_{2}}}, \\
	\M &= 2{x_{1}}\frac{\partial}{\partial {x_{2}}} + 2{x_{2}}\frac{\partial}{\partial {x_{3}}}, \\
	\H &= 2{x_{1}}\frac{\partial}{\partial {x_{1}}} - 2{x_{3}}\frac{\partial}{\partial {x_{3}}}.
  \end{align*}
	The structure constants  are given by 
 \bas
 [x_{1}^{m} x_{2}^{n} \Delta^{s}\frac{\partial}{\partial {x_{1}}},x_{1}^{i} x_{2}^{j} \Delta^{k}\frac{\partial}{\partial {x_{1}}}]&=&\left( \left(k -s \right) x_{2}^{n +j +2}\Delta^{s +k -1}+ \left(i +k -m -s \right) x_{2}^{n +j}\Delta^{s +k} \right) x_{1}^{m +i -1} \frac{\partial}{\partial {x_{1}}},
 \\
 {[x_{1}^{i} x_{2}^{j} \Delta^{k} \frac{\partial}{\partial {x_{1}}}, \N]}&=&
- \frac{1}{2}\left(jx_{1}^{i -1} x_{2}^{j -1} \Delta + \left(2 i +j \right)x_{2}^{j +1} \right) x_{1}^{i-1} \Delta^{k}\frac{\partial}{\partial {x_{1}}},
 \\
 x_{1}^{m} x_{2}^{n} \Delta^{s}\cdot x_1^{i} x_2^{j} \Delta^{k}\frac{\partial}{\partial {x_{1}}}
 &=&
 x_{1}^{m+i} x_2^{n+j} \Delta^{s+k}\frac{\partial}{\partial {x_{1}}}.
 \eas 
 \begin{itemize}
 \item{\textbf{Step (1):} }The initial step involves deriving the first level  normal form for \eqref{eq:3dd}, up to any grade, using the triangular  \(\Sl\)-style. 
  \begin{lemma}
  \label{thm:tri3}
  There exists a sequence of formal feed forward  coordinate transformations that bring the system  \eqref{eq:3dd}
  to the normal form:
  \begin{align}\label{eq:unf}
      \begin{cases} 
		\dot{x}_1 &= {x_{2}} + \bar{f}_1({x_{1}}, \Delta), \\
		\dot{x}_2 &= {x_{3}} + \bar{f}_2({x_{2}}), \\
		\dot{x}_3 &= F_3({x_{3}}),
	  \end{cases}
  \end{align}
  where \(\bar{f}_1\) and \(\bar{f}_2\) are nonlinear polynomials.
\end{lemma}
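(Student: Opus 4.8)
The plan is to read this statement as the \(n=3\) instance of the triangular \(\Sl\)-style normal form of Theorem \ref{thm:sl2triple}, and then to make the three kernels \(\ker(\mathsf{L}_{\M_p})\mid\mathfrak{r}^p\) explicit. First I would record, from \eqref{eq:inductivesl2}, the three \(\Sl\)-triples attached to the linear part of \eqref{eq:3dd}: for \(p=3\) the triple is trivial with \(\M_3=0\); for \(p=2\) one has \(\M_2=x_2\frac{\partial}{\partial x_3}\) acting on \(\mathfrak{r}^2=\mathbb{R}[[x_2,x_3]]\); and for \(p=1\) one has \(\M_1=2x_1\frac{\partial}{\partial x_2}+2x_2\frac{\partial}{\partial x_3}\) acting on \(\mathfrak{r}^1=\mathbb{R}[[x_1,x_2,x_3]]\), with grading operator \(\H_1=2x_1\frac{\partial}{\partial x_1}-2x_3\frac{\partial}{\partial x_3}\). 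Theorem \ref{thm:sl2triple} supplies for each \(p\) the splitting \(\mathfrak{r}^p=\mathsf{L}_{\N_p}(\mathfrak{r}^p)\oplus(\ker(\mathsf{L}_{\M_p})\mid\mathfrak{r}^p)\), which is exactly the solvability of the homological equation in the \(p\)-th component at every grade.

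Next I would argue existence of the transformations by the grade-by-grade induction of Section \ref{sec:preli}. At grade \(i\) one solves \(v^{[0]}_i=w^{[1]}_i+\mathrm{ad}_{\N}\,g^{[1]}_i\) and pushes \(w^{[1]}_i\) into the chosen complement. The essential point — that the generators can be taken inside the feed forward Lie algebra \(\mathfrak{t}\), so that each transformation preserves the triangular architecture — is furnished by Corollary \ref{cor:Lie} and Theorem \ref{thm:Lop}: \(\mathrm{ad}_{\N}\) respects the filtration \(\mathfrak{t}^1\subset\mathfrak{t}^2\subset\mathfrak{t}^3\) and acts on the \(p\)-th graded piece through \(\mathsf{L}_{\N}\) together with the coupling term \(-g_{p+1}\). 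Working from the bottom component upward, the induction therefore produces formal feed forward coordinate changes that place the \(p\)-th nonlinearity in \(\ker(\mathsf{L}_{\M_p})\mid\mathfrak{r}^p\).

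It then remains to identify the three kernels. For \(p=3\), \(\M_3=0\) gives \(\ker(\mathsf{L}_{\M_3})\mid\mathfrak{r}^3=\mathbb{R}[[x_3]]\), so the third line is unchanged and equals \(F_3(x_3)\). For \(p=2\), \(\mathsf{L}_{\M_2}g=x_2\,\frac{\partial g}{\partial x_3}\) vanishes precisely when \(g\) is free of \(x_3\), so \(\ker(\mathsf{L}_{\M_2})\mid\mathfrak{r}^2=\mathbb{R}[[x_2]]\) and the second nonlinearity is \(\bar f_2(x_2)\). For \(p=1\) the kernel is the ring of \(\Sl\)-highest-weight vectors in \(\mathbb{R}[[x_1,x_2,x_3]]\), i.e. the ring of covariant sources of the binary quadratic with coefficients \((x_1,x_2,x_3)\); by the classical first fundamental theorem for binary quadratics this ring is freely generated by the source \(x_1\) (weight \(n-p=2\), degree \(1\)) and the second transvectant \(\Delta=(x_1,x_1)^{(2)}=2x_1x_3-x_2^2\) (weight \(0\), degree \(2\)) computed in Lemma \ref{lem:trans}. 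Hence every normalized first-component nonlinearity is a polynomial \(\bar f_1(x_1,\Delta)\), as claimed.

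The main obstacle is this last identification, since Lemma \ref{lem:trans} only exhibits the quadratic generators, whereas the lemma needs \(x_1\) and \(\Delta\) to generate \(\ker(\mathsf{L}_{\M_1})\) in every degree and to be algebraically independent. I would close this by a Hilbert-series comparison: grading \(\mathbb{R}[x_1,\Delta]\) by \(x\)-degree \(d\) and \(\H_1\)-weight \(u\), the monomial \(x_1^a\Delta^b\) has degree \(a+2b\) and weight \(2a\), giving the series \(\frac{1}{(1-u^2d)(1-d^2)}\). This coincides, up to the bookkeeping factor marking the covariant degree, with the generating function for \(\ker(\mathsf{L}_{\M_1})\) obtained by Hermite reciprocity in Theorem \ref{thm:gen}; equality of the two series forces \(x_1,\Delta\) to be a complete and algebraically independent generating set, which finishes the proof.
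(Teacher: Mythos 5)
Your proposal is correct and follows the same overall route as the paper: the triangular \(\Sl\)-style of Theorem \ref{thm:sl2triple} applied component-by-component, with the kernels identified as \(\mathbb{R}[[x_3]]\), \(\mathbb{R}[[x_2]]\), and the ring generated by \(x_1\) and \(\Delta=2x_1x_3-x_2^2\), the last completeness claim being checked by a generating-function count. The only substantive divergence is in how that last step is certified. The paper applies the Cushman--Sanders test directly: it takes \(P^3(t,u)=\frac{1}{(1-u^2t)(1-t^2)}\) as the series of the ring \(\langle x_1,\Delta\rangle\) and verifies \(\frac{\partial}{\partial u}\left(uP^3(t,u)\right)\big|_{u=1}=\frac{1}{(1-t)^3}\), the series of all power series in three variables, which (together with the containment of the ring in the kernel) forces equality. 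You instead appeal to the classical first fundamental theorem for binary quadratics and then to a Hilbert-series match with Theorem \ref{thm:gen}. Be careful with the latter attribution: Theorem \ref{thm:gen} gives the generating function of the \emph{quadratic} terms of \(\ker(\mathsf{L}_{\M_p})\) across all dimensions (its variable \(d\) marks the dimension, not the \(x\)-degree), so identifying it with your series for \(\mathbb{R}[x_1,\Delta]\) is itself an application of Hermite reciprocity run in reverse; moreover, within the paper's own logic Theorem \ref{thm:gen} is \emph{derived from} the classical Sylvester generating function for covariants of the binary quadratic, so citing it to prove the \(3\)D kernel description is mildly circular, and it is really your direct appeal to the classical fundamental theorem that closes the step. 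Finally, note that in both your argument and the paper's, the series comparison only yields completeness once the algebraic independence of \(x_1\) and \(\Delta\) is known (your phrase ``equality of the two series forces'' independence runs the implication backwards: independence is what licenses writing the series of \(\mathbb{R}[x_1,\Delta]\) as \(\frac{1}{(1-u^2t)(1-t^2)}\) in the first place); this is trivial to check here, e.g.\ by looking at the monomials \(x_1^a x_2^{2b}\) appearing in \(x_1^a\Delta^b\), but it deserves a sentence.
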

  
  \begin{proof}
  Before presenting the proof, we remark that the proof is similar to the first part of the normal form for the \(3\)D system discussed in \cite[Chapter 12.6.2]{sanders2007averaging}.
 Following our \(\Sl\) triangular normal form, we put all components in the normal form. For the third component, there is nothing to remove, since \(\Sl\) is zero. The second component
 \({\ker}(\mathsf{L}_{{\M}}) \mid \mathfrak{r}_2\) is generated by \({x_{2}}\), which is the only generator term in the normal form for any order (see Example \eqref{eq:nf2d}). 

  \({\ker}(\mathsf{L}_{{\M}})\mid \mathfrak{r}_3\) is generated by \(\mathcal{I} = \langle \mathsf{a}, \mathsf{b} \rangle\), where \(\mathsf{a} = {x_{1}}\) and \(\mathsf{b} = (\mathsf{a}, \mathsf{a})^{(2)} = 2{x_{1}}{x_{3}} - x_2^2\), which is the second transvectant of \(\mathsf{a}\) with itself (see Lemmas \eqref{lem:trans} and Equation \eqref{eq:nf3}).
  Corresponding to each element with the monomial \(u^{\omega}t^{\beta}\), where \(\omega\) is the weight of the kernel and \(\beta\) is its degree, we have \(\omega(\mathsf{a}) = 2\) and \(\omega(\mathsf{b}) = 0\). The generating function is given by:
  \begin{align*}
	P^3(t, u) = \frac{1}{(1 - u^2t)(1 - t^2)}.
  \end{align*}
  Now, we apply the Cushman-Sanders test \cite{cushman56nilpotent,cushman1990survey} to show that \(\mathcal{I}\) indeed generates \( {\ker}(\mathsf{L}_{\M})\). Taking the derivative of \(uP^3(t,u)\) with respect to \(u\) and putting \(u = 1\), the result is:
  \begin{align*}
	\frac{\partial }{\partial u } (uP^3(t,u))\mid_{u=1} = \frac{1}{(1-t)^3},
  \end{align*}
  which gives us the generating function of polynomials with three variables. See Section 12 of \cite{sanders2007averaging} for more details and examples. This shows that \(\mathcal{I} = \langle \mathsf{a}, \mathsf{b} \rangle\) generates the normal form of the first component of \eqref{eq:3dl1a}. 
 
  \end{proof}

 \item \textbf{Step (2):}
       Now, we start with Equation \eqref{eq:unf}. The aim is to employ the \(\rho\) action by carrying out a sequence of transformations. For the last component of \eqref{eq:unf}, we apply the orbital transformation given by the second step of \(2\)D case, and for the second component, we apply the coordinate and orbital transformations given in \eqref{eq:nftrans1} and \eqref{eq:nftrans2}. 
       
       Specifically, we begin with the following system:

\begin{align}\label{eq:3dl1b}
	\begin{cases} 
		\dot {x}_{1} &= {x_{2}} + \bar{F}_1({x_{1}},\Delta), \\
		\dot {x}_{2} &= {x_{3}} + b_2x_{2}^2, \\
		\dot {x}_{3} &= c_2x_3^2.
	\end{cases}
\end{align}
Note that after applying the orbital normal to the third and second components, the coefficients of the terms in the first component will be changed. This is why we replace \(\bar{f}_1({x_{1}},\Delta)\) with \(\bar{F}_1({x_{1}},\Delta)\).

Rewrite \(\dot {x}_{1} = {x_{2}} + \bar{F}_1({x_{1}},\Delta)\) as \(\dot {x}_{1} = {x_{2}} + \sum_{p=2}^{\infty} v_p^{[1]}\) where \(v_p^{[1]} = \sum_{k=0}^{\lfloor{\frac{p}{2}}\rfloor} a^{p-2ki}_{k}x_1^{p-2k}\Delta^{k}\)  (with \(a^{p-2ki}_{k}\) as coefficients) with \(\delta^{(1)}(v_p^{[1]}) = p-1\).

\item \textbf{Step (3):}
By induction, we will prove that all higher-order terms can be eliminated by orbital and coordinate transformations using the action \(\rho\) given in \eqref{eq:RHOOP}. Thus, we assume that we have removed all terms of \(v_p^{[1]}\) for all \(p = 1, \ldots, n-1\). Now, we apply the orbital normal form for grade \(n = l + 2m\).

For the given grade, we have two types of terms: \(x_1^l \Delta^m\) with \(m > 0\) and
\(m = 0\). We remove these terms in the following order:

\begin{align*}
	x_1^{p-2m}\Delta^{m} &\prec x_1^{p-2m+2}\Delta^{m-1} \prec \cdots \prec x_1^p,
\end{align*}

in fact, we start to remove the terms with \(\Delta\) highest power in one level and the term with \(m = 0\) in the next level.
 
\begin{lemma}\label{lem:o31}
There  are two class of transformation \((\mathfrak{r},\mathfrak{t})\) generated by:
\begin{align*}
	\mathsf{o}^l_m &= -(2l+3) a^l_m {x_{2}}  x_1^l\Delta^{m-1},\\
	\mathsf{c}^l_m &= 2a^l_m{x_{2}}x_1^{l} \Delta^{m-1} x_1\frac{\partial}{\partial {x_{1}}},
\end{align*}
that eliminates terms  in type \(x_1^{l}\Delta^{m} \frac{\partial}{\partial x_1}\)  where \(m>0\) from \(\dot{x}_1\) of  \eqref{eq:3dl1b}, where  $a^l_m$  is the coefficient of  $x_1^l\Delta^{m}\frac{\partial}{\partial {x_{1}}}$.
\end{lemma}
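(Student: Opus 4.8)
The plan is to read Lemma \ref{lem:o31} as the solvability of one homological step in the inductive Step (3): given the grade-$(l+2m-1)$ monomial $a^l_m\,x_1^l\Delta^m\frac{\partial}{\partial x_1}$ in $\dot x_1$ of \eqref{eq:3dl1b}, I want to produce a generator $(\mathsf{o}^l_m,\mathsf{c}^l_m)\in\mathfrak{r}^1\rtimes\mathfrak{t}^1$ whose $\rho$-action on the linear part $\mathsf{N}=x_2\frac{\partial}{\partial x_1}+x_3\frac{\partial}{\partial x_2}$ reproduces a nonzero multiple of that monomial modulo strictly higher grade. Concretely, using $\rho(r,X)Y=r\star Y+[X,Y]$ from \eqref{eq:RHOOP}, it suffices to show
\[
\rho(\mathsf{o}^l_m,\mathsf{c}^l_m)\,\mathsf{N}= c\,a^l_m\,x_1^l\Delta^m\tfrac{\partial}{\partial x_1}\pmod{\text{grade}>l+2m-1},\qquad c\neq0,
\]
so that a single term of $\exp\rho(\mathsf{o}^l_m,\mathsf{c}^l_m)$ cancels the target while, by the min-tropical filtration, leaving the already-normalized lower-grade terms untouched (exactly as in Lemma \ref{lem:otrans}).

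First I would split $\rho(\mathsf{o}^l_m,\mathsf{c}^l_m)\mathsf{N}$ into its orbital piece $\mathsf{o}^l_m\star\mathsf{N}$ and its commutator piece $[\mathsf{c}^l_m,\mathsf{N}]$. Since $\mathsf{o}^l_m\in\mathfrak{r}^1$ multiplies only the first component and the first component of $\mathsf{N}$ is $x_2$, the orbital piece is $\mathsf{o}^l_m\star\mathsf{N}=\mathsf{o}^l_m\,x_2\frac{\partial}{\partial x_1}$, a scalar multiple of $x_1^l x_2^2\Delta^{m-1}\frac{\partial}{\partial x_1}$. For the commutator I would write $\mathsf{c}^l_m=g\frac{\partial}{\partial x_1}$ with $g=2a^l_m x_1^{l+1}x_2\Delta^{m-1}$ and use $[g\frac{\partial}{\partial x_1},\mathsf{N}]=-(\mathsf{N}g)\frac{\partial}{\partial x_1}$, which is purely a $\frac{\partial}{\partial x_1}$-field (here $p=1$ is the top component, so there is no $\frac{\partial}{\partial x_0}$ term to discard). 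Expanding $\mathsf{N}g=x_2\partial_{x_1}g+x_3\partial_{x_2}g$ by Leibniz and then eliminating the resulting $x_3$'s through the transvectant relation $x_1x_3=\tfrac12(\Delta+x_2^2)$ splits the commutator into the genuine target $x_1^l\Delta^m\frac{\partial}{\partial x_1}$ and a spurious $x_1^l x_2^2\Delta^{m-1}\frac{\partial}{\partial x_1}$; I would also note that the two $x_3$-terms arising from $\partial_{x_1}(\Delta^{m-1})$ and $\partial_{x_2}(\Delta^{m-1})$ cancel, so only the powers of $\Delta$ (not $\Delta^{m-2}$) survive.

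The decisive point is then the coefficient matching: the magnitude $2l+3$ (and the relative sign) of the orbital generator $\mathsf{o}^l_m$ is dictated precisely by the requirement that $\mathsf{o}^l_m\star\mathsf{N}$ annihilate the spurious $x_1^l x_2^2\Delta^{m-1}$ term produced by $[\mathsf{c}^l_m,\mathsf{N}]$, while the coordinate coefficient $2$ in $\mathsf{c}^l_m$ normalizes the surviving $\Delta^m$ coefficient to be nonzero. Once this cancellation is recorded, $\rho(\mathsf{o}^l_m,\mathsf{c}^l_m)\mathsf{N}$ equals a nonzero multiple of $a^l_m x_1^l\Delta^m\frac{\partial}{\partial x_1}$ plus terms of grade exceeding $l+2m-1$ (the higher-order contributions of $\exp\rho$), and the target monomial is eliminated.

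I expect the main obstacle to be exactly this bookkeeping of the cancellation: one must track every $x_3$ generated when $\mathsf{N}$ differentiates both $x_1^{l+1}$ and $\Delta^{m-1}$, substitute $x_1x_3=\tfrac12(\Delta+x_2^2)$ consistently, and verify that the coefficient of $x_1^l x_2^2\Delta^{m-1}$ vanishes \emph{identically} in $l$ and $m$ while the coefficient of $x_1^l\Delta^m$ remains invertible. A secondary check is that the prescribed ordering $x_1^{p-2m}\Delta^m\prec x_1^{p-2m+2}\Delta^{m-1}\prec\cdots\prec x_1^p$ prevents this elimination from regenerating terms already in normal form; this follows because each generator carries grade $l+2m-1$, so any new monomial it creates (including the companion $\frac{\partial}{\partial x_2}$-contributions that matter at the next filtration level) has strictly higher grade and is therefore treated only at a later induction step.
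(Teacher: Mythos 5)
Your proposal is correct and is essentially the paper's own argument: an undetermined-coefficient ansatz for \((\mathsf{o}^l_m,\mathsf{c}^l_m)\), the computation of \(\rho(\mathsf{o}^l_m,\mathsf{c}^l_m)\mathsf{N}=\mathsf{o}^l_m x_2\frac{\partial}{\partial x_1}+[\mathsf{c}^l_m,\mathsf{N}]\) using \([g\frac{\partial}{\partial x_1},\mathsf{N}]=-(\mathsf{N}g)\frac{\partial}{\partial x_1}\) and \(\mathsf{N}\Delta=0\), and then matching coefficients so that the spurious quadratic term cancels while a nonzero multiple of \(x_1^l\Delta^m\frac{\partial}{\partial x_1}\) survives; the only cosmetic difference is that you organize the bookkeeping by eliminating \(x_3\) via \(x_1x_3=\tfrac12(\Delta+x_2^2)\), whereas the paper eliminates \(x_2^2\) via \(x_2^2=2x_1x_3-\Delta\) and cancels the \(x_3x_1^{l+1}\Delta^{m-1}\) term instead. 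Your framing that the relative sign of the two generators is \emph{dictated} by this cancellation is also the right reading, since the signs printed in the lemma statement are mutually inconsistent (they would give \(2\alpha^l_m-(2l+3)\beta^l_m\neq0\)), and the paper's own proof in fact solves \(\alpha^l_m=\tfrac{2l+3}{2}\beta^l_m\) with \(\beta^l_m=-2a^l_m\).
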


\begin{proof}

Define
\begin{align*}
	\mathsf{o}^l_m &= \alpha^l_m {x_{2}} x_1^l\Delta^{m-1},\quad m > 0, l\geq 0\\
	\mathsf{c}^l_m &= \beta^l_m {x_{2}}x_1^{l} \Delta^{m-1} x_1\frac{\partial}{\partial {x_{1}}},
\end{align*}
where \(\alpha^l_m\) and \(\beta^l_m\) are coefficients that need to be determined. By applying the \(\rho\) operator \eqref{eq:RHOOP}, we find the following results.

\ba
\nonumber
	\rho(\mathsf{c}^l_m,\mathsf{o}^l_m){x_{2}}\frac{\partial}{\partial {x_{1}}}&=&[\mathsf{c}^l_m,{x_{2}}\frac{\partial}{\partial {x_{1}}}]+ \mathsf{o}^l_m {x_{2}}\frac{\partial}{\partial {x_{1}}}
 \\\nonumber&=&
 \alpha^l_m\left(2 {{x_{3}}} x_1^{l+1} \Delta^{m-1}- x_1^{l}\Delta^{m}\right)\frac{\partial}{\partial x_{1}}
 -\left(2 l +3\right) \beta^l_m {{x_{3}}} x_1^{l+1}\Delta^{m-1} \frac{\partial}{\partial x_{1}}
 \\\label{eq:r3}
	&=&  \left(2 \alpha^l_m -(2 l+3)\beta^l_m  \right){x_{3}} x_1^{l+1} \Delta^{m-1}\frac{\partial}{\partial x_{1}}
 - \left(\alpha^l_m-(l+1)\beta^l_m \right)x_1^{l}\Delta^{m}\frac{\partial}{\partial x_{1}}.
\ea
Define:
\bas
	\alpha^l_m &=&\frac{2 l+3}{2}\beta^l_m, \\
	\beta^l_m &=& -2 a^l_m.
\eas
Then we find 
\bas
[\mathsf{c}^l_m,{x_{2}}\frac{\partial}{\partial {x_{1}}}]+ \mathsf{o}^l_m {x_{2}}\frac{\partial}{\partial {x_{1}}}=a^l_m{ x_1^{l}\Delta^{m}}\frac{\partial}{\partial x_{1}}.
\eas
Therefore, \( x_1^{l} \Delta^{m} \frac{\partial}{\partial x_{1}} \in {\rm{Im}}(\rho)\)  for \(l\geq 0, m>0\).
\end{proof}

Assume that \(a_2=a^2_0\neq 0\), therefore \(x_1^{2}\) is the first term which is non-zero in the unique triangular \(\Sl\)  normal form given by Theorem \ref{thm:tri3}. Then we find the following proposition follows from Lemma \ref{lem:o31}.

\item \textbf{Step (4):}  In the last step we aim to eliminate all terms from \eqref{eq:3dl1b}, except \(({x_{2}}+a_2 x_1^{2})\frac{\partial}{\partial {x_{1}}},\)
for more details related to the higher-order normal form or unique normal form, see \cite{baider1991unique,baider1992further,gazor2013normal,gazor2019vector,gazor2013volume}.
 In the following, we update our grade so that the linear part and \(x_1^{2}\) have the same grade.
  Define \(\delta^{(2)}(x_i)=s_i, \delta^{(2)}(\frac{\partial}{\partial x_i})=-s_i\) for all \(i=1,2,3.\)
 Therefore, $$\delta^{(2)}({x_{2}}\frac{\partial}{\partial {x_{1}}})=s_2-s_1, \,\,\,\delta^{(2)}(x_1^{2}\frac{\partial}{\partial {x_{1}}})=s_1,\,\,  \mbox{and}\,\,\,
 \delta^{(2)}({x_{3}}\frac{\partial}{\partial {x_{2}}})=s_3-s_2.$$
 Since we would like to have the same grade for terms of  linear part (\(\N\)) and 
 \({x_{2}}\frac{\partial}{\partial {x_{1}}}\) and 
 \(x_1^{2}\frac{\partial}{\partial {x_{1}}}.\) 
 We have to find \(s_i\) such that the following hold true

\bas
\delta^{(2)}({x_{2}}\frac{\partial}{\partial {x_{1}}})&=&\delta^{(2)}({x_{3}}\frac{\partial}{\partial {x_{2}}}),\,\,\,\ \mbox{two terms of}\,\,\, \N,
\\
\delta^{(2)}({x_{2}}\frac{\partial}{\partial {x_{1}}})&=&\delta^{(2)}(x_1^{2}\frac{\partial}{\partial {x_{1}}}),\,\,\, \mbox{leading terms of first component,}\eas
above equations give rise to
\bas
s_3-s_2&=&s_2-s_1,
\\s_2&=&2s_1,
\eas
we find $s_3=3s_1.$ Now, set \(s_1=1\) we get \(s_2=2\) and \(s_3=3.\) Therefore, 
\bas
	\delta^{(2)}(x_1^{i}x_2^jx_3^k\frac{\partial}{\partial x_l})=&i+2j+3k-s_l\,\,\mbox{for all},\,\, l=1,2,3.
\eas

Now, define 
	\ba\label{eq:l1}
	\mathcal{L}_{1}&:=&\mathsf{N}+a_2 x_1^{2}\frac{\partial}{\partial {x_{1}}},
	\ea
	with $\delta^{(2)}(\mathcal{L}_{1})=1$. And we have that \(\delta^{(2)}(x_1^3\frac{\partial}{\partial {x_{1}}})=2>1\)
 and \(\delta^{(2)}(\Delta\frac{\partial}{\partial {x_{1}}})=3>1\).

\end{itemize}
\begin{lemma}
Assume \(a_2 x_1^2 \frac{\partial}{\partial {x_{1}}}\) to be the lowest
{\em(in  \(\delta^{(2)}\)}-grade{\em )} non zero terms in the unique inner normal form given by Theorem \eqref{thm:tri3}. The outer normal form of the first component of \eqref{eq:3dd}  is 
    \bas
    {\dot x}_1={x_{2}}+a_2 x_1^{2}.
    \eas
\end{lemma}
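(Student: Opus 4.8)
The plan is to begin from the system \eqref{eq:3dl1b} after Lemma \ref{lem:o31} has been applied, so that every term carrying a positive power of $\Delta$ has already been cleared from the first component. At that stage the inner normal form of Lemma \ref{thm:tri3} reduces to $\dot{x}_1 = {x_{2}} + a_2 x_1^2 + \sum_{k\geq 3} a_k x_1^k$, a pure power series in ${x_{1}}$. Working with the grading $\delta^{(2)}$ for which $s_1=1,\,s_2=2,\,s_3=3$ and the leading part $\mathcal{L}_1 = \mathsf{N}+a_2x_1^2\frac{\partial}{\partial{x_{1}}}$ of grade $1$ fixed in \eqref{eq:l1}, I would argue by induction on the grade $n\geq 2$ that the unique surviving first-component term of that grade, namely $a_{n+1}x_1^{n+1}\frac{\partial}{\partial{x_{1}}}$ (observe $\delta^{(2)}(x_1^{n+1}\frac{\partial}{\partial{x_{1}}})=n$), lies in $\mathrm{im}\,\rho$ and can therefore be annihilated.

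The heart of the argument is a single $\rho$-computation, in the spirit of Step (4) of the $2$D case. For the grade-$n$ step I would take a generator $(\mathsf{o},\mathsf{c})\in\mathfrak{r}^1\rtimes\mathfrak{t}^1_1$ with
\[
\mathsf{o}=\alpha\, x_1^{n-1},\qquad \mathsf{c}=\beta\, x_1^{n}\frac{\partial}{\partial{x_{1}}},
\]
both of grade $n-1$, and evaluate $\rho(\mathsf{o},\mathsf{c})\mathcal{L}_1 = \mathsf{o}\star\mathcal{L}_1 + [\mathsf{c},\mathcal{L}_1]$. Using $[\mathsf{c},{x_{3}}\frac{\partial}{\partial{x_{2}}}]=0$ and the structure constant $[x_1^{n}\frac{\partial}{\partial{x_{1}}},x_1^2\frac{\partial}{\partial{x_{1}}}]=(2-n)x_1^{n+1}\frac{\partial}{\partial{x_{1}}}$, this collapses to
\[
\rho(\mathsf{o},\mathsf{c})\mathcal{L}_1=(\alpha-n\beta)\,x_1^{n-1}{x_{2}}\frac{\partial}{\partial{x_{1}}}+\bigl(\alpha+(2-n)\beta\bigr)a_2\,x_1^{n+1}\frac{\partial}{\partial{x_{1}}}.
\]
Imposing $\alpha=n\beta$ kills the off-form ``garbage'' term $x_1^{n-1}{x_{2}}\frac{\partial}{\partial{x_{1}}}$, after which the coefficient of $x_1^{n+1}\frac{\partial}{\partial{x_{1}}}$ becomes $2a_2\beta$; choosing $\beta=-a_{n+1}/(2a_2)$ removes the target term. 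Since the generators have grade $n-1$, the transformation $\exp\rho(\mathsf{o},\mathsf{c})$ leaves all lower grades intact, and the terms it produces at grades $>n$ are handled at later induction steps, so the induction closes and, as a formal series identity, the first component reduces to exactly ${x_{2}}+a_2x_1^2$.

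The step I expect to be decisive — and the reason the outer rather than the inner normal form is required — is the solvability of this two-by-two linear system for $(\alpha,\beta)$, which hinges on the hypothesis that $a_2$ is invertible. With coordinate transformations alone one would have only $[\mathsf{c},\mathcal{L}_1]$, whose $x_1^{n+1}$-coefficient is $(2-n)a_2\beta$; this vanishes precisely at $n=2$, so the cubic term $x_1^3\frac{\partial}{\partial{x_{1}}}$ could never be removed inside the inner theory. The orbital contribution $\mathsf{o}\star\mathcal{L}_1=\alpha x_1^{n-1}({x_{2}}+a_2x_1^2)\frac{\partial}{\partial{x_{1}}}$ supplies exactly the missing degree of freedom, and at the same time forces the auxiliary condition $\alpha=n\beta$ needed to avoid reintroducing terms outside the $\bar f_1({x_{1}},\Delta)$ shape guaranteed by Lemma \ref{thm:tri3}. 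The final routine check securing the induction is that no positive power of $\Delta$ is regenerated: both $\mathsf{o}\star\mathcal{L}_1$ and $[\mathsf{c},\mathcal{L}_1]$ produce only powers of ${x_{1}}$ together with the single monomial $x_1^{n-1}{x_{2}}$, and none of these involves ${x_{3}}$ or $x_2^2$, so $\Delta=2{x_{1}}{x_{3}}-x_2^2$ cannot reappear.
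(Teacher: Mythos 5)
Your proposal is correct and is essentially the paper's own proof: after invoking Lemma \ref{lem:o31} to clear the $\Delta$-terms, the paper uses exactly your generators (in its notation $\mathsf{o}^l_0=\alpha^l_0x_1^{l-2}$, $\mathsf{c}^l_0=\beta^l_0x_1^{l-1}\frac{\partial}{\partial x_1}$ with $l=n+1$), performs the same $\rho$-computation on $\mathcal{L}_1=\N+a_2x_1^2\frac{\partial}{\partial x_1}$, and solves the same two-by-two linear system whose solvability rests on the invertibility of $a_2$. If anything, your bracket computation is the more careful one: the paper's displayed formula omits the contribution $(2-n)a_2\beta\,x_1^{n+1}\frac{\partial}{\partial x_1}$ coming from $[\mathsf{c},a_2x_1^2\frac{\partial}{\partial x_1}]$ (i.e.\ $(3-l)a_2\beta^l_0\,x_1^{l}\frac{\partial}{\partial x_1}$ in its indexing), which only alters the normalizing constants and not the conclusion.
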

\begin{proof}
Consider these transformation generators
\bas
 \mathsf{o}^l_0&:=&\alpha^l_0x_1^{l-2},\quad l>2,
 \\
\mathsf{c}^l_0&:=& \beta^l_0 x_1^{l-1} \frac{\partial}{\partial {x_{1}}}.
 \eas
 By carrying these transformations through the \(\rho\) action we get 
 \bas
\rho(\mathsf{o}^l_0,\mathsf{c}^l_0)\mathcal{L}_{1}&=&
 \mathsf{o}^l_0 \mathcal{L}_{1} +[\mathsf{c}^l_0,\mathcal{L}_{1}]
   \\
   &=&
 \left(-( l -1)\beta^l_0   +\alpha^l_0 \right) {{x_{2}}}  x_1^{l -2} \frac{\partial}{\partial {x_{1}}}+\alpha^l_0 a_2 \, x_1^{l}  \frac{\partial}{\partial {x_{1}}}.
 \eas
By choosing 
 \bas
 \beta^l_0&=&\frac{\alpha^l_0}{l -1},\quad
 \alpha^l_0 = \frac{a^l_0}{a_2},
 \eas
we get  \(
  \rho((\mathsf{o}^l,\mathsf{c}^l))\mathcal{L}_{1}=
a^l_0 x_1^l\frac{\partial}{\partial {x_{1}}}\).  
 Hence  \({ x_1^{l}}\frac{\partial}{\partial {x_{1}}} \in {\rm{Im}}(\rho)\) for all \(l>2.\)
 By applying the normalization repeatedly, we can remove the terms in the triangular vector field to any grade we want,
 leaving us with the formal normal form as given in this Lemma.
\end{proof}
We have now obtained the promised (in Theorem \ref{thm:final3DNF}) normal form:
\begin{align}\label{eq:3dl1c}
	\begin{cases} 
		\dot {x}_{1} &= {x_{2}} + a_2 x_1^2, \\
		\dot {x}_{2} &= {x_{3}} + b_2x_{2}^2, \\
		\dot {x}_{3} &= c_2x_3^2.
	\end{cases}
\end{align}

\section*{Acknowledgement}
We sincerely thank Bob Rink for initiating our discussions on this problem and for the extensive conversations that followed. We are also grateful to Jan Sanders for his invaluable expertise and insightful feedback, which significantly enhanced the quality of our work. Additionally, we thank Nicolas Augier for providing helpful references for this paper.
	\bibliographystyle{plain}

 \bibliography{NET.bib}
\end{document}